\theoremstyle{plain}
\theoremstyle{plain}
\newtheorem{thm}{Theorem}
\newtheorem{prop}{Proposition}[section]
\newtheorem{lem}{Lemma}[section]
\newtheorem{nota}{Notation}[section]
\newtheorem{cor}{Corollary}[section]
\newtheorem{pte}{Property}[section]
\theoremstyle{definition}
\newtheorem{Def}{Definition}[section]
\theoremstyle{remark}
\newtheorem{rem}{Remark}[section]
\newtheoremstyle{restate}
{\topsep}
{\topsep}
{\itshape}
{}
{\bfseries}
{.}
{ }
{\thmname{#1}\thmnote{ #3}}
\theoremstyle{restate}
\newtheorem{thm*}{Theorem}
\newtheorem{prop*}{Proposition}
\newtheorem{lem*}{Lemma}
\newtheorem{add*}{Addendum}
\newtheorem{cor*}{Corollary}
\newtheorem{pte*}{Property}
\theoremstyle{definition}
\newtheorem{Def*}{Definition}
\newtheorem{exm*}{Example}
\theoremstyle{remark}
\newtheorem{rem*}{Remark}
\theoremstyle{definition}
\theoremstyle{remark}
\newcommand{\ov}{\overline}
\newcommand{\Inf}{\mathop{\rm Inf\,}\limits}
\newcommand{\Max}{\mathop{\rm Max\,}\limits}
\newcommand{\Min}{\mathop{\rm Min\,}\limits}
\newcommand{\Dp}[2]{\frac{\partial #1}{\partial #2}}
\newcommand{\diam}{{\rm diam\,}}
\newcommand{\Argsh}{{\rm Argsh\,}}
\newcommand{\ha}{\widehat}
\newcommand{\ti}{\tilde}
\newcommand{\wti}{\widetilde}
\newcommand{\N}{\mathbb N}
\newcommand{\Z}{\mathbb Z}
\newcommand{\R}{\mathbb R}
\newcommand{\A}{\mathbb A}
\newcommand{\T}{\mathbb T}
\newcommand{\Cal}{\mathcal}
\newcommand{\mcr}{\mathscr}
\newcommand{\abs}[1]{\left\vert #1\right\vert}
\newcommand{\norm}[1]{\Vert#1\Vert}
\newcommand{\demi}{\frac{1}{2}}
\newcommand{\kp}{\frac{k}{p}}
\newcommand{\kkp}{\frac{k+1}{p}}
\newcommand{\Log}{{\rm Log\,}}
\newcommand{\setm}{\setminus}
\DeclareMathOperator{\Card}{Card}
\newcommand{\rk}{{\rm rank\,}}
\newcommand{\Aa}{\Cal{A}}
\newcommand{\Ss}{\Cal S}
\newcommand{\Tt}{\Cal T}
\newcommand{\Dd}{\Cal D}
\newcommand{\Cc}{\Cal C}
\newcommand{\Hh}{\Cal H}
\newcommand{\Rr}{\Cal R}
\newcommand{\Kk}{\Cal K}
\newcommand{\jA}{\mcr A}
\newcommand{\CC}{\mcr C}
\newcommand{\TT}{\mcr T}
\newcommand{\EE}{\mcr E}
\newcommand{\BB}{\mcr B}
\newcommand{\DD}{\mcr D}
\newcommand{\LL}{\mcr L}
\newcommand{\PP}{\mcr P}
\newcommand{\II}{\mcr I}
\newcommand{\JJ}{\mcr J}
\newcommand{\VV}{\mcr V}
\newcommand{\OO}{\mcr O}
\newcommand{\KK}{\mcr K}
\newcommand{\UU}{\mcr U}
\newcommand{\jS}{\mcr S}
\newcommand{\RR}{\mcr R}
\newcommand{\al}{\alpha}
\newcommand{\be}{\beta}
\newcommand{\ga}{\gamma}
\newcommand{\sig}{\sigma}
\newcommand{\eps}{\varepsilon}
\newcommand{\Ga}{\Gamma}
\renewcommand{\th}{\theta}
\newcommand{\vp}{\varphi}
\newcommand{\Sig}{\Sigma}
\newcommand{\De}{\Delta}
\newcommand{\om}{\omega}
\newcommand{\Om}{\Omega}
\newcommand{\lam}{\lambda}
\newcommand{\de}{\delta}
\newcommand{\Th}{\Theta}
\newcommand{\vth}{\vartheta}
\newcommand{\ze}{\zeta}
\newcommand{\ka}{\kappa}
\newcommand{\rit}{\rightarrow}
\newcommand{\ma}{\mapsto}
\newcommand{\inv}{^{-1}}
\newcommand{\lio}{_{e,\rho}}
\newcommand{\ke}{_{k,e}}
\newcommand{\je}{_{\JJ,e}}
\newcommand{\ths}{\th^*}
\newcommand{\ws}{w^*}
\newcommand{\bu}{\ov{u}}
\newcommand{\us}{u^*}
\newcommand{\hto}{{\rm{h_{top}}}}
\newcommand{\hp}{{\rm{h_{pol}}}}
\newcommand{\hw}{{\rm{h_{pol}^*}}}
 \newcommand{\8}{\scalebox{1.3}{$\infty$}}
\newcounter{paraga}[subsubsection]
\renewcommand{\theparaga}{{\bf\arabic{paraga}.}}
\newcommand{\paraga}{\medskip \addtocounter{paraga}{1} 
\noindent{\theparaga\ } }
\begin{document}
\selectlanguage{english}

\author{Clémence Labrousse, Jean-Pierre Marco}
\title[Polynomial entropies for Bott systems]{Polynomial entropies for Bott nondegenerate Hamiltonian systems.}
\thanks{Institut de Mathématiques de Jussieu, UMR 7586, {\em Analyse algébrique},
175 rue du Chevaleret, 75013 Paris.
email: labrousse@math.jussieu.fr}

\date{}

\maketitle

\begin{abstract} 
In this paper, we study the entropy of a Hamiltonian flow in restriction to an enregy level where it admits a first integral which is nondegenerate in the Bott sense. It is easy to see that for such a flow, the topological entropy vanishes. We focus on the \textit{polynomial} and the \textit{weak polynomial entropies} $\hp$ and $\hp^*$. We prove that, under conditions on the critical level of the Bott first integral and dynamical conditions on the hamiltonian function $H$,  $\hw\in \{0,1\}$ and $\hp\in \{0,1,2\}$.
\end{abstract}

\tableofcontents
\newpage

\section{Introduction}

A Hamiltonian function $H$ on a symplectic manifold $M$ of dimension $2\ell$ is \textit{integrable in the Liouville sense} when there exists a smooth map $F = (f_i )_{1\leq i\leq\ell}$ from $M$ to $\R^{\ell}$, which is a submersion on an open dense domain $O$ and whose components $f_i$ are first integrals in involution of the Hamiltonian vector field $X^H$. The fundamental example is that of systems in ``action-angle'' form on the annulus  $\A^{\ell}=\T^{\ell}\times\R^{\ell}$, that is Hamiltonian systems of the form $H(\theta,r)=h(r)$. The structure of these systems is well known: $M$ is foliated by Lagrangian tori $\T^\ell\times\{r_0\}$ on which the flow is linear with frequency $\om(r_0):=dh(r_0)$.

In the general case, the domain $O$ is a countable union of the ``action-angle domain'' where the system is symplectically conjugate to a system in action-angle form. It is well-know that the topological entropy of system in action-angle form vanishes, so the entropy of an integrable system is ``localized'' on the singular set of the map $F$. 

It has been proved by Paternain (\cite{Pat-dim4}) that if $M$ is of dimension $4$ and if the Hamiltonian vector fields possesses a first integral $f$ (independent with $H$) such that, on a regular level $\EE$ of $H$, the critical points of $f$ forms submanifold, then the topological entropy of $\phi_H$ in restriction to $\EE$ vanishes.
This include the cases when $f$ is a \emph{nondegenerate in the Bott sense} (see section 2). 

It turns out that when the topological entropy vanishes the complexity of dynamical system can be described by {\em two} non equivalent conjugacy invariants\,-\,the strong polynomial entropy $\hp$ and the weak polynomial entropy $\hw$\,-\, which depict the polynomial growth rates of the number $G_f(\varepsilon)$ (see section 3) and which satisfy $\hw\leq \hp$.

In this paper, we compute the polynomial entropies for Hamiltonian systems that are nondegenerate in the Bott sense with an additionnal dynamical condition of coherence (see section 2). In particular, we prove that, for these systems, both $\hp$ and $\hw$ take only integer values.
In section 4, we prove that $\hw(\phi)\in \{0,1\}$ and in Section 5, we prove that $\hp(\phi)\in \{0,1,2\}$.

\section{Dynamically coherent systems}

\subsection{Definition and description} 
Consider a symplectic manifold $(M,\om)$ with dimension $4$ and a smooth Hamiltonian function $H:M\rit \R$, with its associated vector field $X^H$ and its associated Hamiltonian flow $\phi_H$.
We fix a (connected component of ) a compact nondegenerate energy level $\EE$ of $H$. It is a compact connected submanifold of dimension $3$.  In  what follows, we will often consider the restrictions of the vector field and the flow to $\EE$. They are still denoted by $X^H$  and $(\phi_H)$. Since $\EE$ is compact, $\phi_H$ is complete.

A first integral $F:M\rit \R$ of the vector field $X^H$ is said to be \emph{nondegenerate in the Bott sense} on $\EE$ if the critical points of $f:=F_{|\EE}$ form nondegenerate sstrict submanifolds of $\EE$, that is the Hessian $\partial^2f$ of $f$ is non-degenerate on normal subspaces to these submanifolds. 

In the following we call  \emph{nondegenerate Bott system}  a triple $(\EE,\phi_H,f)$ where $\EE$ is a regular level energy of a Hamiltonian $H$ on $M$, and $f:=F_{|\EE}$ ther restriction to $\EE$ of a first integral $F$ of $H$ nondegenerate in the Bott sense on $\EE$.
One proves that the critical submanifolds may only be circles, Lagrangian tori or Klein bottle (see \cite{Mar-93}, \cite{F-88}).

The  critical circles for $f$ are periodic orbits of the flow $\phi_H$. Their \textit{index} is the number of negative eigenvalues of the restriction of $\partial^2f$ to a supplementary plane to $\R X^H$.

Consider a critical circle $\CC$ for $f$ such that $f(\CC)=c$. Let us summarize the two possibilities that occur (see \cite{Mar-09} for more details).
If $\CC$ has index $0$ or $2$, there exists a neighborhood $U$ of $\CC$ such that $f\inv\{c\}\cap U=\CC$ and such that the levels $f\inv\{c'\}$ for $c'$ close to $c$ are tori whose intersection with a normal plane $\Sig$ to $\CC$ are ``circles with common center $\Sig\cap \CC$''.
If $\CC$ has index $1$, there exists a neighborhood $U$ of $\CC$ such that $f\inv\{c\}\cap U$ is a stratified submanifold homeomorphic to  a ``fiber bundle'' with basis a circle and with fiber a ``cross''. 
The whole connected component $\PP$ of $f\inv\{c\}$ containing $\CC$ is a finite union of critical circles and cylinders $\T\times \R$ whose boundary is either made of one or two critical circles. 
All the critical circles contained in $\PP$ are homotopic and have index $1$. Such a stratified submanifold is called a \emph{polycycle}.
Fomenko assumes in \cite{F-88} that a polycycle contains only one critical circle, then we say that $\PP$ is a $\8$-level. A $\8$-level may be orientable or nonorientable.

Let us introduce the following terminology.

\begin{Def} Let $(\EE,\phi_H,f)$ be a nondegenerate Bott system.
The system $(\EE,\phi_H,f)$ is said \emph{dynamically coherent}  if the critical circles $\CC$ are either elliptic periodic orbits or hyperbolic periodic orbits.
\end{Def}

\begin{rem}
 If $(\EE,\phi_H,f)$ is dynamically coherent, the elliptic orbit are the circles with index $0$ or $2$ and the hyperbolic orbits are the circles with index $1$. The leaves $\T\times\R$ of the $\8$-level that contains a hyperbolic orbit are the invariant submanifolds of this orbit.
\end{rem}

\subsection{The dynamics in the neighborhood of singularities.}
We recall that a Hamiltonian system on the annulus $\T^n\times\R^n$ (with the canonical symplectic form $d\th\wedge dr$) is in \textit{action-angle form} if the Hamiltonian function $H$ only depends on the \textit{action variable} $r$. Therefore the Hamiltonian flow has the following form:
\[
\phi_t(\th,r)=(\th+t\om(r)\, [\Z^n],r),
\]
where $\om: r\ma \nabla h(r)$.

Fix a regular value  $a\in\R^2$ of  the moment map $F:=(H,f):M\rit \R^2$, and consider a compact connected component $\Tt$ of $F\inv(\{a\})$. 
The submanifold $\Tt$ is a torus and there exists a neighborhood $\ha{\Aa}$ of $\Tt$ in $M$ such that $F$ is a trivial fibration over $\ha{\Aa}$.
Then the Arnol'd-Liouville theorem shows that there exists a symplectic diffeomorphism $\Psi:\ha{\jA}\rit \T^2\times O$, $q\ma(\al_1,\al_2,I_1,I_2)$ such that  $I_1,I_2$ only depends on the value of $F$, so the Hamiltonian system associated with $H\circ\Psi\inv$ is in action-angle form.
As a consequence $\Psi\circ \phi^t_H=\psi^t\circ\Psi$ where $(\psi^t)$ is the Hamiltonian  flow on $\T^2\times O$ associated with $H\circ\Psi\inv$.
Such a domain $\ha{\Aa}$ is called a \textit{action-angle domain}.

\paraga {\it{Critical tori and Klein bottles.}}
We show that the dynamics in the neighborhood of a critical torus is the same as  the dynamics in the regular set of $f$. Moreover, up to a $2$-sheeted covering, the dynamics in the neighborhood of a Klein bottle is the same as the one in a critical torus.

\begin{prop}\label{aaTorecritique} 
Let $(\EE,\phi_H,f)$ be a nondegenerate Bott system. Let $\Tt\subset\EE$ be a critical torus of $f$. There exists a neighborhood $\ha{\Aa}$ of $\Tt$ in $M$ that is a action-angle domain.
\end{prop}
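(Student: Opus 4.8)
The plan is to exhibit, near $\Tt$, a smooth foliation by invariant Lagrangian tori and then to run the Arnol'd--Liouville construction on it exactly as in the regular case recalled above; the one delicate point is that this foliation be smooth \emph{across} the critical torus $\Tt$.

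\textbf{Step 1 (elementary facts).} Since $f$ and $H$ are first integrals of $X^H$, the critical set $\Crit(f_{|\EE})$ is $\phi_H$-invariant, and so is each of its connected components; as $\phi^0_H=\Id$ and the flow is continuous, the connected submanifold $\Tt$ is invariant. Because $\EE$ is a regular energy level, $X^H$ vanishes nowhere on $\EE$, so $\phi_H$ restricts to a fixed-point free flow on the $2$-torus $\Tt$.

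\textbf{Step 2 (local model).} Apply the Morse--Bott lemma to the Bott-nondegenerate function $f_{|\EE}$ along $\Tt$: there is a tubular neighbourhood $U\cong\T^2\times(-\de,\de)$ of $\Tt$ in $\EE$, with transverse coordinate $s$, in which $f_{|U}=f(\Tt)+\eps s^2$, $\eps=\pm1$, and $\Tt=\{s=0\}$. Carry this out with the energy $E=H$ as a parameter: the levels $\EE_E=H\inv(E)$, $E$ near $E_0:=H(\Tt)$, form a smooth family of compact $3$-manifolds, and since $\Tt$ is a \emph{nondegenerate} critical torus of $f_{|\EE_{E_0}}$ it persists, by the implicit function theorem applied to the transverse differential of $f$, into a smooth family of nondegenerate critical tori $\Tt_E$. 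One thus gets a smooth diffeomorphism of a neighbourhood $W$ of $\Tt$ in $M$ onto $\T^2\times B$, where $B=(-\de,\de)_s\times(E_0-\eta,E_0+\eta)_E$ is a $2$-disc, under which $H=E$, $f=c(E)+\eps s^2$ and $\Tt=\T^2\times\{(0,E_0)\}$; compare \cite{Mar-09}. In this model the flow $\phi_H$ preserves each torus $\Tt_{s,E}:=\T^2\times\{(s,E)\}$, and these are exactly the connected components of the common levels $\{H=\text{const}\}\cap\{f=\text{const}\}$ meeting $W$. For $s\neq0$ one has $dH\wedge df=2\eps s\,dE\wedge ds\neq0$, so $\Tt_{s,E}$ is a regular Liouville torus of the moment map $(H,f)$, hence Lagrangian by the recalled Arnol'd--Liouville statement; by continuity of $\om$, $\Tt=\Tt_{0,E_0}$ is Lagrangian too. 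Therefore the projection $\pi\colon W=\T^2\times B\to B$ is a proper, everywhere regular Lagrangian torus fibration: the sole effect of $\Tt$ being a critical level of $f$ is that the moment map $(H,f)=(E,c(E)+\eps s^2)$ folds in the $s$-variable, while $\pi$ itself has no singular fibre.

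\textbf{Step 3 (Arnol'd--Liouville and conclusion).} Apply the Arnol'd--Liouville construction --- in the form recalled above, or in its version over a contractible base --- to $\pi\colon W\to B$. Pick a basis $\ga_1,\ga_2$ of the first homology of a fibre and a primitive $\lam$ of $\om$ near a Lagrangian section of $\pi$; the action functions $I_j=\tfrac1{2\pi}\oint_{\ga_j}\lam$ are then smooth on $B$ with $dI_1\wedge dI_2\neq0$ everywhere, in particular over $(0,E_0)$ since $\pi$ is regular there, and together with conjugate angles $\al_1,\al_2$ they furnish a symplectic diffeomorphism $\Psi\colon W\to\T^2\times O$, $O\subset\R^2$ open, with $\om=d\al_1\wedge dI_1+d\al_2\wedge dI_2$. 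Since $H=E$ is constant on the fibres of $\pi$, $H\circ\Psi\inv$ depends only on $(I_1,I_2)$ and hence is in action--angle form; thus $W=\ha{\Aa}$ is the required action-angle domain, which proves the proposition.

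\textbf{Main obstacle.} The crux is the smoothness claimed in Step 2 --- that the foliation by invariant Lagrangian tori extends \emph{smoothly} across $\Tt$, equivalently that the action integrals $\tfrac1{2\pi}\oint_{\ga_j}\lam$ carry no square-root or logarithmic singularity as the cycles $\ga_j$ collapse onto $\Tt$. This is exactly where Bott nondegeneracy is used: it provides the quadratic normal form $f_{|\EE}=f(\Tt)+\eps s^2$, so that $s$ --- rather than $f-f(\Tt)$, which folds --- is a bona fide smooth transverse coordinate and $\pi$ is regular across $\Tt$. This is in sharp contrast with the critical-circle and $\8$-level situations, where no such smooth extension exists.
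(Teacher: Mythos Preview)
Your overall strategy coincides with the paper's: exhibit near $\Tt$ a smooth Lagrangian torus fibration that is \emph{regular} across the critical level, and then run the standard Arnol'd--Liouville construction on it. Your Step~2 is the heart of the matter, and the device of replacing the folding moment map $(H,F)$ by the regular projection $(s,E)$ is exactly the right idea; the paper carries out the same reduction, though it phrases the conclusion as ``construct action variables on the two half--neighbourhoods $\ha D^{\pm}$ and glue them along $\cup_t\Tt_t$'', which your formulation makes more transparent.

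There is, however, one genuine gap. In Step~2 you assert that the nondegenerate critical torus $\Tt$ persists to nearby energies ``by the implicit function theorem applied to the transverse differential of $f$''. But Morse--Bott nondegeneracy alone does \emph{not} guarantee persistence of the critical submanifold under a one--parameter deformation: for instance $f_E(\th_1,\th_2,s)=s^2+E\,\th_1$ has $\{s=0\}$ as a nondegenerate Morse--Bott critical set at $E=0$, and no critical points at all for $E\neq0$. The implicit function theorem applied to $\partial_s F=0$ only produces a hypersurface $s=\sigma(\th,E)$ on which the \emph{transverse} derivative vanishes; you still need $\partial_{\th_i}F=0$ there, and this requires the integrable structure $\{H,F\}=0$, which you do not invoke. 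The paper closes this gap differently: it observes that $\Tt$ is a normally hyperbolic invariant manifold for the gradient flow $\nabla f$ on $\EE$ (the transverse Hessian being definite), pulls back this flow to nearby energy levels via the gradient flow of $H$, and applies the Hirsch--Pugh--Shub persistence theorem to obtain the smooth family $\Tt_E$. Once that family is in hand, your Step~3 and the paper's gluing argument are equivalent.
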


\begin{proof}
We first show hat there exists a neighborhood $U$ of $\Tt$ such that the foliation induced by $F$  is made by homotopic tori. Then we see that  the construction of the action-angle variables can be done as in the usual case by taking a family of  bases of the homology of each torus that depends smoothly on the tori (see Annex A, or \cite{Du-80}, or \cite{Au-01}).

\noindent $\bullet $ We begin by studying the foliation induced by $f$ on $\EE$. 
We endow $M$ with a Riemannian metric $g$ and we denote by $||\cdot||$ the norm associated with $g$. 
We can assume without loss of generality that $f(\Tt)=0$ and that $f$ has index $0$ on $\Tt$ (that is, the Hessian of $f$ in restriction to a transverse line to $\Tt$ is positive definite). 
Therefore, by the Morse-Bott theorem (see (\cite{BH-04}) for a recent proof), for any $q\in \Tt$, there exist a neighborhood $U_q$ of $q$ in $\EE$, a neighborhood $O_q\times I_q$ of $(0,0)\in \R^2\times\R$ and diffeomorphism $\Phi_q :U_q\rit O_q\times I_q, p\ma (\vp, \xi)$  such that $f\circ\Phi_q\inv(\vp,\xi)=\xi^2$.

Now, for $q\in \Tt$, one has the  decomposition $T_q\EE=T_q\Tt\oplus \R N$, where $N$ is the unit normal  vector to $\Tt$. Consider the vector bundle $\wti F$ over $\Tt$ with fiber $F_q:=N(q)$.
Since $\Tt$ is orientable, $\wti F$ is trivial.  Using the tubular neighborhood theorem, there exist a neighborhood $I$ of $0$ in $\R$  and a diffeomorphism $\Psi:U\rit \T^2\times I, q\ma (\th,x)$, that satisfies $\Psi(\Tt):=\{(\th,0)\,|\,\th\in\T^2\}$.

Fix $q\in \Tt$. We can assume that $U_q\subset U$.  The map 
\[
P:=\Psi\circ \Phi_q\inv:O_q\times I_q\rit \T^2\times I, (\vp,\xi)\ma (\th(\vp,\xi),x(\vp,\xi))
\]
 is a diffeomorphism on its image. Let $q'=(\vp,0)\in U_q\cap \Tt$.
The curve $\xi\ma (\th(\vp,\xi), x(\vp,\xi))$ is transverse to $\Tt$ at the point $q'$.
So $\Dp{x}{\xi}(\vp,0)\neq 0$ for all $(\vp,0)\in U_q\cap \Tt$. Assume that $\Dp{x}{\xi}> 0$ on $U_q\cap \Tt$.  
Then for $c>0$ small enough, $f\inv(\{c\})\cap U_q$ has two connected components $C^+$ and $C^-$ with $C^+\subset \{x>0\}$ and $C^-\subset \{x<0\}$. Both are graphs over $U_q\cap \Tt$.
Now $f\inv(\{c\})\cap \{x>0\}=f\inv(\{c\})\cap \{x\geq 0\}$ is closed. But its complementary $f\inv(\{c\})\cap \{x\leq 0\}$ is also closed so $f\inv(\{c\})$ is not connected. Therefore $f\inv(\{c\})\cap U$ has two connected components which are both graphs over $\Tt$.
So $U$ is foliated by homotopic tori that invariant under the Hamiltonian flow.
Fix $c_0>0$ and let us introduce the following domains:
\[
D_0^+:=\bigcup_{0\leq c\leq c_0}\{f\inv(c)\cap U\cap\{x\geq 0\}\}, \quad D_0^-:=\bigcup_{0\leq c\leq c_0}\{f\inv(c)\cap U\cap\{x\leq 0\}\}.
\]
The foliation induced by $f$ on each on these domain is trivial.

\noindent $\bullet $ We will now see that this property holds true for an energy  level $\EE'$ close to $\EE$.
Since $\EE$ is a regular level of $H$ there exists a neighborood $U$ of $\EE$ such that the Riemannian gradient $\nabla H$ does not vanish on $U$. We can assume without loss of generality that $H(\EE)=0$.

Consider the vector field $X:=\frac{\nabla H}{||\nabla H||^2}$ on $U$ and denote by $(\phi_t)$ its associated flow. Since $X$ is $C^1$, for $t$ small enough, $\phi_t$ is a diffeomorphism. Moreover, since $X.H\equiv1$, $\phi_t(\EE)\subset H\inv({t})$. 

Consider now an open neighborhood $V$ of $\Tt$ in $\EE$. We define a one-parameter family of vector fields $(Y_t)_t$ in $V$ in the following way 
\[
Y_t:=\phi_t^*\nabla (f).
\]
Then $Y_t$ depends in $C^1$ way of $t$, and we observe that $\Tt$ is a normally hyperbolic manifold for $Y_0=\nabla (f)$. 
Therefore one can apply the Hirsch-Pugh-Shub theorem of persistence of normally hyperbolic manifold \cite{HPS}: for $t$ close enough  to $0$, $Y_t$ admits a normally hyperbolic torus $\wti{\Tt}_t$ which  is $C^1$ close to $\Tt$.
We set $\Tt_t=\phi_t(\wti{\Tt}_t)$. Since $\phi_t$ is a diffeomorphism, $\Tt_t$ is a critical torus of $F$ contained in $H\inv(\{t\})$ and the  Hessian $\partial^2(F_{|_{\Tt_t}})$ of the restriction of $F$ to $\Tt_t$ has the same type than the Hessian of the restriction of $f$ to $\Tt$.
The first argument holds true and one gets two domains $D_t^+$ and $D_t^-$ as before.

\noindent $\bullet$ Consider the two domains $\ha{D}^+:=\bigcup_{t}D_t^-$ and $\ha{D}^+:=\bigcup_{t}D_t^-$. One can construct action variables in each of these two domains using the Arnol'd method ``by quadrature'' (see Annex A.2). One immediately checks that the action variables can be glued continuously along the union $\bigcup_{t}\Tt_t$ (with the convention $\Tt_0=\Tt$).
Therefore, the angle variables may be constructed by considering any Lagrangian section of the moment map $(H,F)$ as in Annex 1 step 5. 
\end{proof}

\begin{rem}\label{Klein} If $\Kk$ is a critical Klein bottle, one proves that  there exists a neighborhood $U$ of $\Kk$ in $M$ that admits a natural two-sheeted covering $\wti{U}$, such that the symplectic form $\Om$, and the functions $H$ and $F$ can be lifted to $\wti{U}$ (\cite{Z-96}).
Therefore, the study of the dynamics near a Klein bottle boils down to the study near a critical torus. Indeed,  if we denote by $\wti{\phi}_H^t$ the lifted flow and by $\pi:\wti{U}\rit U$ the canonical projection, $\phi_H^t\circ \pi=\pi\circ\wti{\phi}_H^t$.
\end{rem}

\paraga {{\it Elliptic orbits.}} Consider a critical circle $\Cc$ of $f$ (contained in $\EE$), that is, an elliptic periodic orbit. The following proposition stated in \cite{BBM-10} (and references there in) shows there exist action-angle coordinates in the neighborhood of $\Cc$.

\begin{prop}\label{aaelliptic}
There exists a neighborhood $U$ of $\Cc$ (in $M$), such that there exists canonical coordinates $(\vp,I,q,p)$ with $\{\vp,I\}=\{q,p\}=1$ such that 
\begin{itemize}
\item $H$ and  $f$ only depend on $I$ and $p^2+q^2$
\item $(\vp+2\pi,I, p, q)=(\vp,I, p, q)$
\item $\Cc$ is defined by  $I=0$ and $(p,q)=(0,0)$.
\end{itemize}
Moreover if $J:=\demi(p^2+q^2)$ one has 
\[
\Dp{H}{I}\neq 0,\quad 
\det
\begin{pmatrix}
\displaystyle\Dp{H}{I} & \displaystyle\Dp{H}{J}\\
\displaystyle\Dp{f}{I} & \displaystyle\Dp{f}{J}
\end{pmatrix}
\neq 0.
\]
\end{prop}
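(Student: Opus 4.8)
The plan is to deduce the statement from the symplectic normal form theorem for non-degenerate singular orbits of integrable Hamiltonian systems (Eliasson's theorem, in the rank-one \emph{transversally elliptic} case; cf. \cite{BBM-10} and the references therein). The only genuinely new content is to check that the hypotheses of that theorem are satisfied here and that the Bott non-degeneracy of $f$, together with dynamical coherence, is exactly what produces the two displayed non-degeneracy relations. Consider the moment map $F=(H,f):M\rit\R^2$. Along $\Cc$ one has $X^H\neq 0$, and since $\Cc$ is a critical circle of $f=F_{|\EE}$ and $\EE$ is a regular level of $H$, the differential $df$ is proportional to $dH$ at each point of $\Cc$; hence $dF$ has rank one along $\Cc$. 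Restrict to $\EE=H\inv\{c\}$ and pass to a local transversal $\Sig$ to its characteristic (flow) lines: $\Sig$ carries the reduced symplectic form, $\Cc$ projects to a single point $\bar q_0$, and $f$, being $X^H$-invariant, descends to a function $\bar f$ on $\Sig$ with a critical point at $\bar q_0$. The Hessian of $\bar f$ at $\bar q_0$ is the restriction of $\pa^2 f$ to a supplementary plane of $\R X^H$ inside $\EE$, so it is non-degenerate by the Bott condition and, since $\Cc$ has index $0$ or $2$ (dynamical coherence), definite. Thus the transverse linear Hamiltonian at $\bar q_0$ is an elliptic element of $\mfk{sp}(2,\R)$, and $\Cc$ is a compact non-degenerate singular orbit of $F$ of rank one and transversally elliptic type.

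Eliasson's theorem then provides a neighborhood $U$ of $\Cc$ in $M$ and canonical coordinates $(\vp,I,q,p)$, with $\vp\in\R/2\pi\Z$, $\{\vp,I\}=\{q,p\}=1$ and all other brackets zero, in which $\Cc=\{I=0,\ (p,q)=(0,0)\}$ and the (singular) fibration defined by $F$ is $\{I=\mathrm{const},\ J=\mathrm{const}\}$, where $J:=\demi(p^2+q^2)$. Because $H$ and $f$ are constant on the fibres of $F$ — which are precisely these joint level sets — one gets $H=H(I,J)$ and $f=f(I,J)$, and $\vp$ is $2\pi$-periodic by construction; this yields the three itemised properties and the stated form of the Poisson brackets.

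It remains to extract the two non-degeneracy relations. On $\Cc$ we have $q=p=0$, hence $\pa H/\pa\vp=\pa H/\pa q=\pa H/\pa p=0$ there and $X^H=(\pa H/\pa I)\,\pa_\vp$ on $\Cc$; since $\Cc$ is a nontrivial periodic orbit this forces $\pa H/\pa I\neq 0$ on $\Cc$, hence on a neighborhood (and the period of $\Cc$ equals $2\pi/(\pa H/\pa I)$). For the Jacobian, use $\pa H/\pa I\neq 0$ to solve $H(I,J)=c$ as $I=\iota(J)$ with $\iota(0)=0$, so that near $\Cc$ the level $\EE$ is coordinatised by $(\vp,q,p)$ and $f_{|\EE}=f(\iota(J),J)=:\ti f(J)$ depends on $J$ alone. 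Then $d(f_{|\EE})=\ti f'(J)(q\,dq+p\,dp)$, whose Hessian along $\Cc$ equals $\ti f'(0)(dq^2+dp^2)$, so the Bott non-degeneracy of $f$ on $\Cc$ is equivalent to $\ti f'(0)\neq 0$. Differentiating the relation defining $\iota$ gives $\ti f'(J)=(\pa H/\pa I)\inv\bigl(\tfrac{\pa H}{\pa I}\tfrac{\pa f}{\pa J}-\tfrac{\pa H}{\pa J}\tfrac{\pa f}{\pa I}\bigr)$, i.e.\ $\ti f'(0)$ differs from the displayed determinant by the nonzero factor $\pa H/\pa I$; hence that determinant is nonzero on $\Cc$, and by continuity on a neighborhood.

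The crux is the normal form invoked in the second paragraph. For a single Hamiltonian near an \emph{elliptic} periodic orbit the Birkhoff normalisation need not converge, because of small divisors; what removes this obstruction is precisely that $\Cc$ is a non-degenerate critical circle of a genuine first integral, so the commuting integral $f$ eliminates the small divisors and one is in the scope of Eliasson's theorem. If one prefers to avoid citing it, one can instead build partial action--angle coordinates $(\vp,I)$ along the orbit by suspending the transversal $\Sig$, perform the transverse Birkhoff normalisation on each nearby energy level, and observe that the resonant normal form is automatically a function of $J$ because a second functionally independent integral is available — but invoking the established normal form is considerably cleaner.
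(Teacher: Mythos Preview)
Your proof is correct. The paper does not actually prove this proposition: it merely states it and refers to \cite{BBM-10} and the references therein, so there is no ``paper's own proof'' to compare against. What you have written is precisely the argument one would give to justify the citation---invoke Eliasson's normal form for a rank-one transversally elliptic singular orbit, having checked that the Bott non-degeneracy of $f$ together with the index condition (equivalently, that $\Cc$ is elliptic in the dynamically coherent sense) places $\Cc$ in the hypotheses of that theorem.

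One point worth highlighting: your derivation of the two displayed non-degeneracy relations from the geometric data is more explicit than anything in the paper. The paper simply records $\partial H/\partial I\neq 0$ and the non-vanishing of the Jacobian as part of the quoted statement, without indicating where they come from. Your computation---that $\partial H/\partial I\neq 0$ follows from $X^H\neq 0$ on $\Cc$, and that the Jacobian equals $(\partial H/\partial I)\,\ti f'(0)$ where $\ti f'(0)\neq 0$ is exactly the Bott condition read in the normal-form coordinates---is the clean way to see that these are not extra hypotheses but consequences of the setup. This is a genuine addition over what the paper supplies.
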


\begin{rem}
The open domain $U\setm \Cc$ is a action-angle domain $\ha{\jA}$.
\end{rem}

\begin{cor}\label{aaelliptic2} There exists a neighborhood $O$ of $(0,0)\in \R^2$ and  a projection 
$
\pi: \T^2\times O\rit U, (\vp,\psi,I,J)\ma (\vp, I,\sqrt{2J}\cos \psi, \sqrt{2J}\sin \psi)
$
such that the following diagram commutes 
\begin{equation*}
\xymatrix{
 \T^2\times O \ar[r]^{\phi^t} \ar[d]_{\pi}  & \T^2\times O \ar[d]^{\pi}  \\
    U \ar[r]_{\phi_H^t} & U.
  }
  \end{equation*}
  where $(\phi_t)$ is the flow associated to the vector field 
\begin{equation}\label{aaelliptic3}
  \dot{I}=\dot{J}=0,\quad \dot{\vp}=\Dp{H}{I}(I,J),\quad \dot{\psi}=\Dp{H}{J}(I,J).
\end{equation}
\end{cor}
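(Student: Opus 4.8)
The plan is to translate the Hamiltonian flow $\phi_H^t$ into the ``symplectic polar'' chart $q=\sqrt{2J}\cos\psi$, $p=\sqrt{2J}\sin\psi$ of the $(q,p)$--plane; granting the normal form of Proposition~\ref{aaelliptic}, the corollary then reduces to a short computation, and I do not expect any real obstacle beyond bookkeeping.

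First I would write Hamilton's equations for $H$ in the canonical coordinates $(\vp,I,q,p)$ of Proposition~\ref{aaelliptic}. Since $\{\vp,I\}=\{q,p\}=1$ and $H$ depends on these variables only through $I$ and $J=\demi(p^2+q^2)$, one obtains $\dot I=0$, $\dot\vp=\Dp{H}{I}(I,J)$, $\dot q=\Dp{H}{p}=\Dp{H}{J}\,p$ and $\dot p=-\Dp{H}{q}=-\Dp{H}{J}\,q$. Hence $\frac{d}{dt}(p^2+q^2)=0$, so $I$ and $J$ are preserved by $\phi_H^t$, and on each invariant level $\{I=I_0,\ J=J_0\}$ the vector $(q,p)$ rotates uniformly about the origin with angular velocity $\pm\,\Dp{H}{J}(I_0,J_0)$, the sign being fixed by the orientation of the polar chart. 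Read in the coordinates $(\vp,\psi,I,J)$ this is precisely the autonomous vector field \eqref{aaelliptic3}.

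Next I would take for $O$ any neighbourhood of $(0,0)$ in $\R^2$ with $\pi(\T^2\times O)\subset U$; since $I$ and $J$ are first integrals of \eqref{aaelliptic3}, its flow $(\phi^t)$ is complete on $\T^2\times O$ and preserves this set, so the left-hand side of the diagram is well defined. Writing a solution of \eqref{aaelliptic3} issued from $(\vp,\psi,I,J)$ as $\bigl(\vp+t\,\Dp{H}{I},\ \psi+t\,\Dp{H}{J},\ I,\ J\bigr)$ and applying $\pi$ to it, one lands on the curve $t\mapsto\bigl(\vp+t\,\Dp{H}{I},\ I,\ \sqrt{2J}\cos(\psi+t\,\Dp{H}{J}),\ \sqrt{2J}\sin(\psi+t\,\Dp{H}{J})\bigr)$, which by the previous paragraph is exactly the $\phi_H^t$--orbit through $\pi(\vp,\psi,I,J)$. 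This gives $\pi\circ\phi^t=\phi_H^t\circ\pi$ on the open dense set $\{J>0\}$. On $\{J=0\}$ the chart $\pi$ degenerates (it collapses $\T^2\times\{J=0\}$ onto periodic orbits, in particular $\T^2\times\{I=0,\ J=0\}$ onto $\Cc$), but it remains continuous, so the intertwining relation extends from $\{J>0\}$ to all of $\T^2\times O$; this is the only point requiring a word of care, and it is handled by this density argument. The commutation of the diagram follows.
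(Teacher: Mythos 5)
Your approach is the natural one, and the paper itself gives no proof for this corollary (it is stated as an immediate consequence of Proposition~\ref{aaelliptic}), so there is nothing of substance to compare against. The structure of your argument is sound: pass to Hamilton's equations in the canonical chart $(\vp,I,q,p)$, observe that $I$ and $J=\demi(q^2+p^2)$ are conserved, rewrite the $(q,p)$--rotation in the symplectic polar chart, and then extend the intertwining from $\{J>0\}$ to all of $\T^2\times O$ by continuity of both sides and density of $\{J>0\}$. That last density step, which you correctly flag as the only point requiring care, is indeed the way to handle the degeneracy of $\pi$ along $J=0$.

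One point you should not leave as a ``$\pm$'': with the map $\pi$ exactly as written (i.e.\ $q=\sqrt{2J}\cos\psi$, $p=\sqrt{2J}\sin\psi$) and with the convention $\{q,p\}=1$ from Proposition~\ref{aaelliptic}, one finds $dq\wedge dp=dJ\wedge d\psi=-d\psi\wedge dJ$, so $\{\psi,J\}=-1$ and hence $\dot\psi=-\Dp{H}{J}(I,J)$, not $+\Dp{H}{J}$. Equivalently, writing $z=q+ip$ gives $\dot z=-i\,\Dp{H}{J}\,z$, i.e.\ clockwise rotation. The sign in~\eqref{aaelliptic3} as printed is therefore not ``fixed by the orientation of the polar chart'' in your favour --- it is forced to be negative, and the discrepancy with the displayed formula is a typo (or an implicit relabelling $\psi\mapsto-\psi$) in the statement. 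You should say so explicitly rather than absorb it into a $\pm$. A second, smaller remark: since $\pi$ involves $\sqrt{2J}$, the neighbourhood $O$ must lie in $\R\times\R_{\geq 0}$, and you should also require $\pi(\T^2\times O)\subset U$; you do say the latter, so just add the half-plane constraint. With these two points made precise, the proof is complete.
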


\paraga {\it $\8$-levels and simple polycyles.} As in the case of the Klein bottles, we observe that it suffices to study orientable $\8$-levels. Indeed, one proves  that,  if $\PP$ is a nonorientable $\8$-level,  there exists a neighborhood $U$ of $\PP$ in $M$ that admits a natural two-sheeted covering $\wti{U}$, such that the symplectic form $\Om$, and the functions $H$ and $F$ can be lifted to $\wti{U}$ (see \cite{BBM-10}, or \cite{Z-96}).
Therefore,  if we denote by $\wti{\phi}_H^t$ the lifted flow and by $\pi:\wti{U}\rit U$ the canonical projection, $\phi_H^t\circ \pi=\pi\circ\wti{\phi}_H^t$.

Consider an orientable $\8$-level $\PP$ and denote by $\Cc$ the hyperbolic orbit contained in $\PP$. The following proposition is stated in \cite{BBM-10} (see also references therein).

\begin{prop}\label{coorhype}
In a neighborhood $U$ of $\Cc$ (in $M$) there exist canonical coordinates $(\vp,I,q,p)$ with $\{\vp,I\}=\{q,p\}=1$ such that 
\begin{itemize}
\item $H$ and  $F$ only depend on $I$ and $qp$,
\item $(\vp+2\pi,I, p, q)=(\vp,I, p, q)$,
\item $\Cc$ is defined by  $I=0$ and $(p,q)=(0,0)$.
\end{itemize}
Moreover, if $J=qp$, one has 
\[
\Dp{H}{I}\neq 0,\quad \Dp{H}{J}\neq 0,\quad 
\det
\begin{pmatrix}
\displaystyle\Dp{H}{I} & \displaystyle\Dp{H}{J}\\[10pt] 
\displaystyle\Dp{F}{I} & \displaystyle\Dp{F}{J}
\end{pmatrix}
\neq 0.
\]
\end{prop}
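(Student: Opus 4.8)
\medskip

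\noindent\textit{Proof proposal.}
The plan is to normalise the periodic orbit $\Cc$ in stages: first produce symplectic coordinates adapted to the flow of $X^H$; then use the hyperbolicity of $\Cc$, which is guaranteed by dynamical coherence, to remove the angular dependence of the transverse quadratic part; then normalise to all orders with the help of the commuting integral $F$; and finally read off the three nondegeneracy relations. Throughout, since $\PP$ is an orientable $\8$-level, no passage to a two-sheeted cover is needed.

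First I would set up coordinates near $\Cc$. As $\EE$ is a regular level of $H$, the field $X^H$ does not vanish along $\Cc$; let $T$ be its period and $h_0=H(\Cc)$. By the symplectic normal form near a periodic orbit (Weinstein's tubular neighbourhood theorem, followed by reduction along the flow direction) there is a neighbourhood $U$ of $\Cc$ in $M$ and canonical coordinates $(\vp,I,x,y)$, with $\{\vp,I\}=\{x,y\}=1$ and $\vp\in\R/2\pi\Z$, such that $\Cc=\{I=0,\ x=y=0\}$ and
\[
H=h_0+\om_0\,I+\demi\,\big\langle A(\vp)(x,y),(x,y)\big\rangle+O_3,\qquad \om_0=\frac{2\pi}{T}>0,
\]
where $A(\vp)$ is a $2\pi$-periodic family of symmetric $2\times2$ matrices; the $I$-coefficient can be taken constant (equal to $\om_0$) after a change of the angular variable, and there is no term linear in $(x,y)$ because $\Cc$ is $X^H$-invariant.

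Next I would linearise the transverse part. The linearised transverse return map along $\Cc$ is a symplectic monodromy; since $(\EE,\phi_H,f)$ is dynamically coherent and $\Cc$ has index $1$, the orbit $\Cc$ is \emph{hyperbolic}, so this monodromy has real eigenvalues $e^{\pm\lambda T}$ with $\lambda>0$. Orientability of $\PP$ means the stable and unstable directions are not exchanged along $\Cc$, so there is a $2\pi$-periodic linear symplectic change of variables producing new coordinates $(q,p)$ in which the transverse quadratic part becomes the $\vp$-independent hyperbolic form $\nu\,qp$ with $\nu\neq0$. Thus $H=h_0+\om_0 I+\nu\,qp+O_3$ with $\vp$-free quadratic part and, since $\{F,H\}=0$, the same change brings $F$ to the form $F=F(\Cc)+c_1 I+c_2\,qp+O_3$. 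Then I would normalise to all orders: with quadratic part $\om_0 I+\nu qp$, the homological operator $\{\om_0 I+\nu qp,\cdot\}$ multiplies a monomial $e^{ik\vp}I^a q^m p^n$ by $ik\om_0+\nu(n-m)$, which vanishes \emph{only} for $k=0$ and $m=n$, i.e. on functions of $I$ and $J:=qp$ alone. So the formal Birkhoff normalisation removes from $H$ all $\vp$-dependence and all $q,p$ monomials with $m\neq n$, and, $F$ commuting with $H$, simultaneously from $F$; formally $H$ and $F$ become functions of $(I,J)$. Upgrading this to a genuine smooth symplectomorphism fixing $\Cc$ is the rank-one, hyperbolic case of the symplectic linearisation theorem for integrable systems near a nondegenerate singular orbit (an Eliasson-type statement — this is what \cite{BBM-10} invokes, together with the references therein; alternatively one realises the formal transformation by a Moser path / averaging argument built on the local hyperbolic $\R$-action of $X^H$ and $X^F$). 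This yields the asserted coordinates $(\vp,I,q,p)$, $\vp$ being $2\pi$-periodic, with $\Cc=\{I=0,\ q=p=0\}$ and $H=H(I,J)$, $F=F(I,J)$, $J=qp$.

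It then remains to read off the nondegeneracy relations. One gets $\Dp{H}{I}(0,0)=\om_0\neq0$ since $\Cc$ is a genuine periodic orbit, and $\Dp{H}{J}(0,0)=\nu\neq0$ by hyperbolicity of $\Cc$ — this is exactly where dynamical coherence is used. For the Jacobian, near $\Cc$ the energy surface $\{H=h_0\}$ is the graph $I=-(\nu/\om_0)J+O(J^2)$ over $(\vp,q,p)$, so $f=F_{|\EE}$ has, transverse to $\Cc$ (i.e. in the $(q,p)$-plane), leading term $\big(\Dp{F}{J}-\tfrac{\nu}{\om_0}\Dp{F}{I}\big)(0,0)\,qp$, whose Hessian is nondegenerate precisely when $\om_0\,\Dp{F}{J}(0,0)-\nu\,\Dp{F}{I}(0,0)\neq0$, that is, when
\[
\det\begin{pmatrix}\Dp{H}{I}&\Dp{H}{J}\\[8pt]\Dp{F}{I}&\Dp{F}{J}\end{pmatrix}\neq0\quad\text{at the origin.}
\]
Since $(\EE,\phi_H,f)$ is a nondegenerate Bott system, this transverse Hessian \emph{is} nondegenerate along $\Cc$, so the determinant is nonzero, which finishes the verification. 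The only genuinely hard step is the smooth upgrade of the formal normal form — the singular-orbit, hyperbolic version of Eliasson's theorem; the periodic-orbit normal form, the Floquet reduction of the transverse part, and the identification of the coefficients $\om_0$, $\nu$ and the Bott Hessian are all routine.
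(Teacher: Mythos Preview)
The paper does not prove this proposition at all: it simply states it and defers to \cite{BBM-10} ``and references therein''. Your sketch is therefore not a comparison with the paper's argument but an attempt to supply what the paper omits.

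As a sketch it is correct and follows the standard route. The staging --- Weinstein tubular neighbourhood for the periodic orbit, Floquet reduction of the transverse linearisation (where orientability of $\PP$ indeed guarantees positive real multipliers and hence a periodic diagonalisation), formal Birkhoff normalisation with the resonance computation, and finally the identification of $\partial H/\partial I$, $\partial H/\partial J$ and the Jacobian with the period, the hyperbolic exponent, and the Bott nondegeneracy of $f$ --- is the right architecture. Your homological-operator eigenvalue has a sign slip (one gets $-ik\om_0+\nu(m-n)$ rather than $ik\om_0+\nu(n-m)$), but the vanishing locus $k=0$, $m=n$ is unaffected.

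You are also right that the only genuinely nontrivial step is the passage from the formal normal form to a smooth symplectomorphism; this is the rank-one hyperbolic case of Eliasson's theorem (sometimes attributed to Colin de Verdi\`ere--Vey or Dufour--Molino in this setting), and it is exactly what the references behind \cite{BBM-10} provide. Calling it ``a Moser path / averaging argument'' undersells it: the convergence of the Birkhoff series is the heart of the matter and is not a routine deformation argument, so at this point you are, like the paper, ultimately relying on a citation rather than a self-contained proof.
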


Set $O_1:(H,F)(U)\subset \R^2$ and $O_2:=(I,J)(U)\subset \R^2$. Shrinking $U$ if necessary, the map $(H,F):O_2\rit O_1$ is a diffeomorphim. We denote by $(\ha I, \ha J) :O_2\rit O_1$ its inverse. 

Let $V$ be a neighborhood of $\PP$ in $M$ that contains $U$ and set 
\[
\VV= V\bigcap H\inv(H(U))\bigcap F\inv(F(U)).
\]
We define two global coordinates $\II,\JJ$ in $\VV$ by setting 
\[
\II(z)=\ha I(H(z),F(z)),\quad \JJ(z)=\ha J(H(z),F(z)).
\]

Let us now pass to the definition of simple polycycles. Recall that a polycycle is a connected union of hyperbolic orbits $\{\Cc_1,\ldots,\Cc_q\}$ and their invariant manifolds. For any $\Cc_k$, we denote by $U_k$ the neighborhhood of $\Cc_k$ given by the previous proposition and by $(\vp_k, I_k,q_k,p_k)$ the corresponding coordinates. We set $J_k=p_kq_k$.

\begin{cor}\label{coorhype2} Fix $e\in H(U_k)$ and set $\UU\ke:=U_k\cap H\inv(\{e\})$\index{$\UU\ke$}.
\begin{enumerate}
\item the coordinates $(\vp_k,q_k,p_k)$ form a local chart of $\UU_{k,e}$, 
\item the compact set $\Cc_{k,e}:=\{(\vp_k,0,0)\,|\, \vp_k\in \T\}$ is a hyperbolic orbit.
\end{enumerate}
\end{cor}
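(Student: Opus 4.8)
The plan is to read off both assertions from Proposition~\ref{coorhype} by writing the flow in the canonical coordinates $(\vp_k,I_k,q_k,p_k)$ it furnishes near $\Cc_k$; recall that in these coordinates $H$ depends only on $(I_k,J_k)$ with $J_k=p_kq_k$, and that $\Dp{H}{I_k}$ and $\Dp{H}{J_k}$ never vanish on $U_k$.

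For~(1) I would apply the implicit function theorem to the equation $H(I_k,J_k)=e$. Since $\Dp{H}{I_k}\neq0$, after shrinking $U_k$ (so that $H(U_k)$ is a short interval about $H(\Cc_k)$) one solves it for $I_k$: there is a smooth function $g_e$ of the single variable $J_k=p_kq_k$ such that a point of $U_k$ lies in $\UU\ke$ precisely when $I_k=g_e(p_kq_k)$. Hence the projection $(\vp_k,I_k,q_k,p_k)\ma(\vp_k,q_k,p_k)$ restricts to a diffeomorphism from $\UU\ke$ onto an open subset of $\T\times\R^2$, which is the claimed chart.

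For~(2) the first step is to write Hamilton's equations in these coordinates,
\[
\dot\vp_k=\Dp{H}{I_k},\qquad\dot I_k=0,\qquad\dot q_k=\Dp{H}{J_k}\,q_k,\qquad\dot p_k=-\Dp{H}{J_k}\,p_k,
\]
so that $I_k$ and $J_k=p_kq_k$ are first integrals of $\phi_H$. In the chart of~(1) the locus $\{q_k=p_k=0\}=\Cc_{k,e}$ is the set on which $I_k$ takes the constant value $I_*:=g_e(0)$ solving $H(I_*,0)=e$; it is flow-invariant and carries $\dot\vp_k=\Dp{H}{I_k}(I_*,0)=:\om\neq0$, hence it is a periodic orbit of $\phi_H$ of period $T=2\pi/|\om|$, compact as the continuous image of $\T$. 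For hyperbolicity I would note that inside the $3$-manifold $\UU\ke$ the directions transverse to $\Cc_{k,e}$ are spanned by $\pa_{q_k}$ and $\pa_{p_k}$ — this is where~(1) is used, $I_k$ being slaved to $J_k$ — that the linearization of $(\dot q_k,\dot p_k)$ at $q_k=p_k=0$ is $\diag(\mu,-\mu)$ with $\mu:=\Dp{H}{J_k}(I_*,0)\neq0$, and therefore that the Poincar\'e return map of $\Cc_{k,e}$ to a section $\{\vp_k=\mathrm{const}\}$ has linear part $\diag\!\big(e^{\mu T},e^{-\mu T}\big)$, whose eigenvalues have moduli different from $1$.

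All the computations here are routine; the one point that genuinely exploits the structure at hand is the hyperbolicity claim. A priori, restricting the flow to a fixed energy level could convert a transverse hyperbolic direction into a center direction, and what forbids this is exactly assertion~(1): on $\UU\ke$ the action $I_k$ is a function of $J_k=p_kq_k$, so the only directions transverse to the orbit are $(q_k,p_k)$, governed by the nonvanishing quantity $\pm\Dp{H}{J_k}$. This is why I would prove~(1) first and feed it into the proof of~(2).
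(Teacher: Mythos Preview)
Your proof is correct and follows essentially the same approach as the paper: the implicit function theorem on $H(I_k,J_k)=e$ for (1), and then writing the restricted vector field on $\UU\ke$ for (2). The paper is in fact more terse than you are---after displaying $\dot\vp_k$, $\dot q_k$, $\dot p_k$ it simply declares the hyperbolicity ``obvious''---whereas you spell out the linearization and the Poincar\'e map eigenvalues; your added remark that (1) is what confines the transverse directions to the $(q_k,p_k)$ plane is a nice clarification absent from the paper.
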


\begin{proof} 
We denote by $D$ the open domain in $\R^3$ given by the coordinates $(I_k,q_k,p_k)$ and by $\ha{D}$ its image in $\R^2$ by the map $(I_k,q_k,p_k)=(I_k,q_kp_k)$. Let $\pi_J:(I_k,q_kp_k)\ma (q_kp_k)$. 
\vspace{0.15cm}

\noindent (1)  By the implicit function theorem, there exists a function $\II_k:\pi_J(\ha{D})\times H(U_k)\rit \R$ such that 
 \[
 H(I_k,J_k)=e \Longleftrightarrow I_k=\II_k(J_k,e), 
 \]
so  $(\vp_k,I_k,q_k, p_k)\in \UU\ke$ if and only if $I_k=\II_k(p_kq_k,e)$.
 
\noindent (2) The vector field $X^H$ restricted to  $\UU\ke$ reads:
\begin{equation*}
\begin{aligned}
\dot{\vp_k} & = \Dp{H}{I_k}(I_k,J_k)  &&= \Dp{H}{I_k}(\II(q_kp_k),q_kp_k)\\
\dot{q_k} & =-\Dp{H}{J_k}(I_k,J_k) \Dp{J_k}{q_k}(J_k) &&= -p_k\Dp{H}{J_k}(\II(q_kp_k),q_kp_k)\\
\dot{q_k} & =\Dp{H}{J_k}(I_k,J_k)\Dp{J_k}{p_k}(J_k) &&= q_k\Dp{H}{J_k}(\II(q_kp_k),q_kp_k),
\end{aligned} 
\end{equation*}
Obviously  the compact set $\{(\vp_k,0,0)\,|\, \vp_k\in \T\}$ is a hyperbolic orbit.
\end{proof}
%


We say that the polycycle $\PP$ is \emph{continuable} if there exists $\de_0>0$ such that for all $e\in\, ]e_0-\de_0,e_0+\de_0[\subset \bigcap_{1\leq k\leq q}H(U_k)$, the hyperbolic orbits $\Cc\ke$, $1\leq k\leq q$ lie in the same polycycle $\PP_e$, which is diffeomorphic to $\PP$.  
The one-parameter family $\PP_e$ is a (differentiable) deformation of $\PP$ and we set 
\[
\ha{\PP}=\bigcup_{e\in J(\de_0)}\PP_e\subset H\inv(]e_0-\de_0,e_0+\de_0[).
\]
Observe that an $\8$-level is continuable. Let us introduce the following definition.

\begin{Def}
We call \emph{simple polycycle} a continuable polycycle that satisfies the following properties:
\begin{enumerate}
\item there exist an open subset $O$ in $\R^2$,  a neighborhood $U$ of $\PP$, saturated for $F$, and a diffeomorphism 
\[
\BB: \T\times O\times ]e_0-\de_0,e_0+\de_0[\,\rit  U
\]
such that
\begin{enumerate}
\item the submanifold $V=\BB\inv(\{0\}\times O\times  ]e_0-\de_0,e_0+\de_0[)$ is transverse to $F$, 
\item $ \BB(\T\times O\times \{e\})\subset H\inv(e),\quad \forall e\in \, ]e_0-\de_0,e_0+\de_0[$,
\item $
F(\BB(\vp, x,  e))=F(\BB(\vp',x,e)),\quad \forall (\vp,\vp',x,e)\in \T^2\times O\times ]e_0-\de_0,e_0+\de_0[$,
\end{enumerate}
\item there exist two functions $\II$\index{$\II$} and $\JJ$\index{$\JJ$} in $U$, such that one can find coordinates $(\vp_k, I_k,q_k,p_k)$ in a neighborhood $U_k$ of each $\Cc_k$ such that $\II$  and $\JJ$ coincide with $I_k$ and $J_k$.
\end{enumerate}
\end{Def}

Obviously, an orientable $\8$-level is a simple polycycle.
We emphasize the following property of simple polycycles.

\begin{pte}
Let $T\ke$ be the period of the hyperbolic orbit $\Cc\ke$. Then $T\ke$ only depends on $e$, that is, all the hyperbolic orbits that lie in the same polycycle $\PP_e$ have the same period.
\end{pte}

\begin{proof}
Fix $k$ and consider the symplectic cylinder 
\[
\CC_k:=\bigcup_{e\in H(U_k)}\Cc\ke.
\]
We first observe that the restriction of $H$ to $\CC_k$ depends only on $I_k$. We set $H_{|\CC_k}(\vp_k, I_k)=h_k(I_k)$ and the vector field $X^{H_{|\CC_k}}$   reads 
\[
\dot{\vp}_k=h_k'(I_k), \quad  \dot{I}_k=0.
\]
Therefore, since $h_k$ is a diffeomorphism, $T\ke:=h'_k(\II_k(0,e))\inv$. 
Let $\chi_k:=h_k\inv$.
 Then  $T\ke:=\chi_k'(e)$.
 Now  if $k'\neq k$, $\chi_{k'}(e)=I_{k'}(0,e)=\II(0,e)=I_k(0,e)=\chi_k(e)$, that is, $\chi_k$ does not depend on $k$. We denote it by $\chi$.
Then, for any $1\leq k\leq p$, $T\ke=\chi'(e)$.
\end{proof}

\paraga {\it Maximal action-angle domains.} We denote by $\Rr(f)$ the set of regular values of $f$ and by ${\rm{Crit}}(f)$ the set of its critical values. 
If $c\in{\rm{Crit}}(f)$, we denote by $\RR_c$ the union  of the connected components of $f\inv(\{c\})$ that does not contain any critical point. We define the \emph{regular set}\index{$\RR$} of $f$ as 
\[
\RR:=f\inv(\{\Rr\})\cup\left(\bigcup_{c\in{\rm{Crit}}(f)}\RR_c\right).
\] 
We denote by $\TT_c$ the set of all critical tori of $f$ and we  introduce the  domain $\ha{\RR}:=\RR\cup\TT_c$.
By  Arnol'd-Liouville theorem and proposition \ref{aaTorecritique}, one sees that a connected component of $\ha{\RR}$ is 
 the connected component of an intersection $\Aa:=\ha{\Aa}\cap\EE$, where $\ha{\Aa}$\index{$\Aa$} is a action-angle domain. Such a domain $\Aa$  satisfies the following properties:
\begin{itemize}
\item there exist $a,b\in{\rm{Crit}}(f)$ with $a<b$ and $\Aa= f\inv(]a,b[)$,
\item for all $x\in\,]a,b[$, $f\inv(x)\cap \Aa$ is diffeomorphic to $\T^2$,
\item there is a critical point of $f$ in each connected component of $\partial \Aa$.
\end{itemize}
Therefore $\Aa$ is diffeomorphic to $\T^2\times\,]0,1[$. We say that $\Aa$ is a \emph{maximal action-angle domain of} $(\EE,\phi_H,f)$. The connected component of $\partial\Aa$ can be either an elliptic orbit, a Klein bottle or  containded in a $\8$-level.

\section{Polynomial entropies}

In this section, we briefly  define  the polynomial entropies. For a more complete introduction see \cite{Mar-09}.
\subsection{Definitions of the strong and  weak polynomial entropies.}
Given a compact metric space $(X,d)$ and a map $f:X\rit X$, for each $n\leq 1$, one can define the dynamical metric
\begin{equation}
d^{f}_n(x,y)=\max_{0\leq k\leq t-1}d(f^k(x),f^k(y)).
\end{equation}
All these metrics $d^{f}_k$ are equivalent and define the initial topology on $X$. In particular, $(X,d_k^{f})$ is compact. So for any $\varepsilon>0$, $X$ can be covered by a finite number of balls of radius $\varepsilon$ for $d^{f}_k$. Let $G_k^f(\varepsilon)$ be the minimal number of balls of such a covering.

\begin{Def}
The \emph{strong polynomial entropy} $\hp$ of $f$ is defined as
\[
\hp(f)=\sup_{\eps}\inf\left\{\sigma>0|\limsup\frac{1}{n^{\sigma}}G_n^f(\eps)\right\}=\sup_{\eps\rit 0}\limsup_{n\rit\infty}\frac{\Log G_n^f(\eps)}{\Log n}.
\]
\end{Def}

In order to introduce the weak polynomial entropy, let us set out some notations. For $\eps>0$ consider the set
\[
\BB^f_\eps:=\{B_n^f(x,\eps)\,|\,(x,n)\in X\times\N\},
\]
of all open balls of radius $\eps$ for all the distances $d_n^f$. We denote by $\CC^f(\eps)$ the set of the coverings of $X$ by balls of $\BB^f_\eps$, and by $\CC^f_{\geq N}(\eps)$ the subset of $\CC^f(\eps)$ formed by the coverings $(B_{n_i}^f(x_i,\eps))_{i\in I}$ such that $n_i\geq N$.
Given an element $C=(B_{n_i}^f(x_i,\eps))_{i\in I}$ in $\CC^f(\eps)$ and a non negative real parameter $s$, we set
\[
M(C,s)=\sum_{i\in I}\frac{1}{n_i^s}\in[0,\infty]
\]
Note that since a ball may admit several representatives of the form $B_{n_i}^f(x_i,\eps)$, $M(C,s)$ depends on the family $C$ and not only of its image. Let $N\in \N^*$. The compactness of $X$ allows us to define
\[
\De^f(\eps,s,N)=\Inf\{M(C,s)\,|\,C\in\CC^f_{\geq N}(\eps)\}\in [0,\infty].
\]
Obviously $\De^f(\eps,s,N)\leq \De^f(\eps,s,N')$ when $N'\leq N$, so one can define
\[
\De^f(\eps,s)=\lim_{N\rit\infty}\De^f(\eps,s,N)=\sup_{N\in \N^*}\De^f(\eps,s,N).
\]
The definition of the weak polynomial entropy is based on the following lemma.
\begin{lem}
There exists a unique critical value $s_c^f(\eps)$ such that
\[
\De^f(\eps,s)=0\:\:\: if\:\:\: s> s_c^f(\eps)\:\:\:\: and\:\:\:\: \De^f(\eps,s)=\infty\:\:\: if\:\:\: s< s_c^f(\eps).
\]
\end{lem}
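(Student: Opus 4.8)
The plan is to establish the ``phase transition'' for the function $s\mapsto \De^f(\eps,s)$ by exploiting the monotonicity of $M(C,s)$ in $s$ and the way the weights $n_i^{-s}$ scale. First I would fix $\eps>0$ and observe that for a fixed covering $C=(B_{n_i}^f(x_i,\eps))_{i\in I}\in\CC^f_{\geq N}(\eps)$, the map $s\mapsto M(C,s)=\sum_{i\in I} n_i^{-s}$ is nonincreasing, since each $n_i\geq N\geq 1$. Taking the infimum over $C$ preserves this, so $s\mapsto \De^f(\eps,s,N)$ is nonincreasing, and hence so is the supremum over $N$, namely $s\mapsto\De^f(\eps,s)$. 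This already shows that the set $\{s\geq 0\mid \De^f(\eps,s)=0\}$ is an up-set (an interval of the form $[s_c,\infty)$ or $(s_c,\infty)$) and likewise $\{s\mid \De^f(\eps,s)=+\infty\}$ is a down-set; it remains to see there is a single threshold and that the value is $0$ above it and $\infty$ below it, with no intermediate finite nonzero values except possibly at the threshold itself.

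Next I would prove the key scaling estimate: if $\De^f(\eps,s_0)<\infty$ for some $s_0$, then $\De^f(\eps,s)=0$ for every $s>s_0$. Fix $s>s_0$ and let $\eta>0$. Choose $N_0$ and a covering $C\in\CC^f_{\geq N_0}(\eps)$ with $M(C,s_0)\leq \De^f(\eps,s_0)+1=:A<\infty$. For any $N\geq N_0$, this same covering $C$ lies in $\CC^f_{\geq N}(\eps)$ only if all its $n_i\geq N$, which need not hold; so instead I would argue at the level of $\De^f(\eps,s_0,N)$: for each $N\geq N_0$ pick $C_N\in\CC^f_{\geq N}(\eps)$ with $M(C_N,s_0)\leq \De^f(\eps,s_0,N)+1\leq A$. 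Since every ball in $C_N$ has index $n_i\geq N$, we get
\[
M(C_N,s)=\sum_{i} n_i^{-s}=\sum_i n_i^{-s_0}\,n_i^{-(s-s_0)}\leq N^{-(s-s_0)}\sum_i n_i^{-s_0}=N^{-(s-s_0)}M(C_N,s_0)\leq A\,N^{-(s-s_0)}.
\]
Hence $\De^f(\eps,s,N)\leq A\,N^{-(s-s_0)}\to 0$ as $N\to\infty$, so $\De^f(\eps,s)=0$. Dually, if $\De^f(\eps,s_1)>0$ then $\De^f(\eps,s)=+\infty$ for every $s<s_1$: indeed for any $C\in\CC^f_{\geq N}(\eps)$ one has $M(C,s)\geq N^{s_1-s}M(C,s_1)\geq N^{s_1-s}\De^f(\eps,s_1,N)$, and since $\De^f(\eps,s_1,N)$ increases to the positive limit $\De^f(\eps,s_1)$, it is bounded below by a positive constant for $N$ large, whence $\De^f(\eps,s,N)\to\infty$.

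Finally I would assemble these two implications. Define
\[
s_c^f(\eps):=\inf\{s\geq 0\mid \De^f(\eps,s)<\infty\},
\]
with the convention that the set is nonempty because $\De^f(\eps,s,N)\leq G_N^f(\eps)\,N^{-s}\to 0$ for $s$ large (using that $X$ is covered by $G_N^f(\eps)$ balls of $\BB^f_\eps$ all of index exactly $N$), so large $s$ give $\De^f(\eps,s)=0<\infty$; and the infimum is finite. For $s>s_c^f(\eps)$ there is $s_0\in(s_c^f(\eps),s)$ with $\De^f(\eps,s_0)<\infty$, so by the first implication $\De^f(\eps,s)=0$. For $s<s_c^f(\eps)$ every $s_1\in(s,s_c^f(\eps))$ has $\De^f(\eps,s_1)=+\infty$ (it cannot be finite, else $s_c^f(\eps)\leq s_1$), hence by the monotonicity $\De^f(\eps,s)\geq\De^f(\eps,s_1)=+\infty$. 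Uniqueness of $s_c^f(\eps)$ is immediate: any value $s'$ with the stated property must satisfy $s'\geq s_c^f(\eps)$ (since $\De^f(\eps,s)=0$ for $s$ slightly above $s'$ forces finiteness, so $s_c^f(\eps)\leq s'$) and $s'\leq s_c^f(\eps)$ (since $\De^f(\eps,s)=\infty$ for $s<s'$ prevents finiteness below $s'$), so $s'=s_c^f(\eps)$.

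The only genuinely delicate point is a bookkeeping one rather than a conceptual one: one must be careful to run the scaling argument at the level of the truncated quantities $\De^f(\eps,s,N)$ and pass to the limit $N\to\infty$ afterwards, rather than trying to rescale a single fixed covering (whose ball indices are bounded below by its own minimal index, not by an arbitrarily large $N$). Once that is handled, the monotonicity in $s$ and the elementary inequality $n_i^{-s}\leq N^{-(s-s_0)}n_i^{-s_0}$ for $n_i\geq N$ do all the work, and no dynamical input about $f$ beyond the compactness of $X$ is needed.
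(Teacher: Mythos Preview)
Your argument is correct and complete: the monotonicity of $s\mapsto M(C,s)$ together with the scaling inequality $n_i^{-s}\le N^{-(s-s_0)}n_i^{-s_0}$ for $n_i\ge N$ is exactly what one needs, and you were right to run the estimate at the level of $\De^f(\eps,s,N)$ before passing to the supremum in $N$. The paper itself states this lemma without proof, so there is nothing to compare against; your write-up is the standard Hausdorff-measure-style argument and would serve perfectly well as the omitted proof.

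One small correction: your parenthetical claim that the set $\{s\ge 0\mid \De^f(\eps,s)<\infty\}$ is nonempty, based on $G_N^f(\eps)N^{-s}\to 0$ for large $s$, is not true in general---it fails precisely when $f$ has positive topological entropy, since then $G_N^f(\eps)$ grows exponentially. Fortunately this does not matter for the lemma: the critical value $s_c^f(\eps)$ is allowed to be $+\infty$ (the paper's subsequent definition explicitly places $\hp^*(f)$ in $[0,\infty]$), in which case the condition ``$\De^f(\eps,s)=0$ for $s>s_c^f(\eps)$'' is vacuous and ``$\De^f(\eps,s)=\infty$ for $s<s_c^f(\eps)$'' holds for all $s$. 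So simply drop the finiteness assertion and allow $\inf\emptyset=+\infty$; the rest of your argument goes through unchanged.
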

Since $s_c^f(\eps)\leq s_c(\eps')$ when $\eps'\leq\eps$, one states the following definition.
\begin{Def}
The \emph{weak polynomial entropy} $\hp^*$ of $f$ is defined as
\[
\hp^*(f):=\lim_{\eps\rit0}s_c^f(\eps)=\sup_{\eps>0}s_c^f(\eps)\in [0,\infty].
\]
\end{Def}

The relation between the polynomial entropy and the weak polynomial entropy can be made more precise. Denote by $\CC^f_{=N}(\eps)$ the subset of $\CC^f(\eps)$ of all coverings of the form $(B_{n_i}^f(x_i,\eps))_{i\in I}$ with $n_i= N$. We set
\[
\Ga^f(\eps,s,N)=\Inf\{M(C,s)\,|\,C\in\CC^f_{= N}(\eps)\}\in [0,\infty].
\]
If\,
$
\Ga^f(\eps,s)=\limsup\limits_{N\rit\infty}\Ga^f(\eps,s,N),\,
$
one has\,\,
$
\Ga^f(\eps,s)=\limsup\limits_{N\rit\infty}\frac{1}{N}G_N^f(\eps).
$\\
As before, one checks that there exists a critical value $\ov{s}_c^f(\eps)$ such that
\[
\Ga^f(\eps,s)=0\,\,\,\textrm{if}\, s> \ov{s}_c^f(\eps)\,\,\, \textrm{and}\,\,\,\Ga^f(\eps,s)=\infty\,\,\,\textrm{if}\,\,\,s< \ov{s}_c^f(\eps).
\]
Therefore
$
\hp(f)=\lim\limits_{\eps\rit0}\ov{s}_c^f(\eps)=\sup_{\eps>0}\ov{s}_c^f(\eps).
$\\
\begin{rem} The inclusion $\CC^f_{= N}(\eps)\subset\CC^f_{\geq N}(\eps)$ yields 
$
\hp^*(f)\leq \hp(f).
$
\end{rem}
Like the topological entropy, the  polynomial entropies $\hp$ and $\hp^*$ are $C^0$-conjugacy invariant and do not depend on the choice of topologically invariant metrics on $X$.
Before giving some properties of the polynomial entropies, let us emphasize the following  important fact.

\begin{rem}
When $\hto(f)>0$, the polynomial entropies  are both infinite.
\end{rem}

\subsection{Properties of the polynomial entropies.} 
We now give without proof some basic properties of the polynomial entropies. 

\begin{pte}\label{propriete} 
Here the symbol $\ha{h}$ stands indifferently for $\hp$ or $\hp^*$.
\begin{enumerate} 
\item If $A\subset X$ is invariant under $f$, $\ha{h}(f_{\vert_A})\leq \ha{h}(f)$.
\item If $(Y,d')$ is another compact metric space and if $g :Y\rit X$ is a continuous factor of $f$, that is, if there exists a continuous surjective map $\ell:X\rit X'$ such that $\ell\circ f=g\circ \ell$, $\ha{h}(g)\leq \ha{h}(f)$.
\item If $g$  is a continous map of another compact metric space $(Y,d')$, and if $X\times Y$ is endowed with the product metric, then 
\[
\ha{h}(f\times g)=\ha{h}(f)+\ha{h}(g).
\]
\item $\ha{h}(f^m) = \ha{h}(f)$ for all $m\in\N$.
\item If $A=\cup_{i=1}^n A_i$ where $A_i$ is invariant under $f$, $\ha{h}(f_{\vert_A}) = \Max_i(\ha{h}(f_{|_{A_i}}))$.
\end{enumerate}
\end{pte}

The weak polynomial entropy satisfies moreover the following property of $\sig$-union.
\begin{prop}\textbf{The $\sig$-union property for $\hp^*$.}\label{sigUC^*} 
 If $F=\cup_{i\in\N} F_i$, where $F_i$ is closed and invariant under $f$, then
\[ 
\hp^*(f_{\vert_F}) = \sup_{i\in\N}(\hp^*(f_{|_{F_i}})).
 \]
\end{prop}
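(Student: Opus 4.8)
**Proof plan for the $\sigma$-union property of $\hp^*$.**

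The plan is to reduce the statement to the already-known behaviour of $\De^f$ under countable unions, working at the level of the critical exponents $s_c^f(\eps)$. First I would establish the easy inequality $\hp^*(f_{\vert_F})\ge \sup_i \hp^*(f_{\vert_{F_i}})$: each $F_i$ is closed, hence compact, and invariant, so $f_{\vert_{F_i}}$ is a well-defined subsystem and Property \ref{propriete}(1) gives $\hp^*(f_{\vert_{F_i}})\le \hp^*(f_{\vert_F})$ for every $i$, whence the supremum is $\le \hp^*(f_{\vert_F})$.

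For the reverse inequality, fix $\eps>0$ and set $\sigma:=\sup_i \hp^*(f_{\vert_{F_i}})$. It suffices to show $s_c^{f_{\vert_F}}(\eps)\le \sigma$, i.e.\ that $\De^{f_{\vert_F}}(\eps,s)=0$ for every $s>\sigma$ (then let $\eps\to 0$). Fix such an $s$ and choose $s'$ with $\sigma<s'<s$. For each $i$ we have $s'>\hp^*(f_{\vert_{F_i}})\ge s_c^{f_{\vert_{F_i}}}(\eps/2)$, hence $\De^{f_{\vert_{F_i}}}(\eps/2,s')=0$; in particular, given any $\eta>0$ and any $N$, there is a covering $C_i\in\CC^{f_{\vert_{F_i}}}_{\ge N}(\eps/2)$ of $F_i$ with $M(C_i,s')<\eta\,2^{-i}$. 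I would then enlarge each ball $B_{n}^{f_{\vert_{F_i}}}(x,\eps/2)$ to the ball $B_{n}^{f_{\vert_F}}(x,\eps)$ of the ambient system (the dynamical balls for the restricted system are the traces on $F_i$ of those for $f_{\vert_F}$, and doubling the radius absorbs this), so that $C:=\bigcup_i C_i$ is a covering of $F=\bigcup_i F_i$ belonging to $\CC^{f_{\vert_F}}_{\ge N}(\eps)$, with $M(C,s')\le \sum_i M(C_i,s')<\eta$. Since $\eta>0$ and $N$ were arbitrary, $\De^{f_{\vert_F}}(\eps,s')=0$, and a fortiori $\De^{f_{\vert_F}}(\eps,s)=0$ because $M(C,s)\le M(C,s')$ when the radii $n_i\ge N\ge 1$ and $s\ge s'$. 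Letting $\eps\to 0$ gives $\hp^*(f_{\vert_F})\le\sigma$.

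The one point that requires genuine care — and which I expect to be the main obstacle — is the passage from the $\De$-infima over $F_i$ to a $\De$-infimum over $F$: one must check that concatenating countably many admissible coverings, each cheap in its own $M(\cdot,s')$, still yields an \emph{admissible} covering of $F$ for the ambient dynamical metric, and that the $M$-values genuinely add (here the freedom to count a ball with multiplicity is harmless, only helping the estimate). This is exactly where closedness of the $F_i$ (so each is compact and the infima defining $\De^{f_{\vert_{F_i}}}$ are meaningful) and the geometric summability $\sum_i \eta 2^{-i}=\eta$ are used; it is the analytic heart of why $\hp^*$, unlike $\hp$, enjoys a \emph{countable} union property rather than only a finite one.
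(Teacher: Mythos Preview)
The paper does not give its own proof of this proposition: it is stated in Section~3.2 among the ``basic properties of the polynomial entropies'' that are explicitly given \emph{without proof} (with a reference to \cite{Mar-09} for details). So there is no argument in the paper to compare yours against.

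Your proof is correct and is the standard one. The essential point --- that the Carath\'eodory-type definition of $\De^f(\eps,s)$ as an infimum over coverings by balls with \emph{arbitrary} parameters $n_i\ge N$ lets one concatenate countably many cheap coverings and sum their $M$-values geometrically --- is exactly the mechanism that gives $\hp^*$ countable stability while $\hp$ only enjoys finite stability. Two minor simplifications are available. First, the radius doubling from $\eps/2$ to $\eps$ is unnecessary: since the metric on $F_i$ is just the restriction of the metric on $F$, one has $B_n^{f_{|_{F_i}}}(x,\eps)=F_i\cap B_n^{f_{|_F}}(x,\eps)$ exactly, so you may replace each ball by the ambient ball of the \emph{same} radius. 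Second, the intermediate exponent $s'$ is superfluous: for any $s>\sigma$ one already has $s>s_c^{f_{|_{F_i}}}(\eps)$ for every $i$, hence $\De^{f_{|_{F_i}}}(\eps,s)=0$, and the argument runs directly at exponent $s$.
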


\subsubsection{Polynomial entropies for flows.} 
Let us now state briefly the definition of the polynomial entropies for flows.

For each $t\geq 1$, we denote by $\CC^\phi_{\geq t}(\eps)$ the set of coverings of $X$ of the form $C=(B_{\tau_i}(x_i,\eps))_{i\in I}$ with $\tau_i\geq t$. For such a covering $C$, we set $M(C,s)=\sum_{i\in I}\frac{1}{\tau_i^s}$ for $s\geq 0$. Finally we introduce the quantity
\[
\de^\phi(\eps,s,t)=\Inf\{M(C,s)\,|\,\in \CC^\phi_{\geq t}(\eps)\}
\]
which is monotone nondecreasing with $t$. We set $\De^\phi(\eps,s)=\lim_{t\to\infty}\de^\phi(\eps,s,t)$. As in the discrete case, one sees that there exists a unique $s_c^\phi(\eps)$ such that $\De^\phi(\eps,s)=0$ if $s>s_c^\phi(\eps)$ and $\De^\phi(\eps,s)=+\infty$ if $s<s_c^\phi(\eps)$.
The weak polynomial entropy for the continuous system $\phi$ is defined as 
\[
\hp^*(\phi)=\lim_{\eps\to 0}s_c^\phi(\eps)=\sup_{\eps>0}s_c^\phi(\eps).
\]
It turns out that $\hp^*(\phi)=\hp^*(\phi_1)$.

%

We denote by $G_t^\phi(\eps)$ the minimal number of $d_t^\phi$-balls of radius $\eps$ in a covering of $X$, and we set 
\[
\hp(\phi)=\sup_{\eps>0}\limsup_{t\to\infty}\frac{\Log G_t^\phi(\eps)}{\Log t}=\Inf\left\{\sig\geq 0\,|\, \lim_{t\to\infty}\frac{1}{t^\sig}G_t^\phi(\eps)=0\right\}.
\]
As before, one has $\hp(\phi)=\hp(\phi_1)$.

\subsection{Polynomial entropies and Hamiltonian systems.}
The polynomial entropies are particulary relevant for the study of Hamiltonian systems. The first remarkable fact is that for Hamiltonian systems in action-angle form the weak and the strong polynomial entropies do coincide as seen in the following proposition.

%

\begin{prop}\label{Csubman}  
Let $H:(\al,I)\ma h(I)$ be a $C^2$ Hamiltonian function on  $T^*\T^n$. Let $\Ss$ be a compact submanifold of $\R^n$, possibly with boundary. Then the compact $\T^n\times\Ss$ is invariant under the flow $\phi$ and one has
\[
\hp(\phi_{|_{\T^n\times\Ss}})=\hp^*(\phi_{|_{\T^n\times\Ss}})=\max_{I\in \Ss} \rk \om(I),
\]
where $\om :\Ss\rit \R^n, I\mapsto d_I(h_{|_{\Ss}})$.
\end{prop}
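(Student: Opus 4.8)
The plan is to reduce the statement to a product computation using the properties of polynomial entropy listed in Property~\ref{propriete}, and then to estimate the growth of $d_t^\phi$-balls directly on each fiber. First I would observe that since $H$ depends only on $I$, the flow is $\phi_t(\al,I)=(\al+t\,\om(I)\,[\Z^n],I)$ where here I abuse notation and write $\om(I)=\nabla h(I)$, but what matters for the dynamics restricted to $\T^n\times\Ss$ is only the differential of $h_{|_\Ss}$ along $\Ss$, since motion transverse to $\Ss$ is frozen. So after fixing $I_0\in\Ss$, the relevant quantity is $\rk\om(I_0)=\rk d_{I_0}(h_{|_\Ss})$; call its maximum over $\Ss$ the number $r$. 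I would first treat the ``homogeneous'' model case where $\Ss$ is a neighborhood of $I_0$ in an $r$-dimensional affine subspace and $\om$ has constant rank $r$ there: up to a linear symplectic change of coordinates one may assume $h(I)=\langle v,I\rangle$ is linear with $v$ spanning an $r$-dimensional space, so the flow is $(\al,I)\mapsto(\al+t\,\Pi I,I)$ for a suitable linear surjection $\Pi$ onto $\R^r\subset\R^n$ (after identifying). On such a piece the dynamics is conjugate, up to a compact factor with zero entropy, to the map on $\T^r\times[0,1]^r$ given by $(\theta,J)\mapsto(\theta+tJ,J)$, whose polynomial entropy is classically $r$ (this is the standard ``linear flow with drift'' computation: two points with $J,J'$ differing by $\eps/t$ in each of the $r$ directions stay $\eps$-close up to time $t$, forcing $G_t^\phi(\eps)\asymp t^r$, and conversely a grid of mesh $\sim\eps/t$ in the $J$-directions and $\sim\eps$ in the $\theta$-directions suffices). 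Both $\hp$ and $\hp^*$ give $r$ here since the growth is genuinely polynomial of a fixed degree (so $\Ga^\phi(\eps,s,N)$ and $\De^\phi(\eps,s,N)$ have the same critical exponent).

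Next I would handle the general $\Ss$. The upper bound $\hp^*\le\hp\le r$: cover $\Ss$ by finitely many small charts; on each chart, $h_{|_\Ss}$ has differential of rank at most $r$, so in suitable coordinates $(J_1,\dots,J_n)$ on the chart (with $J_1,\dots,J_k$ tangent to $\Ss$, $k=\dim\Ss$) the map $I\mapsto \om(I)\in\R^n$ factors, to first order and with bounded error, through a projection to at most $r$ coordinates; a direct counting of $d_t^\phi$-balls then gives $G_t^\phi(\eps)=O(t^r)$, hence $\hp\le r$ by the formula $\hp(\phi)=\sup_\eps\limsup_t \Log G_t^\phi(\eps)/\Log t$, and $\hp^*\le\hp$ by Remark after the definitions. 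For the lower bound $\hp^*\ge r$: pick $I_0\in\Ss$ with $\rk d_{I_0}(h_{|_\Ss})=r$; by the constant-rank / submersion theorem there is a small disk $\Ss'\subset\Ss$ through $I_0$ on which the rank is exactly $r$ and $h_{|_{\Ss'}}$ submerses onto an $r$-dimensional image with, in good coordinates, $\om$ dominating a projection onto $\R^r$. On $\T^n\times\Ss'$ the flow contains, as a factor in the sense of Property~\ref{propriete}(2) (project $\T^n\to\T^r$ onto the angles that actually rotate), the model map above, so $\hp^*(\phi_{|_{\T^n\times\Ss'}})\ge r$; and by Property~\ref{propriete}(1), $\hp^*(\phi_{|_{\T^n\times\Ss}})\ge\hp^*(\phi_{|_{\T^n\times\Ss'}})\ge r$. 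Combining, $\hp^*=\hp=r=\max_{I\in\Ss}\rk\om(I)$.

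I expect the main obstacle to be the uniform control needed in the upper bound when $\rk\om$ is not constant on $\Ss$: one must cover $\Ss$ by charts in which the ``fast'' and ``slow'' directions of the drift are separated, and bound the number of $d_t^\phi$-balls uniformly over the chart, using $C^2$-regularity of $h$ to keep the nonlinear remainder in $\om$ controlled on a time scale $\le t$ after shrinking to balls of radius $O(\eps/t)$ in the action directions. The subtle point is that $\om$ may have rank jumping up along lower-dimensional subsets of $\Ss$, but the contribution of those subsets to $G_t^\phi(\eps)$ is of lower order in their own fiber dimension, while a full-dimensional neighborhood where the rank is $\le r-1$ would contribute only $O(t^{r-1})$; finitely many charts then assemble via Property~\ref{propriete}(5) (for $\hp$, using $\Max_i$) and Proposition~\ref{sigUC^*} (for $\hp^*$, the $\sigma$-union), so the max over charts of the local exponents — which is exactly $\max_{I\in\Ss}\rk\om(I)$ — is the answer. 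Once the model computation and this uniformity are in hand, everything else is bookkeeping with the invariance and product properties already recorded.
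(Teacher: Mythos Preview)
Your lower-bound route (restrict to a small disk $\Ss'\subset\Ss$ where $\rk d\om=r$, then exhibit a factor onto a linear model on $\T^r\times[0,1]^r$) is sound in spirit and is close to what the paper does implicitly: one needs the rank to be exactly $r$ on a subset of positive measure. However, your justification that the model has $\hp^*=r$ because ``the growth is genuinely polynomial of a fixed degree'' is not an argument for $\hp^*$: a lower bound on $G_N(\eps)$ says nothing, a priori, about coverings by balls with \emph{variable} radii $n_i\geq N$. The paper handles this directly, without passing through a model: it computes the fibered structure of a single dynamical ball $B_N^\vp((\al,I),\eps)$ over $B(\al,\eps)$, observes that the fiber $F_0$ over $\al=0$ is contained in $\om^{-1}(\text{box of side }2\eps/(N-1))\cap B(I,\eps)$, and uses $\rk d\om=\ell$ to bound $\mathrm{Vol}(F_0)\leq c\,(2\eps/(N-1))^\ell$. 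Summing the fiber volumes over any covering forces $M(C,s)\to\infty$ for $s<\ell$. This volume argument is exactly the missing ingredient in your sketch.

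Your upper-bound strategy has a genuine gap. You claim that on each small chart the map $I\mapsto\om(I)$ ``factors, to first order and with bounded error, through a projection to at most $r$ coordinates''. This is false in general when $\rk d\om$ is not constant on the chart: the $r$-dimensional image of $d_{I_0}\om$ depends on $I_0$, and there is no single $r$-dimensional projection through which $\om$ factors, even approximately, over the whole chart. Your proposed fix via stratification by rank does not give the bound for $\hp$ either, since the strata are neither closed nor finitely many in a way compatible with Property~\ref{propriete}(5). The paper sidesteps all of this by arguing globally with the \emph{image} $\Om=\om(\Ss)$: two points $(\al,I),(\al',I')$ lie in the same $d_N^\vp$-ball as soon as $\norm{\al-\al'}<\eps/2$, $\norm{I-I'}<\eps$, and $\norm{\om(I)-\om(I')}<\eps/(2N)$; hence $G_N^\vp(\eps)$ is bounded by the product of fixed covers of $\T^n$ and $\Ss$ with a minimal cover of $\Om$ by $(\eps/2N)$-balls. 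Since $\om$ is a $C^1$ map of rank $\leq \ell$, the ball dimension of $\Om$ is at most $\ell$, so this last cover has cardinality $\leq (2N/\eps)^{\ell'}$ for any $\ell'>\ell$, giving $\hp\leq\ell$. In short, the key object is the image $\om(\Ss)$ and its covering numbers at scale $\eps/N$, not a chart-by-chart factorization; once you switch to that viewpoint your outline collapses onto the paper's proof.
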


\begin{rem}\label{entpolpourhconv}
If $h$ is strictly convex and if $\Ss$ is a compact energy level $\Ss=h^{-1}(\{e\})$, one gets
$
\hp(\phi_{|_{\T^n\times\Ss}})=\hp^*(\phi_{|_{\T^n\times\Ss}})=n-1.
$
\end{rem}

\begin{proof} 
Recall that given a compact metric space $(X,d)$, the ball dimension $D(X)$ is by definition
\[
D(X):=\limsup_{\eps\to 0}\frac{\Log c(\eps)}{\abs{\Log\eps}}
\]
where $c(\eps)$ is the minimal cardinality of a covering of $X$ by $\eps$-balls. We will use the fact that the ball 
dimension of a compact manifold is equal to its usual dimension and that the ball dimension of the image of a compact 
manifold by $C^1$ map of rank $\ell$ is $\leq \ell$.

\vskip2mm
We endow $\R^n$ with the product metric defined by the $\max$ norm $\norm{\ }$ and the submaniflod $\Ss$ with the induced
metric. We endow the torus $\T^n$ with the quotient metric. As the pairs of points  $(\al,\al')$ of $\T^n\times\T^n$ we will have to consider are close enough to one another, we still denote by $\norm{\al-\al'}$ their distance. Finally
we endow the product $\T^n\times \Ss$ with the product metric of the previous ones.

Assume that $\rk \om=\ell$ and denote by $\Om$ the image $\om(\Ss)$. Let us denote by simply by $\phi$ the flow $\vp:=\phi_{|_{\T^n\times\Ss}}$.
\vskip2mm

We will first prove that $\hp(\vp)\leq \ell$. Let $\eps>0$. Remark that, for $N\geq 1$, if two points $(\al,I)$ and $(\al',I')$ of $\T^n\times \Ss$ satisfy 
\begin{equation}\label{eq:ball}
\norm{\al-\al'}< \frac{\eps}{2},\qquad 
\norm{\om(I)-\om(I')}\leq \frac{\eps}{2N},\qquad
\norm{I-I'}<\eps
\end{equation}
then $d_N^\vp\big( (\al,I),(\al',I'))< \eps$. Let us introduce the following coverings :

\vskip1mm $\bullet$   a minimal covering $C_{\T^n}$ of $\T^n$ by balls of radius $\eps/2$, so its cardinality
$i^*$ depends only on $\eps$;

\vskip1mm $\bullet$  a minimal covering  $(\ha B_j)_{1\leq j\leq j^*}$  of $\Ss$ by balls of radius $\eps/2$, so again 
$j^*$ depends only on $\eps$;

\vskip1mm $\bullet$ for $N\geq1$, a  minimal covering  $(\wti B_k)_{1\leq k\leq k^*_N}$ of the image $\Om$ by balls of radius
$\eps/(2N)$. 

\vskip1mm

The last two coverings form a covering $C_{\Ss}=(\ha {B}_j\cap\om\inv\wti (B_k))_{j,k}$ of $\Ss$ such that any two points $I,I'$ in the same set $\ha B_j\cap\om\inv(\wti B_k)$ satisfy the last two conditions of (\ref{eq:ball}). 
Hence we get a covering of $\T^n\times\Ss$ whose elements are contained in balls of $d_N^\vp$-radius $\eps$ by considering the products of the elements of $C_{\T^n}$ and $C_{\Ss}$.
\vskip1mm

Note that, since the ball dimension of $\Om$ is less or equal to $\ell$, given any $\ell'>\,\ell$, for $N$ large enough, $k^*_N\leq (2N/\eps)^{\ell'}$ 
Thus:
\[
G_N^\vp(\eps)
\leq i^*\,j^*\,k^*_N
\leq c(\eps) N^{\ell'}
\]
and $\ov s_c^{\vp}(\eps)\leq \ell'$. Since $\ell'>\,\ell$ is arbitrary $\hp(\vp)\leq \ell$.
Now it suffices to prove that $\hp^*(\phi)\geq \ell$. 
For $(\al,I)$ in $\T^n\times\Ss$ and $\eps>0$, we set
\begin{itemize}
\item $B(I,\eps)\subset \Ss$ the ball with respect to the induced metric of $\R^n$ on $\Ss$,
\item $B((\al,I),\eps)\subset \T^n\times\Ss$, the ball with respect to the product metric defined above,
\item $B_N^\vp((\al,I),\eps)\subset \T^n\times\Ss$ the ball with respect to the metric $d_N^\vp$ on $\T^n\times\Ss$.
\end{itemize}
Then if $(\al',I')$ in $\T^n\times\Ss$, $(\al',I')\in B_N^\vp((\al,I),\eps)$ if and only if
\[
\norm{I'-I}<\eps,\quad  \norm{k(\om(I')-\om(I))+(\al'-\al)}<\eps,\quad \forall k\in\{0,\ldots, N-1\}.
\]
Writing the various vectors in component form, one gets for $1\leq i\leq n$:
\[
\abs{\al'_i-\al_i}<\eps,\quad
\om_i(I')\in \Big]\frac{(\al_i-\al'_i)-\eps}{N-1}+\om_i(I),\frac{(\al_i-\al'_i)+\eps}{N-1}+\om_i(I)\Big[.
\]
Thus $B_N^\vp((\al,I),\eps)$ has the following fibered structure over the ball $B(\al,\eps)$:
\[
B_N((\al,I),\eps)=\bigcup_{\al'\in B(\al,\eps)}\{\al'\}\times F_{\al'},
\]
where the fiber over the point $\al'$ is the curved polytope
\[
F_{\al'}= \om\inv\Big(\prod_{1\leq i\leq n}\Big]\frac{(\al_i-\al'_i)-\eps}{N-1}
 +\om_i(I),\frac{(\al_i-\al'_i)+\eps}{N-1}+\om_i(I)\Big[\Big)\bigcap B(I,\eps).
\]

Fix now a covering $C=(B_{n_i}((\al_i,I_i),\eps))_{i\in I}$ of $\CC_{\geq N}(\T^n\times \Ss)$, and denote by $F_0^{i}$ the fiber of $\al=0$ in the ball $B_{n_i}((\al_i,I_i),\eps)$ (which may be empty).
Then the set $\{0\}\times \Ss$ is contained in the union of the fibers $F_0^i$. Let $\nu$ the Lebesgue volume of this set.

Since $\rk\om=\ell$, there exists a constant $c>0$ such that the Lebesgue volume of  the fiber $F_0^i$ satisfies
\[
{\rm Vol\,}(F_0^i)\leq c \Big(\frac{2\eps}{n_i-1}\Big)^{\ell}.
\]
The sum of the volumes of the fibers must be larger than $\nu$, so
\[
\sum_{i\in I}c \Big(\frac{2\eps}{n_i-1}\Big)^{\ell}\geq \nu
\]
Then, if $s<\ell$
\[
M(C,s)=\sum_{i\in I}\frac{1}{n_i^s}= \frac{1}{c(2\eps)^{\ell}}\sum_{i\in I}c\Big(\frac{2\eps}{n_i-1}\Big)^{\ell}
\frac{(n_i-1)^{\ell}}{n_i^s}\geq \frac{\nu}{c(2\eps)^{\ell}}\frac{1}{2^{\ell}} N^{\ell-s}
\]
and therefore
\[
\De(\T^{n}\times \Ss,\eps,s)=\lim_{N\to\infty}\de(\T^n\times  \Ss,\eps,s,N)=+\infty. 
\]
Thus $s_c^\vp(\T^n\times  \Ss,\eps)\geq \ell$, and hence $\hp^*(\vp)\geq \ell$.
\end{proof}

\section{The weak polynomial entropy $\hw$}

This section is devoted to the proof of the following theorem.

\begin{thm}\label{thmhw} 
Let $(\EE,\phi_H,f)$ be a dynamically coherent system. Then 
\[
\hw(\phi_H)\in\{0,1\}.
\]
\end{thm}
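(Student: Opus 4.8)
The strategy is to decompose $\EE$ into finitely many invariant pieces and to control the weak polynomial entropy on each. By Property~\ref{propriete}(5), $\hw(\phi_H)=\max_i\hw(\phi_{H|_{A_i}})$ over such a decomposition, so it suffices to show that each piece contributes at most $1$. The natural decomposition comes from the Bott structure: the closure of the regular set $\ha\RR$ is a finite union of (closures of) maximal action-angle domains $\Aa$, and the remaining set is a finite union of critical circles (elliptic or hyperbolic), critical tori, Klein bottles, and $\8$-levels. We may work with a fixed compatible metric on $\EE$ since $\hw$ does not depend on the choice (it is a $C^0$-conjugacy invariant). We also use $\hw(\phi_H)=\hw(\phi_{H,1})$, so we may reason with the time-one map throughout, and use invariance of $\hw$ under finite powers and two-sheeted coverings (via Property~\ref{propriete}(2), since a covering realizes the base as a factor) to reduce Klein bottles to critical tori and nonorientable $\8$-levels to orientable ones, as indicated in Remark~\ref{Klein} and the discussion of $\8$-levels.

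First I would handle the ``easy'' pieces. On a critical circle the flow is a rotation, so $G_N$ is bounded and $\hw=0$. On a maximal action-angle domain $\Aa\cong\T^2\times\,]0,1[$, or rather on a compact invariant sub-box $\T^2\times\Ss$ with $\Ss$ a compact subinterval, Proposition~\ref{Csubman} gives $\hw=\hp=\max_{I\in\Ss}\rk\om(I)$; since $\Ss$ is one-dimensional, $\rk\om\le 1$, hence $\hw\le 1$ on each such box. To pass from the compact sub-boxes to all of $\Aa$ and then to $\ha\RR\cup(\text{critical tori})$, I would exhaust $\Aa$ by an increasing sequence of such compact invariant boxes and invoke the $\sigma$-union property (Proposition~\ref{sigUC^*}), which is exactly what makes the weak polynomial entropy, rather than the strong one, behave well here: $\hw$ of a countable increasing union of closed invariant sets is the sup of the pieces. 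The same covering-by-action-angle-domains argument handles a neighborhood of a critical torus via Proposition~\ref{aaTorecritique}, and a neighborhood of an elliptic orbit via Corollary~\ref{aaelliptic2}, where $U\setminus\Cc$ is again an action-angle domain of the same type (a compact energy box inside $\T^2\times O$ with $J$-coordinate ranging over a compact interval), so again $\rk\om\le 1$ on it; the orbit $\Cc$ itself contributes $0$, and one more application of the $\sigma$-union property glues $\Cc$ back in.

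The main obstacle is the $\8$-level (equivalently, a simple polycycle), where the dynamics is genuinely hyperbolic-looking near the critical circles even though $\hto=0$. Using the coordinates of Proposition~\ref{coorhype} and Corollary~\ref{coorhype2}, near a hyperbolic orbit $\Cc\ke$ the restricted flow on a $3$-dimensional energy slice is, in coordinates $(\vp_k,q_k,p_k)$, a rotation in $\vp_k$ coupled to a linear saddle $\dot q_k=-\lambda q_k$, $\dot p_k=\lambda p_k$ (with $\lambda=\partial H/\partial J$, which is nonzero by the proposition). Here I would estimate $G_N^\phi(\eps)$ directly: the set that stays $\eps$-close under the dynamical metric $d_N$ is, in the $(q_k,p_k)$-plane, a ``bowtie'' whose area shrinks like $e^{-\lambda N}$ while its $\vp_k$-extent shrinks like $1/N$ — but the exponential shrinkage in the saddle directions means one needs only $O(N)$, not exponentially many, balls to cover a fixed neighborhood of the polycycle (the exponential contraction in one direction compensates the exponential expansion in the other, as for any hyperbolic fixed point with zero topological entropy in low dimension). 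Concretely, I would cover the polycycle's neighborhood by a bounded number of ``corner'' boxes near the critical circles plus transition boxes along the separatrix cylinders, show each contributes $G_N=O(N)$ balls, and conclude $\hw\le\hp\le 1$ there; then $\sigma$-union over the finitely many polycycles and the rest finishes the bound $\hw(\phi_H)\le 1$, whence $\hw(\phi_H)\in\{0,1\}$.
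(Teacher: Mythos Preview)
Your treatment of the action-angle domains, critical tori, Klein bottles and elliptic orbits is essentially the paper's: exhaust each maximal action-angle domain by compact sub-boxes $\T^2\times\Ss$, apply Proposition~\ref{Csubman} (which gives $\hw\le\rk\om\le1$ since $\Ss$ is one-dimensional), and glue via the $\sigma$-union property; the elliptic orbit itself is a rotation, hence contributes $0$.

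The gap is in your handling of the $\8$-level. You speak of covering a \emph{neighborhood} of the polycycle and claim $G_N=O(N)$ there; this is false. The paper's Section~5 shows that on any partial neighborhood of a simple polycycle $\hp(\phi_H)=2$, so your direct covering heuristic cannot give $O(N)$ on a $3$-dimensional neighborhood. The point is that a neighborhood of $\PP$ need not be treated at all: its complement in $\PP$ is already contained in the open action-angle domains handled above, so only the $2$-dimensional invariant set $\PP$ itself remains. On $\PP$ the dynamics is not a saddle in a $(q,p)$-plane; rather $\PP\cap\UU_k=\{p_kq_k=0\}$ splits into separatrix cylinders on which the transverse dynamics is purely contracting toward $\Cc$ (in forward time). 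The paper exploits this via the notion of a \emph{contracting fibered structure} (Definition~\ref{def:contract} and Proposition~\ref{prop:contract}): a small forward-invariant piece $D_i\subset W^s(\Cc)$ of each separatrix component is metrically fibered over $\Cc$ with contracting fibers, whence $\hp(\phi_H,D_i)=\hp(\phi_H,\Cc)=0$. One then writes $W_i=\bigcup_{n\ge0}(\phi_H^1)^{-n}(D_i)$ and applies the $\sigma$-union property once more to get $\hw(\phi_H,W_i)=0$ and finally $\hw(\phi_H,\PP)=0$. Your sketch misses this mechanism; the ``compensating expansion/contraction'' intuition does not substitute for it, and in fact leads to the wrong bound on any genuine neighborhood.
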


Before giving the proof we recall the notion of maps with contracting fibered structure that has been introduced in \cite{Mar-93}.

\begin{Def}\label{def:contract}
Let $(E,d_E)$, $(X,d_X)$ be compact metric spaces and consider two continuous maps  $\vp:E\to E$ and $\psi:X\to X$. 
We say that $(E,\vp)$ has a contracting fibered structure over $(X,\psi)$ when the following conditions hold true.
\vskip1mm
(i) $E$ is metrically fibered over $X$ : there exists a surjective continous map $\pi:E\to X$, a metric space $(F,d_F)$
 and a finite open covering $(U_i)_{1\leq i\leq m}$ of $X$ such that for each $i$ there exists an isometry 
$\phi_i:\pi\inv(U_i)\to U_i\times F$ (this latter space being equipped with the product metric). 
We write $\phi_i(z)=(\pi_i(z),\varpi_i(z))\in U_i\times F$.
 \vskip1mm
(ii) $(X,\psi)$ is a factor of $(E,\vp)$ relative to $\pi$: $\psi\circ \pi=\pi\circ \vp$.
 \vskip1mm
(iii) If $z,z'$ are two points of $E$ such that there exists $i$ and $j$ in $\{1,\ldots,n\}$ such that $z,z'\in\pi_i\inv(U_i)$
and $\phi(z),\phi(z')\in\pi_j\inv(U_j)$, then 
\[
d_F\big(\varpi_j(\vp(z)),\varpi_j(\vp(z'))\big)\leq d_F\big(\varpi_i(z),\varpi_i(z')\big).
\]
\end{Def}

A simple example of a map with contracting fibered structure is the one of a diffeomorphism $\vp$ of a manifold $M$ that admits a compact invariant manifold $N$ which is normally hyperbolic: then its stable manifold $W^+(N)$ admits an invariant foliation by the stable manifolds of the points of $N$, and there exists a projection $\pi$ from a neighborhood $E$ of $N$ in $W^+(N)$ to $N$ which  associate with each point $x$ the unique point $a\in N$ such that $x\in W^+(a)$. It is not dificult to see that one can choose a Riemanniann metric on $M$ and the neighborhood $E$ is such a way that $E$ is invariant under $\vp$ and $(E,\vp)$ admits a contracting fibered structure over $(N,\vp_N)$.

\begin{prop}\label{prop:contract} Let $(E,d_E)$, $(X,d_X)$ be metric spaces, and $\vp:E\to E$, $\psi:X\to X$ be continuous maps,
such that $(E,\vp)$ admits a contracting fibered structure over $(X,\psi)$.
Then 
\[
\hp(\vp)=\hp(\psi).
\]
\end{prop}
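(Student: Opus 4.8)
The plan is to show the two inequalities $\hp(\vp)\leq \hp(\psi)$ and $\hp(\psi)\leq \hp(\vp)$ separately. The second one is essentially immediate from the general properties already recorded: since $(X,\psi)$ is a factor of $(E,\vp)$ relative to the surjective continuous map $\pi$ (condition (ii)), Property \ref{propriete}(2) gives $\hp(\psi)\leq \hp(\vp)$. So the whole content is in the reverse inequality, which is where the \emph{contracting} hypothesis (iii) enters.

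For $\hp(\vp)\leq \hp(\psi)$, I would estimate the dynamical balls of $\vp$ by lifting an efficient covering of $X$ by dynamical balls of $\psi$ and splitting each fiber by a single (non-dynamical) covering of $F$. Fix $\eps>0$ and pick, via the finite covering $(U_i)$, a Lebesgue number and work at a scale small enough that a ball of radius $\eps$ in $X$ (resp.\ $\eps$ in $F$) sits inside one chart. Take a minimal covering of $X$ by $d_n^\psi$-balls of radius $\eps/2$ and a minimal (static) covering of the compact fiber $F$ by $\eps$-balls; call their cardinalities $G_n^\psi(\eps/2)$ and $L(\eps)$, the latter independent of $n$. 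The key claim is that the preimages under $\pi$ (read in the local trivializations) of a $d_n^\psi$-ball, intersected with the $\pi$-preimage of an $\eps$-fiber-ball, are contained in $d_n^\vp$-balls of radius $\sim\eps$. This is exactly what (iii) buys: if two points $z,z'$ have $\pi(z),\pi(z')$ staying $\eps/2$-close under the first $n$ iterates of $\psi$, then their fiber components $\varpi(z),\varpi(z')$ stay within their \emph{initial} fiber distance under the first $n$ iterates of $\vp$ (applying (iii) inductively along the orbit), hence stay $\eps$-close throughout; combined with the $X$-closeness this controls $d_E(\vp^k z,\vp^k z')$ for all $k\leq n-1$. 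Therefore
\[
G_n^\vp(C\eps)\leq G_n^\psi(\eps/2)\cdot L(\eps),
\]
and taking $\Log(\cdot)/\Log n$ and $\limsup_{n\to\infty}$ kills the constant factor $L(\eps)$, so $\ov s_c^\vp(C\eps)\leq \ov s_c^\psi(\eps/2)$; letting $\eps\to 0$ yields $\hp(\vp)\leq \hp(\psi)$.

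The main obstacle is the bookkeeping forced by the fact that the trivialization is only local: a dynamical ball of $\psi$ of radius $\eps/2$ need not lie in a single $U_i$, and as the orbit of a point moves it will pass between different charts $U_i,U_j$, so one must check that the monotonicity in (iii), which is stated precisely for pairs of points lying in a common chart at time $0$ and a common chart at time $1$, can be chained along an orbit whose successive points hop between charts. The clean way around this is to first pass to the iterate: by Property \ref{propriete}(4), $\hp(\vp)=\hp(\vp^m)$ and $\hp(\psi)=\hp(\psi^m)$, and one can choose $\eps$ small and refine the covering so that any $\eps$-orbit segment of length $m$ stays in a single $U_i$, so $(E,\vp^m)$ again has a contracting fibered structure with a trivializing cover adapted to $\vp^m$-orbits; alternatively, bound everything directly using a uniform modulus-of-continuity/Lebesgue-number argument across the finite atlas. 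Once the chaining is justified, the counting estimate above goes through verbatim and the proposition follows.
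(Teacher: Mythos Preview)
Your overall strategy is exactly that of the paper: the inequality $\hp(\psi)\leq\hp(\vp)$ is the factor property, and the reverse one comes from lifting a minimal $d_N^\psi$--covering of $X$ and taking the product with a single static covering of the fiber $F$, using condition~(iii) inductively along the orbit to freeze the fiber separation. The final estimate $G_N^\vp(2\eps)\leq m\cdot G_N^\psi(\eps)$ (with $m$ the cardinality of the fiber covering) is precisely what the paper obtains.

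Where you diverge from the paper is in your handling of the ``obstacle''. You worry that a $d_N^\psi$--ball of radius $\eps/2$ need not sit in a single chart, and you propose passing to an iterate $\vp^m$ to force orbit segments to stay in one $U_i$. The first worry is unfounded and the proposed fix is both unnecessary and does not work as stated: there is no reason an orbit segment $x,\psi(x),\ldots,\psi^{m-1}(x)$ should remain in a single $U_i$ (think of a rotation on $\T$ with a cover by small arcs), so one cannot verify condition~(iii) for $\vp^m$ this way. The paper's argument is simpler and is essentially your ``alternatively'' option made precise. Let $\eps_0$ be a Lebesgue number of $(U_i)$ and take $\eps<\eps_0/2$. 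If $B^X$ is a $d_N^\psi$--ball of radius $\eps$, then \emph{for every} $k\in\{0,\ldots,N\}$ the set $\psi^k(B^X)$ has $d_X$--diameter $<2\eps<\eps_0$ (this is exactly what the dynamical metric controls), hence lies in some chart $U_{i_k}$. So at each step both points $\vp^k(z),\vp^k(z')$ live in a common chart, condition~(iii) applies directly, and induction on $k$ gives the fiber contraction without any iteration trick. Once you see this, the bookkeeping disappears and your counting estimate goes through verbatim.
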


\begin{proof} We already know that $\hp(\vp)\geq \hp(\psi)$ by the factor property.
To prove the converse inequality, consider a finite open covering $(U_i)_{i\in I}$ of $X$ adapted to the fibered structure
and let $\eps_0>0$ be the Lebesgue number of this covering (so each set of diameter less than $\eps_0$ for $d_X$
is contained in one of the $U_i$). 

Let now $N\geq1$ be fixed, choose $\eps<\eps_0/2$ and consider a ball $B^X\subset X$ of  $d_N^\psi$--radius less than 
$\eps$. In particular, $B^X$ has diameter less than $\eps_0$ (for $\de$), so $B^X$ is contained in an element $U_{i_0}$ of the covering. Consider then a ball $B^F$ of radius $\eps$, in the fiber $(F,d_F)$.
As $B^X\subset U_{i_0}$, one can define the set
\[
P=\phi_{i_0}\inv \big(B^X\times B^F\big).
\]
We want to prove that $P$ has diameter less than $2\eps$ for the distance $d_N^\vp$.

For $z,z'$ in $P$, let $x=\pi_{i_0}(z)$ and $x'=\pi_{i_0}(x')$, so $x$ and $x'$ lie in $B^X$.
Note that for $0\leq k\leq N$, $\psi^k(B^X)$ has diameter less then $\eps_0$ and so is contained in some open set $U_{i_k}$ of the covering.
So, for $0\leq k\leq N-1$, the fibered structure yields the equality:
\[
d_E(\vp^k(z),\vp^k(z'))=\Max \Big(d_X\big(\psi^k(x), \psi^k(x')\big),
d_F\big(\varpi_{i_k}(\vp^k(z)), \varpi_{i_k}(\vp^k(z'))\big)\Big).
\]
Now by induction, using the inclusion $\psi^k(B^X)\subset U_{i_k}$:
\[
d_F\Big(\varpi_{i_k}(\vp^k(z)), \varpi_{i_k}(\vp^k(z'))\Big)
\leq d_F\Big(\varpi_{i_0}(z), \varpi_{i_0}(z')\Big)<2\eps
\]
and on the other hand
$
d_X\big(\psi^k(x), \psi^k(x')\big)<2\eps
$
since $x,x'\in B^X$, so 
\[
d_E(\vp^k(z),\vp^k(z'))<\eps.
\]
This proves that $P$ has $d_N^\vp$--diameter less than $2\eps$. We denote by $P(B^X,B^F)$ this set.

We now fix a minimal covering $B_1^X,\cdots,B_n^X)$ of $X$ by balls of radius $\eps$ for $d_N^\psi$, and a finite covering $B_1^F\cdots,B_m^F)$ of the fiber $F$ by balls of radius $\eps$ for $d_F$. To each pair $(B_i^X,B_j^F)$, we associate the subset $P_{ij}=P(B_i^X,B_j^F)$ of $E$. It is easy to see that $(P_{ij})_{1\leq i\leq n, 1\leq j\leq m}$ is a covering of $E$ by subsets of diameter less than $2\eps$ for $d_N^\vp$. Then
\[
G_N^{\vp}(2\eps)\leq m\cdot G_N^\psi(\eps),
\]
which yields $\hp(\vp)\leq\hp(\psi)$.
\end{proof}

We are now in position to give the proof of the theorem.

\begin{proof}[proof of theorem \ref{thmhw} ] 
We assume without loss of generality that $H(\EE)=0$.
For two values $a<b$ of $f$, we denote by $cc(f\inv(]a,b[))$ (resp $cc(f\inv(\{a\}))$) a connected component of $f\inv(]a,b[)$  (resp $f\inv(\{a\})$). Such a domain is obvioulsy invariant by $\phi_H$. 
The strategy of the proof consists in choosing a suitable finite covering of $\EE$, by such domains $D$ and to compute $\hw(\phi_H,D):=\hw((\phi_H)_{|_D})$ for each of them. 
We choose the domains such that only the following four different cases occur:
\begin{enumerate}
\item $cc(f\inv(]a,b[))$ is  a maximal action angle domain.
\item there exists $c\in\, ]a,b[$ such that $f\inv(c)\cap cc(f\inv(]a,b[))$ is  a Klein bottle, and $]a,b[\setm\{c\}\subset \Rr(f)$. 
\item $cc(f\inv(\{a\}))$ is an elliptic orbit.
\item $cc(f\inv(\{a\}))$ is a $\8$-level.
\end{enumerate}



\noindent $\bullet$\, \textbf{Cases (1) and (2):} Using remark \ref{Klein} and property \ref{propriete} (2), we see that  (2) boils down to (1).
Let $\ha{\Aa}\subset M$ be an action angle domain such that  $cc(f\inv([a,b]))\subset\Aa$. 
There exists an open domain $U\subset \R^2$ and a symplectic diffeomorphism $\Psi: \Aa\rit  \T^2\times U, m\ma (\th,r)$ such that $r=R\circ(H,f)$ where $R$ is a diffeomorphism between two open domains of $\R^2$. Moreover $\phi_H$ is conjugate to the Hamiltonian flow $\psi:=(\psi_t)$ on $\T^2\times U$ associated with $H\circ\Psi\inv$.
One has $\Psi(cc(f\inv(]a,b[)))=\T^2\times R(0,]a,b[)$. 
Consider the sequences $(a_n)_{n\in\N^*}$ and $(b_n)_{n\in\N^*}$ defined by 
\[
a_n= a +\frac{1}{n}(b-a),\:\:\:\: b_n= b -\frac{1}{n}(b-a)
\]
For $n\geq 2$, we set  $K_n:=\Psi(cc(f\inv([a_n,b_n])))=\T^2\times R(0,[a_n,b_n])$. 
So
\[
cc(f\inv(]a,b[))=\bigcup_{n\geq 2} \Psi\inv(K_n).
\]
By propositions \ref{sigUC^*}  and \ref{Csubman}, one gets
\[
\hw(\phi_H,cc(f\inv(]a,b[)))=\sup_{n\in\N}\,\hw (\psi,\T^2\times K_n)\in\{0,1\}.
\]
\vspace{0.2cm}

\noindent $\bullet$\,  \textbf{Case (3):} Let $\Cc:=cc(f\inv(\{a\}))$ be the elliptic orbit.  
The time-one map of the flow $\phi_H$ restricted to $\Cc$ is conjugate  to a rotation, so $\hw(\phi_H, \Cc)=\hp(\phi_H,\Cc)=0$.

\vspace{0.2cm}

\noindent $\bullet$\,  \textbf{Case (4):} 
Let $\PP=:=cc(f\inv(\{a\}))$ be the $\8$-level.
Let $\Cc$ be the hyperbolic contained in $\PP$ and denote by $W^s$ its stable manifold.
Then, as before,  $\hp(\phi_H,\Cc)=0$.
Now $\PP\setm\Cc$ has two connected components $W_1$ and $W_2$.
For $i=1,2$, there exists a domain $D_i\subset W_i\cup \Cc$ such that 
\[
\Cc\subset D_i\subset W^s \quad {{\rm and}} \quad W_i=\bigcup_{n\in\N}(\phi^1_H)^{-n}(D_i).
\] 
We can assume that $D_i$ is small enough so that $(D_i, (\phi_H)_{| W_i\cup\Cc})$ admits a contracting fibered structure over $(\Cc,(\phi_H)_{|\Cc})$.
Therefore $\hp(\phi_H, D_i)=\hp(\phi_H,\Cc)=0$, which yields  $\hw(\phi_H,D_i)=0$.
Applying proposition \ref{sigUC^*}, one gets 
\[
\hw(\phi_H,\bigcup_{n\in\N}(\phi^1_H)^{-n}(D_i))=0,
\]
and  $\hw(\phi_H, \PP)=0$.
\end{proof}

\section{The polynomial entropy $\hp$}

This section is devoted to the following main result of this paper. 

\begin{thm}
Let $(\EE,\phi_H,f)$ be a dynamically coherent system. Then 
\[
\hp(\phi_H)\in\{0,1,2\}.
\]
Moreover, $\hp(\phi_H)=2$ if and only if $\phi_H$ possesses a hyperbolic orbit.
\end{thm}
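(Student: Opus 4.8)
The plan is to mirror the strategy of the proof of Theorem \ref{thmhw}: cover $\EE$ by finitely many $\phi_H$-invariant domains $D$ of the same four types (maximal action-angle domain; a domain whose only critical fiber is a Klein bottle; an elliptic orbit; an $\8$-level, and more generally a simple polycycle), compute $\hp(\phi_H,D)$ for each type, and then invoke Property \ref{propriete}(5) to conclude $\hp(\phi_H)=\Max_D \hp(\phi_H,D)$. For the action-angle and Klein bottle cases, Remark \ref{Klein}, Proposition \ref{Csubman} together with Remark \ref{entpolpourhconv} show that $\hp$ on such a domain equals $\rk\om$ on the relevant submanifold of $\R^2$, which here is $0$ or $1$ since the energy level is fixed (one action is frozen) — so these contribute at most $1$. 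However, unlike the weak entropy, $\hp$ is \emph{not} a $\sigma$-union invariant, so the exhaustion $cc(f\inv(]a,b[))=\bigcup_n\Psi\inv(K_n)$ argument must be replaced: one must work on the compact closure directly, applying Proposition \ref{Csubman} to $\T^2\times\Ss$ with $\Ss$ a compact energy level (with boundary) inside the action-angle image, which still gives $\hp\le 1$.

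The heart of the matter is the $\8$-level / simple polycycle case, and showing that it forces $\hp=2$. For the lower bound $\hp(\phi_H,\PP)\ge 2$: near a hyperbolic orbit $\Cc$, use the coordinates $(\vp,I,q,p)$ of Proposition \ref{coorhype} on the fixed energy level (Corollary \ref{coorhype2}), in which the flow restricted to $\UU\ke$ is $\dot\vp=\partial_I H$, $\dot q=-p\,\partial_J H$, $\dot p=q\,\partial_J H$ — i.e. a linear saddle $\times$ rotation. One then estimates $G_t^\phi(\eps)$ from below: the number of $d_t^\phi$-balls needed to cover a fixed small box in these coordinates grows like $t^2$, because there are two independent sources of spreading — the hyperbolic one near the orbit (the stable/unstable splitting forces $\sim e^{ct}$ separation, but the points that remain in the box for all of $[0,t]$ form a neighborhood of the local stable+unstable manifolds whose ``width'' shrinks like $e^{-ct}$, giving a logarithmic, i.e.\ $O(1)$, contribution in the $\Log t/\Log n$ scale — so actually the hyperbolic direction alone gives only $\hp=1$) combined with the \emph{shear}: points with slightly different values of $J=qp$ have slightly different frequencies $\partial_I H(\II(J),J)$, and by the nondegeneracy determinant this frequency genuinely varies with $J$; iterating for time $t$ amplifies a $J$-difference of size $1/t$ into an $O(1)$ angular difference, which is exactly the mechanism of Proposition \ref{Csubman} and gives one more unit of polynomial entropy. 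Combining the shear in $(\vp,J)$ with the transverse hyperbolic direction yields $\hp\ge 2$. For the upper bound $\hp(\phi_H,\PP)\le 2$, one decomposes a neighborhood of $\PP$ into the regular part (a finite union of action-angle pieces, each $\le 1$) and a neighborhood $E$ of $\Cc$ in its stable/unstable manifold; on $E$, a refinement of Proposition \ref{prop:contract} — a ``fibered'' counting where the base $(\Cc,\phi_H)$ contributes $\hp=0$ on the pure circle but the fibers carry the action-angle shear in the $I$-direction — shows $G_t^\phi(\eps)\lesssim t^2$. The main subtlety is that near a polycycle an orbit can travel along several hyperbolic circles in succession, so one must control the number of combinatorial ``itineraries'' of length $O(\log t)$ among the finitely many circles $\Cc_1,\dots,\Cc_q$; but since all these circles have the same period (the Property proved after Corollary \ref{coorhype2}) and the transition maps are $C^1$, the itineraries contribute only a constant factor, not a polynomial one, so the bound $t^2$ survives.

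Finally, assembling the pieces: if $\phi_H$ has no hyperbolic orbit, then by dynamical coherence every critical circle is elliptic and $\EE$ is covered by action-angle domains, Klein-bottle domains and elliptic orbits, each of which has $\hp\in\{0,1\}$, hence $\hp(\phi_H)\in\{0,1\}$. If $\phi_H$ has a hyperbolic orbit, it lies in some polycycle $\PP$, and the lower-bound computation gives $\hp(\phi_H)\ge\hp(\phi_H,\PP)=2$ by Property \ref{propriete}(1), while the case-by-case upper bounds give $\hp(\phi_H)\le 2$; so $\hp(\phi_H)=2$. This also shows $\hp(\phi_H)\in\{0,1,2\}$ in all cases, completing the proof.

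\begin{proof}[Sketch of the intended argument]
We exhaust $\EE$ by finitely many compact invariant domains of the four types above and use Property \ref{propriete}(5), $\hp(\phi_H)=\Max_D\hp(\phi_H,D)$. On an action-angle or Klein-bottle domain, Proposition \ref{Csubman} applied to a compact energy level with boundary in the action-angle chart gives $\hp(\phi_H,D)=\rk\om\le 1$ (one action is constant along $\EE$). On an elliptic orbit, $\hp=0$. On an $\8$-level or simple polycycle $\PP$ containing a hyperbolic orbit $\Cc$: using the coordinates of Proposition \ref{coorhype} and Corollary \ref{coorhype2}, the flow on $\UU\ke$ is a saddle $\times$ rotation whose frequency depends on $J=qp$ via the nondegeneracy determinant; the hyperbolic direction contributes only $\hp=1$ on its invariant manifolds (by Proposition \ref{prop:contract}), but the shear in $(\vp,J)$ adds one more unit exactly as in Proposition \ref{Csubman}, giving $\hp(\phi_H,\PP)\ge 2$; a matching upper bound $\hp(\phi_H,\PP)\le 2$ follows by splitting a neighborhood of $\PP$ into its regular part and a fibered neighborhood of $\Cc$, controlling the $O(\log t)$ itineraries among the finitely many hyperbolic circles (all of the same period) by a constant factor. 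Hence $\hp(\phi_H)=2$ when a hyperbolic orbit exists, and $\hp(\phi_H)\in\{0,1\}$ otherwise; in all cases $\hp(\phi_H)\in\{0,1,2\}$.
\end{proof}
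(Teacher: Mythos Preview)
Your overall architecture (finite cover, Property \ref{propriete}(5), case analysis) matches the paper, and your treatment of the action-angle, Klein bottle and elliptic domains is essentially what the paper does in Section~5.1, Cases~(1)--(3). The genuine gap is in the polycycle case, and it affects both bounds.

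For the lower bound, your mechanism is not the right one. On the polycycle $\PP$ itself, $\hp(\phi_H,\PP)=0$: the invariant manifolds of $\Cc$ have a contracting fibered structure over $\Cc$, so by Proposition~\ref{prop:contract} they contribute $\hp=0$, not $1$ as you claim. The entropy $2$ lives on the \emph{closed partial neighborhood} $\ov{\DD_a}$, not on $\PP$. A purely local ``saddle $\times$ rotation with shear'' estimate in $\UU\ke$ cannot give $\hp\ge 2$ because orbits with $J=\rho>0$ leave the box after time $\sim|\log\rho|$; you need the global return. The paper's argument (Section~5.3, Steps~1--3) is different: near $\PP$ the periodic orbits have periods $T(r)\to\infty$ as $r\to 0$; on an orbit of period $q\ge 3$ one finds an $(N,\eps)$-separated set of $[q/2]$ points for any $N\ge q$, and orbits with periods $q,q'\in[N/3,N/2]$ differing by $\ge 3$ are mutually $(N,\eps)$-separated. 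This yields $\gtrsim N^2$ separated points. The shear you invoke is real, but it is the \emph{period} shear $T'(r)\ne 0$ (torsion condition), exploited globally, not a local $(\vp,J)$ shear inside $\UU\ke$.

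For the upper bound, ``a refinement of Proposition~\ref{prop:contract}'' plus ``controlling the $O(\log t)$ itineraries by a constant factor'' is not a proof---this is precisely the hard part, and the paper devotes most of Section~5 to it. You cannot split $\ov{\DD_a}$ into a compact regular piece and a fibered neighborhood of $\Cc$: the regular part accumulates on $\PP$, and the contracting fibered structure only applies \emph{on} the stable/unstable manifolds, not on nearby Liouville tori. What the paper does is construct a $C^0$ conjugacy (Theorem~3, via Lemmas~I and~II) between $\phi_H$ on $\ov{\DD_a}$ and an explicit \emph{$p$-model} flow $\al\otimes\psi$ on $\T^2\times[0,1]$ (Definitions~\ref{def:modelplan} and~\ref{def:pmodel}), built from a proper section and a carefully chosen planar model satisfying torsion and tameness conditions. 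Only then does it prove $\hp(\al\otimes\psi)\le 2$ by an explicit two-regime covering (Section~5.3): an inner annulus where periods exceed $\ka N$ (covered by $O(N)$ sets using the slow drift near the singular points) and an outer annulus stratified into strips $\ha{{\bf S}}_k$ of period in $[\ka N/(k+1),\ka N/k]$, each covered by $O(N^2/k^2)$ sets via Lemma~\ref{lem:fatten}, summing to $O(N^2)$. The equality of periods of the hyperbolic orbits that you mention is used, but much earlier, in building the proper section (Lemma~I), not in an itinerary count.
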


\subsection{Sketch of proof}
For  two values $a<b$ of $f$, we denote by $cc(f\inv([a,b]))$ an arbitrary connected component of $f\inv([a,b])$. Such a domain is obviously invariant by $\phi_H$. The strategy of the proof consists in choosing a suitable finite covering of $\EE$ by such domains and to compute $\hp(\phi_H,cc(f\inv([a,b])))$ for each of them. 
We choose the domains such that  the following four different cases only occur:
\begin{enumerate}
\item  $cc(f\inv([a,b]))$ is contained in a action-angle domain.
\item there exists $c\in \,]a,b[$ such that $f\inv(c)\cap cc(f\inv(]a,b[))$ is  a Klein bottle, and $[a,b]\setm\{c\}\subset \Rr(f)$
\item $f\inv(a)\cap cc(f\inv([a,b]))$ is an elliptic orbit and  $]a,b]\subset \Rr(f)$. 
\item $f\inv(a)\cap \ov{cc(f\inv(]a,b]))}$ is contained in an $\8$-level  and  $]a,b]\subset \Rr(f)$.
\end{enumerate}
\vspace{0.4cm}

\noindent $\bullet$ \textbf{Case (1) and (2)}. As before, we just have to study the case (1). By proposition \ref{Csubman}, one can immediately conclude that:
\[
\hp(\phi_H,cc(f\inv([a,b]))\in \{0,1\}.
\]
\vspace{0.4cm}

\noindent $\bullet$ \textbf{Case (3)}. We will use corollary \ref{aaelliptic2} and proposition \ref{Csubman}.  We denote by $\Cc$ the elliptic orbit $\Cc=f\inv(\{a\})\cap cc(f\inv([a,b]))$. We  assume that $cc(f\inv([a,b]))$ is contained in an open  neighborhood $U$ of $\Cc$ with coordinates $(\vp,I,q,p)$ as in proposition \ref{aaelliptic}. 

Let $O:=I(U)\times J(U)$ and consider the flow $\phi$ on $\T^2\times O$ associated with the vector field (\ref{aaelliptic3}), defined in corollary \ref{aaelliptic2}. 
Then, by corollary \ref{aaelliptic2}, 
\[
\hp(\phi_H, cc(f\inv([a,b])))\leq \hp(\phi^1,\pi\inv(cc(f\inv([a,b])))).
\] 
Now by proposition \ref{Csubman},
$
\hp(\phi^1,\pi\inv(cc(f\inv([a,b]))))\in \{0,1\}.
$
\vspace{0.15cm}

\noindent 1) If $\hp(\phi^1,\pi\inv(cc(f\inv([a,b]))))=0$, then $\hp(\phi_H, cc(f\inv([a,b])))=0$.
\vspace{0.2cm}

\noindent  2) If $\hp(\phi^1,\pi\inv(cc(f\inv([a,b]))))=1$ then $\hp(\phi_H,f\inv([a,b]))=1$. Indeed, show that $\hp(\phi_H,f\inv([a,b]))\geq1$.
We consider $(I,J)$ as functions on the values $e,\rho$ of $H$ and $F$ and conversely, we consider $H$ as functions on the variables $I,J$. We set 
\[
\Ss:  \{(I(0,\rho), J(0,\rho))\,|\, \rho\in [a,b]\},\quad  {{\rm and}}\quad  \om :\Ss\rit \R^n : (I,J)\mapsto d_{(I,J)}(H_{|_{\Ss}}).
\]
Since $\hp(\phi^1,\pi\inv(cc(f\inv([a,b]))))=1$, by proposition \ref{Csubman}, there exists  a value $c\in [a,b]$ such that $\rk \om(I(0,c),J(0,c))=1$. By lower semi-continuity of  the $\rk$, there exists a neighborhood $V\subset [a,b]$ of $c$ such that for each $c'\in V$, $\rk \om(I(0,c'),J(0,c'))=1$. So we can assume that $c\in \,]a,b]$. Fix $\eps>0$ such that $[c-\eps,c+\eps]\subset [a,b]$.
Then 
\[
\hp(\phi_H,f\inv([a,b]))\geq \hp(\phi_H,f\inv([c'-\eps,c'+\eps]))=1,
\] the last equality coming from proposition \ref{Csubman}.
\vspace{0.4cm}

\noindent $\bullet$ \textbf{Case (4)}.  
We first observe that, as in the case of Klein bottles, by \ref{propriete} (2), it suffices to study the case where $\PP$ is orientable. We will indeed study the more general case where $\PP$ is a simple polycycle. 

Given a simple polycycle $\PP$ contained in $f\inv(0)$, there are regular values $\pm a$ of $f$ and a neighborhood 
$\VV\subset f\inv(]-a,a[)$ of $\PP$ in $\EE$  such that each connected component of  $f\inv(]0,\pm a[)\cap \VV$  is contained 
in a maximal action-angle domain.
Given such a domain, $\DD$, then $\ov\DD\cap \PP$ is a stratified submanifold of $\EE$, 
which is the ``ordered'' union of a finite number of hyperbolic  orbits, $\Cc_1,\ldots,\Cc_n$\index{$\Cc_k$}, and cylinders $W_{k,k+1}$\index{$W_{k,k+1}$} for $1\leq k\leq n$, 
such that $W_{k,k+1}$ is one common connected component of $W^-(\Cc_k)$ and $W^+(\Cc_{k+1})$ (with the convention $n+1=1$, see \cite{Mar-09} for some details in the ordering in the planar case).

\begin{figure}[h]
\begin{center}
\begin{pspicture}(4cm,5.8cm)
\psset{xunit=.8cm,yunit=.8cm}
\rput(-.8,3){
\pscustom[fillstyle=solid,fillcolor=lightgray,linestyle=solid,linewidth=0.7pt]
{\psccurve(0,4)(1.2,3.7)(3,2.2)(4.5,2.8)(6,1.4)(8,1.5)(9.5,0)(8,-1.5)(6,-1)(4.5,-1.8)(3,-.9)(0,-2)(-1.6,-1)(-2.7,-.2)(-2.2,1)(-1.9,2.5)(-1.3,3.5)}
\pscustom[fillstyle=solid,fillcolor=white,linestyle=solid,linewidth=0.7pt]
{\pspolygon(-1.26,0.745)(-1.1,.2)(-.77,-.255)(2,0)(2,2)}
\pscustom[fillstyle=solid,fillcolor=white,linestyle=solid,linewidth=0.7pt]
{\psccurve[linewidth=.06](0,0)(1.5,2)(3,0)(4.5,-1)(6,0)(7.5,1)(9,0)(7.5,-1)(6,0)(4.5,2)(3,0)(0,-1)(-1,3)(2,2)(-2,0)}
\psline[linewidth=.06]{->}(0,4)(-.2,4)
\pscircle[fillstyle=solid,fillcolor=black](1.96,1.8){.1}
\rput(2.9,1.7){$\Cc_1=\Cc_8$}
\pscircle[fillstyle=solid,fillcolor=black](-1.26,0.745){.1}
\rput(-1.6,0.9){$\Cc_2$}
\pscircle[fillstyle=solid,fillcolor=black](-.77,-.255){.1}
\rput(-.8,-.6){$\Cc_3$}
\pscircle[fillstyle=solid,fillcolor=black](3,0){.1}
\rput(3,-.4){$\Cc_4$}
\rput(3,.4){$\Cc_7$}
\pscircle[fillstyle=solid,fillcolor=black](6,0){.1}
\rput(6,-.4){$\Cc_5$}
\rput(6,.4){$\Cc_6$}
\rput(0.5,4.3){$f\inv(\{a\})$}
\rput(6,.4){$\Cc_6$}
\rput(4.5,3.8){$\DD$}
\psline[linewidth=.01](4.5,3.5)(4.5,2.5)
}
\end{pspicture}
\end{center}
\vskip-1cm
\caption{}\label{A transverse section of a simple polycycle}
\end{figure}

Since $\PP$ is simple, by definition, one can choose $a>0$ small  enough so that for each domain $\DD$ as above, setting $\DD_a=\DD\cap f\inv(]0,\pm a])$\index{$\DD_a$} (according to the initial sign), 
there exists a homeomorphism 
\[
\chi:\ov{\DD_a} \to \T^2\times [0,1]
\]
which is smooth outside the union of the hyperbolic orbits contained in $\ov{\DD_a}\cap \PP$ and such that 
$\chi\inv(\T^2\times \{c\})$ is a Liouville torus for each $c\in\,]0,1]$. 
Such a domain $\ov{\DD_a}$ will be called a {\em partial neighborhood} for $\PP$ and such a homeomorphism $\chi$ will be called a {\em compatible} homeomorphism.

We remark that if $a$ is small enough, there exists a finite set of such partial neighborhoods $\ov{\DD_a}$ whose union cover $\PP$.


We will choose a suitable finite covering of a neighborhood of $\PP$ by partial neighborhoods and we will conjugate the flow $\phi_H$ restricted to any  of them of $\PP$ to the flow $\phi$ of a ``model'' system on $\jA:=\T^2\times[0,1]$\index{$\jA$} for which we will be able to estimate the polynomial entropy.

In section 5.2,  we define the model system $\phi$  and construct a conjugacy between $\phi$ and the restriction of $\phi_H$ to a partial neighborhood. 
In section 5.3, we show that $\hp(\phi)=2$.

\subsection{Construction of the conjugacy to a $p$-model system.}

\newcommand{\hjA}{\ha{\jA}}
Consider the compact annulus $\hjA:=\T\times[0,1]$\index{$\hjA$} with coordinates $(\th,r)$. 

If $\psi=(\psi^t)_{t\in\R}$ is a flow on $\hjA$ whose orbits are the circles $\T\times\{r\}$, we define  the {\em separation function} for two points $a=(\th,r)$ and $a'=(\th',r)$ on the same orbits as follows.
Consider a lift $\wti{\psi}:=(\ti\psi^t)$ of $\psi$ to $\R\times[0,1]$ and two lifts $\ti a, \ti a'$ of $a,a'$ located in the same fundamental domain of the covering. We set  $\wti \psi^t(\ti a)=(x(t),r)$ and
$\wti \psi^t(\ti a')=(x'(t),r)$. Then  the separation function of $a$ and $a'$ is the function $E_{a,a'}:\R\to \R$ defined by
\[
E_{a,a'}(t)=\abs{x'(t)-x(t)}.
\]
Obviously,  $E_{a,a'}$ is independent of the lifts. It is a smooth, nonnegative and  periodic function.

\begin{nota}
If $\psi=(\psi^t)_{t\in\R}$ is a flow on a set $X$, we will often write  $\psi(t,x)$ instead of $\psi^t(x)$.
\end{nota}

We define  a {\em fundamental domain} for the flow $\psi$ on $\hjA$ as a subset $\KK$ of $\hjA$ of the form $\psi([0,1],\De)=\cup_{t\in[0,1]}\psi^t(\De)$, where $\De$ is a vertical segment of equation $\th=\th_0$. 
\vspace{0.2cm}

Fix $p\in \N^*$. For $1\leq k\leq p$, we set $z_k:=(\kp,0)$ and $\OO_k:=\{|\kp-\th|<\frac{1}{8p}\}$.

\begin{Def}\label{def:modelplan}
We call {\em planar $p$--model}  on $\hjA$ any continuous flow $\psi:=(\psi_t)_{t\in\R}$ that satisfies the following conditions: 
\begin{itemize}
\item (C1) If $r>0$, the orbit of any point $(\th,r)$ is the circle $\T\times\{r\}$ and there exists $\ell>0$ such that,  for any lift $\wti \psi:=(\wti{\psi}^t)$ of  $\psi$ in $\R\times]0,1]$, and  any $(x,r)\in\,\R\times\,]0,1]$, one has 
\[
0<\frac{x(t)-x(t')}{(t-t')}\leq \ell,\quad t'\neq t \in \R,
\]
where $(x(t),r)=\wti{\psi}^t(x,r)$.

\item (C2) There exists a neighborhood $\OO_k$\index{$\OO_k$} of $z_k$ such that the restriction of $\psi$ to $\OO_k$ is a flow associated with a vector field of the form
\begin{equation}\label{eq:genform}
V_k(\th,r)=\lam_k(r) \sqrt{\left(\th-\kp\right)^2+\mu_k(r)}\Dp{}{\th}
\end{equation}
where $\lam_k$ and $\mu_k$ are positive $C^1$ functions on $[0,1]$ with $\lam_k(0)>0$, $\mu_k(0)=0$ and $\mu'_k(0)>0$.
Moreover if we set $\psi^t(\th,0)=(\th(t),0)$, then $(\th(t)-\th(t'))(t-t')>0$, for any $\th\in \T\setm\{\kp, 1\leq k\leq p\}$.
\vspace{0.1cm}

\item (C3) \textbf{Torsion condition}:  If $\wti \Psi:=(\wti{\psi}^t)_{t\in\R}$  is any lift of  $\Psi$ in $\R\times[0,1]$,  one has, for any $x\in\,\R$ and $0\leq r_1<r_2\leq 1$:
\[
x_1(t) < x_2(t)
\]
where $(x_i(t),r_i)=\wti{\psi}^t(x,r_i)$.
\vspace{0.1cm}

\item (C4) \textbf{Tameness condition}: There exists a fundamental domain $\KK$ for $\psi$  such that, given two points $a$ and $a'$ in $\KK$ on the
same orbit, there exists $t_0$ such that $E_{a,a'}(t_0)$ is maximum and 
the points $\psi^{t_0}(a)$ and $\psi^{t_0}(a')$ are located inside the domain $\KK$.
\end{itemize}
\end{Def}

Let us comment this definition. The conditions {\rm (C1)} and (C2)  say that each orbit with positive $r$ is periodic and that the points $z_k$ are the only fixed points of the flow.
The torsion condition says that  the vertical is twisted to the right by the maps $\wti \psi^t$ for $t>0$. The tameness condition is essentially a technical condition that facilitate the computation of $\hp$. Due to the torsion condition, the period $T(r)$ of the periodic orbit $\T\times\{r\}$, $r>0$, is a decreasing function of $r$, so that the minimal period $T^*$ is achieved when $r=1$.
\vspace{0.2cm}

\begin{Def}\label{def:pmodel} Let $\al$ be a $C^1$ positive function and let $\psi$ be a planar $p$-model flow on $\T\times [0,1]$.
The \emph{$p$-model system on $\jA:=\T^2\times[0,1]$  associated with $\al$ and $\psi$} is the continuous flow $\al\otimes\psi: ((\al\otimes\psi)^t)_{t\in\R}$\index{$\al\otimes\psi$} defined by
\[
(\al\otimes\psi)^t(\vp,\th,r)=(\vp+t\al(r)\, [\Z], \psi^t(\th,r)).
\]
We call \emph{minimal period} of the $p$-model system the minimal period $T^*$ of the associated planar $p$-model flow.
\end{Def}

The following proposition, stated here a little bit improperly, will be made precise and  proved in section 3.3.2.

\begin{prop}\label{hpolpmodel} With the previous assumptions, given a $p$-model system on $\jA$ with large enough minimal period, then
$\hp(\al\otimes\psi)=2$.
\end{prop}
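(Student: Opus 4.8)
The plan is to establish the two inequalities $\hp(\al\otimes\psi)\leq 2$ and $\hp(\al\otimes\psi)\geq 2$ separately, the first being a soft counting argument and the second being where the torsion and tameness conditions do the real work.

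\medskip

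\noindent\textbf{Upper bound.} To show $\hp(\al\otimes\psi)\leq 2$, I would estimate the number $G_t^{\al\otimes\psi}(\eps)$ of dynamical balls needed to cover $\jA=\T^2\times[0,1]$. The product structure $(\al\otimes\psi)^t(\vp,\th,r)=(\vp+t\al(r),\psi^t(\th,r))$ reduces this to covering $\T\times[0,1]$ in the $\vp$-direction (the linear flow with frequency $\al(r)$) and covering $\hjA=\T\times[0,1]$ for the planar model flow $\psi$. For the $\vp$-factor, the standard estimate for a shear flow with nonconstant frequency gives $O(t)$ balls (one extra power of $t$ for the spreading of frequencies). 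For the $\psi$-factor, condition (C1) says the orbit through $(\th,r)$ with $r>0$ is the circle $\T\times\{r\}$ traversed with speed bounded above by $\ell$ and below by a positive constant, so again the $d_t^\psi$-balls refine the static balls by a factor $O(t)$; near the fixed points $z_k$ the vector field $V_k$ in \eqref{eq:genform} has at worst hyperbolic-type behaviour, for which the separation of nearby orbits is at most exponential in $t$, hence still polynomially controlled after the logarithm is taken. Multiplying the contributions gives $G_t^{\al\otimes\psi}(\eps)\leq C(\eps)\,t^2$, hence $\hp(\al\otimes\psi)\leq 2$.

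\medskip

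\noindent\textbf{Lower bound.} For $\hp(\al\otimes\psi)\geq 2$ I would argue that already the $\psi$-factor on $\hjA$ contributes $1$ from the fixed points and that the combination of $\psi$ with the shear in $\vp$ contributes a second unit. Concretely, fix $r=1$, where the minimal period $T^*$ is achieved. On the invariant torus $\T^2\times\{1\}$ the flow is $(\vp,\th)\mapsto(\vp+t\al(1),\theta+t/T^*)$ up to reparametrization; but the point is to use nearby values $r<1$ simultaneously. By the torsion condition (C3), the period $T(r)$ is strictly decreasing, so as $r$ ranges over a small interval the pair of "frequencies" $(\al(r),1/T(r))$ traces a curve of nonzero speed in $\R^2$; this is exactly the rank-$1$ situation that, combined with one genuinely periodic angle, forces $\hp\geq 2$ by the same fibered-volume argument used in the proof of Proposition \ref{Csubman}. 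More precisely I would: (a) pick a fundamental domain $\KK$ as in the tameness condition (C4); (b) for a covering of $\jA$ by dynamical balls $B_{n_i}((\vp_i,\th_i,r_i),\eps)$, estimate the Lebesgue volume of the fiber of such a ball over a fixed $\vp$-slice and over a fixed $\th$-slice, using (C4) to ensure the separation function $E_{a,a'}$ attains its maximum inside $\KK$ so that the ball's shape is controlled by a single linearization; (c) conclude that $\mathrm{Vol}(\text{fiber})\leq c\,(\eps/n_i)^2$, whence $\sum_i n_i^{-s}$ diverges for $s<2$ exactly as in Proposition \ref{Csubman}, giving $s_c\geq 2$ and $\hp(\al\otimes\psi)\geq 2$.

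\medskip

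\noindent\textbf{Main obstacle.} The delicate point is the behaviour near the fixed points $z_k$, where the orbit is no longer the whole circle $\T\times\{r\}$ but degenerates: on the separatrices through $z_k$ the return time $T(r)$ blows up like $|\log r|$ (since $\mu_k(0)=0$, $\mu_k'(0)>0$ makes $V_k$ resemble $\lam_k(r)\sqrt{(\th-k/p)^2+\mu_k(r)}\,\partial_\theta$, a saddle-node-type passage whose transit time is logarithmic). This is precisely why the hypothesis "large enough minimal period" is needed: it guarantees that the logarithmic blow-up of the period near the $z_k$ does not create an extra polynomial order, i.e. that the $|\log r|$ growth is dominated by the linear-in-$t$ growth on the bulk orbits, and conversely that the torsion-induced spreading of periods over $r\in[0,1]$ genuinely produces the full second unit of entropy rather than being swamped. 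I expect the careful bookkeeping of dynamical balls straddling the neighborhoods $\OO_k$ — matching the logarithmic time-reparametrization there to the uniform estimates (C1) outside — to be the technical heart of the argument, and this is what the promised precise version in section 5.3.2 will have to handle.
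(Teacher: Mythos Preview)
Your decomposition of the entropy is backwards, and this leads to genuine gaps in both directions.

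\textbf{Lower bound.} You attempt to extract one unit of entropy from the planar factor $\psi$ and a second unit from the shear in $\vp$ via the varying frequency $\al(r)$. But the definition of a $p$-model only requires $\al$ to be $C^1$ and positive; nothing prevents $\al$ from being \emph{constant}, in which case the $\vp$-factor is a rigid rotation contributing zero entropy. Your Proposition~\ref{Csubman}--style volume argument, which relies on the pair $(\al(r),1/T(r))$ tracing a rank-one curve, collapses in that case. The paper instead obtains both units of entropy from the planar flow $\psi$ alone: one fixes $\vp$ and exhibits, for each $N$, an $(N,\eps)$-separated set of cardinality $\sim N^2$ inside $\hjA$. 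The construction places $\sim N$ points on each periodic orbit $C_q$ of period $q\in[N/3,N/2]$ (this uses tameness), and shows that points on orbits with periods differing by at least $3$ are mutually separated because after $q$ iterations they desynchronize (this uses torsion). Selecting $\sim N$ such periods yields $\sim N^2$ separated points.

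\textbf{Upper bound and the role of the minimal period.} Your factorization ``$O(t)$ from $\vp$ times $O(t)$ from $\psi$'' is not how the paper proceeds, and your interpretation of the large-minimal-period hypothesis is inverted. The paper first covers $\hjA$ by $O(N^2)$ sets of $\hd_N^\psi$-diameter $\leq\eps$, splitting into a thin annulus $\hjA_N$ near $r=0$ (where orbits remain trapped near the blocks $B_k(\eps)$ for $N$ iterations, costing only $O(N)$ sets) and the complement $\hjA_N^*$ (covered by thin strips $\ha S_{q,q'}$, costing $O(N^2)$). The crucial point is that the elements of this covering have width $\Delta r$ in the $r$-direction that is \emph{exponentially small} in their period $q$ (since $T(r)\sim c\abs{\log r}$ forces $r'(q)$ to decay like $e^{-\bar c q}$). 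When the minimal period $q^*$ is large enough, $N\max\abs{\al'}\,\Delta r\leq\eps/2$, so the shear in $\vp$ over $N$ iterations never exceeds $\eps/2$ across any covering element. Hence the $\vp$-direction is covered by $O(1/\eps)$ intervals of fixed length, contributing a \emph{constant} factor, not $O(N)$. The large minimal period is there precisely to kill any extra contribution from the $\vp$-factor, not to tame the logarithmic transit times in $\psi$.
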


We say that a partial neighborhood $\ov{\DD_a}$ of a simple polycycle is a {\em desingularization domain} if there exists a compatible homeomorphism $\chi :\ov{\DD_a} \to \T^2\times [0,1]$ which conjugates $\phi_H$ to the flow $\al\otimes\psi$ of a $p$-model on $\jA$, that is,
\[
\forall (t,z)\in \R\times \ov{\DD_a},\quad \chi\circ\phi_H(t,z)=\al\otimes\psi\big(t,\chi(z)\big).
\]
 We can now state our main result.
\vspace{0.4cm}

\begin{thm}
Given a simple polycycle $\PP$ of the dynamically coherent system $(\EE,\phi_H,f)$, there exists  $a>0$ small enough so that any partial neighborhood $\ov{\DD_a}$ is a  desingularization domain.
\end{thm}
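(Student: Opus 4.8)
The plan is to produce the desingularizing homeomorphism $\chi$ by assembling explicit local conjugacies near the hyperbolic orbits $\Cc_1,\dots,\Cc_p$ of $\PP$ with the Arnol'd--Liouville coordinates on the adjacent maximal action-angle domains, and then to \emph{define} the planar model flow $\psi$ as the second component of the conjugated flow. One works on the energy level $\EE$, assuming $H(\EE)=0$. First I would fix the transverse data: restricting the diffeomorphism $\BB$ of the definition of simple polycycle to $e=e_0$ yields a global circle coordinate $\vp\in\T$ on a neighborhood of $\PP$ in $\EE$ for which $f$ is $\vp$-independent, so that each Liouville torus is $\T\times\gamma_c$ with $\gamma_c\subset O$ a level curve of $f$ (viewed as a function of $x\in O$ alone) and $\gamma_0$ the projection of $\PP$. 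One reparametrizes the $f$-values met by $\ov{\DD_a}$ by a coordinate $r\in[0,1]$, with $r=0$ on $\PP$ (for instance $r$ a fixed smooth increasing function of $|\JJ|$, where we orient so that $\JJ\ge0$ on $\ov{\DD_a}$). I claim the flow then reads $\dot\vp=\al(r)$: near each $\Cc_k$, Proposition~\ref{coorhype} and Corollary~\ref{coorhype2} give $\dot\vp_k=\pa H/\pa I_k$, a constant on each torus, and since $\II,\JJ$ are global and agree with $I_k,J_k$ near $\Cc_k$, these local frequencies glue to one positive function $\al(r)$. The Property of simple polycycles --- that all hyperbolic orbits lying in $\PP_e$ have the same period --- is exactly what makes $\al$ extend to a $C^1$ function on $[0,1]$, with $\al(0)$ the common frequency of the $\Cc_k$.

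Next I would build the local model near a fixed $\Cc_k$. In the coordinates $(\vp_k,I_k,q_k,p_k)$ of Proposition~\ref{coorhype} on $\EE$, with $J_k=q_kp_k=\JJ$ and $I_k=\II$, the Hamiltonian flow on the branch $\{q_kp_k=j,\ q_k,p_k>0\}$ followed by the orbits passing near $\Cc_k$ (where $j=j(r)>0$ is the value of $\JJ$ on the torus) is the hyperbolic saddle $q_k(t)=e^{\om_kt}q_k(0)$, $p_k(t)=e^{-\om_kt}p_k(0)$ with $\om_k=\pa H/\pa J_k\neq0$ (positive after an obvious relabelling); writing $q_k=\sqrt j\,e^{s}$, $p_k=\sqrt j\,e^{-s}$ turns it into the uniform flow $\dot s=\om_k(r)$, with $s$ ranging over an interval of half-width $S_k(j)=\tfrac12\log(1/j)+O(1)$. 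On the model side, the change of variable $\th-\kp=\sqrt{\mu_k(r)}\,\sh u$ turns the vector field $V_k$ of~(\ref{eq:genform}) into the uniform flow $\dot u=\lam_k(r)$, with $u$ of half-width $\Argsh\!\big(1/(8p\sqrt{\mu_k(r)})\big)$. Matching the speeds forces $\lam_k:=\om_k$ (so $\lam_k(0)>0$), and matching the ranges determines $\mu_k$ as a positive $C^1$ function of $r$ with $\mu_k(0)=0$ and $\mu_k'(0)>0$ --- precisely the data allowed in Definition~\ref{def:modelplan}. Composing $\vp\mapsto\vp$, $r\mapsto r$, $s\mapsto u\mapsto\th$ then defines $\chi$ on a neighborhood of $\Cc_k$, explicitly, as a homeomorphism onto $\OO_k\times[0,1]$ that is smooth off $\Cc_k$ and sends Liouville tori to slices $\T^2\times\{c\}$.

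It then remains to glue. On the regular cylinders $W_{k,k+1}$ and their thickenings the flow has no slowdown, and the traversal time of a fixed $\th$-interval is a continuous positive function of position, bounded above; I would extend $\chi$ over these pieces by interpolating between the two adjacent local models along the uniformized coordinate ($s_k$ near one end, $s_{k+1}$ near the other), using the cyclic order $\PP=\bigcup_k W_{k,k+1}$ to close $\th$ into a circle of length $1$ with the intervals $\OO_k$ lying over the points $\kp$. This produces a homeomorphism $\chi:\ov{\DD_a}\to\T^2\times[0,1]$, smooth off $\bigcup_k\Cc_k$, carrying Liouville tori to slices; \emph{defining} $\psi$ on $\T\times[0,1]$ as the second component of $\chi_*\phi_H$ makes $\chi$ conjugate $\phi_H$ to $\al\otimes\psi$ by construction. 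One then checks (C1)--(C4): (C2) is the content of the previous step; (C1) holds because on each torus $\psi$ moves $\th$ at a uniformly bounded positive rate; (C3) (torsion) holds because a saddle passage lasts longer the closer the torus is to the separatrix, so that the period $T(r)=\sum_k 2S_k(j(r))/\om_k(r)+O(1)$ strictly decreases in $r$; (C4) (tameness) is arranged by placing the fundamental domain inside one regular piece, away from all the $\OO_k$. Finally, the minimal period $T^*$ is attained at $r=1$, i.e. on the torus $f=\pm a$; since $S_k(j)\to\infty$ as $j\to0$ and $j\to0$ as $a\to0$, one has $T^*\to\infty$ as $a\to0$, so for $a$ small enough $T^*$ is as large as required (e.g. for Proposition~\ref{hpolpmodel}), and $\chi$ exhibits $\ov{\DD_a}$ as a desingularization domain.

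The hard part will be the gluing step: patching the explicit local conjugacies across the regular pieces into a single homeomorphism that stays continuous up to the singular level $r=0$, where $\ov{\DD_a}$ collapses onto $\PP$ and the time an orbit spends near each $\Cc_k$ diverges. The shape $\sqrt{(\th-\kp)^2+\mu_k(r)}$ of $V_k$ with $\mu_k(0)=0$ is engineered precisely to absorb this blow-up, but verifying the global matching together with the torsion and tameness conditions, and the $C^1$ dependence of $\al,\lam_k,\mu_k$ on $r$ down to $r=0$, is where the genuine work lies; the remainder is bookkeeping with the Arnol'd--Liouville coordinates and the normal form of Proposition~\ref{coorhype}.
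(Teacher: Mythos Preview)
Your plan conflates two different circle coordinates. The $\vp$ you extract from the diffeomorphism $\BB$ in the definition of simple polycycle is global, but nothing in that definition ties $\BB$ to the Hamiltonian dynamics, so there is no reason for this $\vp$ to evolve at a rate constant on each torus (the flow is linear on each Liouville torus only in \emph{some} angle coordinates, and $\BB$ need not furnish those). The normal-form angles $\vp_k$ of Proposition~\ref{coorhype} do satisfy $\dot\vp_k=\partial H/\partial I_k$ constant on tori, and you are right that these local frequencies agree (since $I_k=\II$ globally); but gluing the \emph{frequencies} is not the same as gluing the \emph{angles}. The $\vp_k$ live on disjoint neighborhoods of the $\Cc_k$, and producing a single $\vp$ on all of $\ov{\DD_a}$, continuous up to the singular level $\rho=0$, with $\dot\vp=\al(r)$ everywhere is exactly the hard continuation problem you defer to your last paragraph. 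Until that is done you cannot define $\psi$ as ``the second component of $\chi_*\phi_H$'', because the first component is not yet of the form $\vp+t\al(r)$.

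The paper sidesteps the global angle by passing through a Poincar\'e section. Lemma~I constructs a two-dimensional proper section $S\subset\ov{\DD_a}$ with $C^1$ return time $\tau(\rho)\to T$; its five-step continuation to $\rho=0$ (via an auxiliary action $A$ whose $\phi_A$-transitions across each $\Cc_k$ have angular drift $\vth_k(\rho)\to0$) is the analogue of your gluing and is where most of the work sits. Lemma~II then conjugates the return map $\wp$ on $S$ to the time-$\al$ map $\psi^\al$ of a planar $p$-model; crucially, on the regular blocks $\RR_k$ the speed of the model flow is not inherited from the geometry but \emph{chosen}: $\xi_1$ is set equal to a large constant $M$ on a sub-block so that the separation function is maximized there (this is how (C4) is arranged), and the remaining $\xi_k=\be/\sig_{k,k+1}(ar)$ are increasing in $r$, which together with the monotonicity of $\mu_k$ yields the pointwise torsion (C3). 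Your deduction of (C3) from period monotonicity alone is insufficient: (C3) demands $x_1(t)<x_2(t)$ for all $t>0$ whenever $r_1<r_2$, which does not follow from $T(r_1)>T(r_2)$. The final conjugacy is then the suspension $\wti\chi(z)=(\al\otimes\psi)\big(-t_z^-,(0,\chi(\phi_H(t_z^-,z)))\big)$.
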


\begin{Def} Given a partial neighborhood $\ov{\DD_a}$ for $\PP$ and a positive function $\tau:[0,a]\to\R$,  we call \emph {proper section associated with} $\tau$ a $2$--dimensional $C^1$ submanifold $S$ of $\ov{\DD_a}$ such that for each $z\in \ov{\DD_a}\setm S$, there exists 
a unique pair $(t_z^-,t_z^+)\in\R^{*-}\times\R^{*+}$ with $t_z^+-t_z^-=\tau(f(z))$ and $\Phi_H(]t_z^-,t_z^+[,z)\cap\Sig=\emptyset$,
and such that the Poincar\'e map defined for each $z\in S$ by
\[
\wp(z)=\Phi_H\big(\tau(f(z)),z\big)
\]
is a  homeomorphism of $S$. By $C^1$ submanifold we mean here a $C^0$ submanifold whose intersection with  $\DD_a$ is $C^1$.  
\end{Def}

Given a planar $p$--model $\big(\T\times[0,1],\psi\big)$ and a continuous positive function $\al:[0,1]\to\R$,  the associated
time-$\al$ map is the map $\psi^\al: \T\times[0,1]\to \T\times[0,1]$\index{$\psi^\al$} such that
\[
\psi^\al(\th,r)=\psi\big(\al(r),(\th,r)\big).
\]
The proof of Theorem 3 will rely on the following two lemmas.
\vspace{0.3cm}

\noindent\textbf{Lemma I.} \textit{ Let $T$ be the common period of the hyperbolic orbits contained in $\PP$.
Let $\ov{\DD_a}$ be a partial neighborhood of $\PP$. Then, there exists a proper section in $\DD_a$ associated with a $C^1$ function $\tau :[0,a]\rit \R_+^*$   such that $\lim_{\rho\rit 0}\tau(\rho)=T$.}
\vspace{0.3cm}

\noindent\textbf{Lemma II.}
\textit{ Let $\ov{\DD_a}$ be a partial neighborhood of $\PP$ and let $S$ be a proper section associated with $\tau$. 
Set $\al:[0,1]\to\R: r\ma \tau(ar)$.
Then the return map $\wp$\index{$\wp$} of $S$ is $C^0$-conjugated to the time-$\al$ map of a planar $p$-model 
$(\T\times[0,1],\psi)$}.
\vspace{0.2cm}


\subsubsection{Normal coordinates in the neighborhood of hyperbolic orbits}

Fix a hyperbolic orbit $\Cc_k$ in $\ov{\DD_a}$. Set $\UU_k: \UU_{k,e_0}=U_k\cap\EE$\index{$\UU_k$} as defined in  corollary \ref{coorhype2} with coordinates  $(\vp_k, q_k,p_k)$. The vector field $X^H$ reads
\begin{equation*}
\begin{aligned}
\dot{\vp_k} & = \Dp{H}{I_k}(I_k,J_k)  &&= \Dp{H}{I_k}(\II(q_kp_k),q_kp_k)\\
\dot{q_k} & =-\Dp{H}{J_k}(I_k,J_k)\Dp{J_k}{q_k}(J_k) &&= -p_k\Dp{H}{J_k}(\II(q_kp_k),q_kp_k)\\
\dot{q_k} & =\Dp{H}{J}(I_k,J_k)\Dp{J_k}{p_k}(J_k) &&= q_k\Dp{H}{J_k}(\II(q_kp_k),q_kp_k)
\end{aligned} 
\end{equation*}
Let $\bar{f}: J_k\ma F(\II_k(J_k,e_0), J_k)$%
\[
\partial^2f=
\begin{vmatrix}
q^2\bar{f}'(q_kp_k) & q_kp_k\bar{f}''(q_kp_k)\\[8pt] 
p^2\bar{f}'(q_kp_k)  & p_kq_k\bar{f}''(q_kp_k)
\end{vmatrix}.
\]
Since the surface $\UU_k\cap\{(0,q_k,p_k)\,|\, (q_k,p_k)\in \DD\}$ is transverse to $\Cc_k$ in $\EE$, the determinant above does not vanish and $\bar{f}'(0)\neq 0$.

Denoting by $\bar{f}\inv$ the inverse of $\bar{f} :J_k\ma f(I_k(J_k,e_0), J_k)$, we set 
\[
\om_k: J_k\ma \Dp{H}{I}(\II(\bar{f}\inv(J_k)),\bar{f}\inv(J_k)),\quad \lam_k: J_k \ma \Dp{H}{J}(\II( \bar{f}\inv(J_k)),\bar{f}\inv(J_k)).
\]
One easily checks that these functions are $C^1$. 

Permutating $p_k$ and $-p_k$  and $q_k$ and $-q_k$ if necessary, we can assume that $\ov{\DD_a}\cap \UU_k$ is defined by $p_k\geq 0, q_k\geq 0$, for any $1\leq k \le p$ and that $\lam_k>0$.
Therefore, $\ov{\DD_a}\cap \UU_k$ can be parametrized by $(\vp_k, u=q_k-p_k, \rho=J_k(p_kq_k))$. 

Since $q_k+p_k=\sqrt{(q_k-p_k)^2+4q_kp_k}=\sqrt{u^2+4J_k\inv(\rho)}$ and setting $\mu_k(\rho)=4J_k\inv(\rho)$, the vector field reads
\begin{equation}\tag{$\star$}
\dot{\vp}_k=\om_k(\rho), \quad \dot{u}:=\lam_k(\rho)\sqrt{u^2+\mu_k(\rho)}, \quad \dot{\rho}=0. 
\end{equation}
In the following, we denote by $\UU_k$ the domain contained in $\UU_k\cap \ov{\DD_a}$ defined by $|u|\leq \ov{u}$, for $\ov{u}>0$ small enough independent of $k$.

\subsubsection{Construction of the proper section: proof of Lemma I}

This section is devoted to the proof of lemma I.
Since  a partial neighborhood is the closure of an action-angle domain, we will have to study the proper sections for action-angle systems. Since action-angle systems admit a foliation by invariant Kronecker tori, we begin
by studying the proper sections (suitably defined) for Kronecker flows on $\T^2$.

\paraga {\it Proper sections for minimal Kronecker flows.}
Consider a constant vector field $X=(x_1,x_2)$ with $x_2\neq 0$ on $\T^2$.
We set $\Phi: \R\times\T^2\rit \T^2: (t,\th)\ma \phi^t(\th)$. We denote by $\pi$ the canonical projection $\R^2\rit \T^2$.

\begin{Def}
A closed curve $S\subset \T^2$ transverse to $X$ is a \emph{proper section} if 
\begin{itemize}
\item for all $\th\in \T^2\setm S$, there exists $(t_\th^-,t_\th^+)\in \R_-^*\times \R_+^*$ such that
$\phi^{t_\th^+}(\th)\in S$,  $\phi^{t_\th^-}(\th)\in S$ and $\Phi(]t_\th^-,t_\th^+[,\th)\cap S=\emptyset$,
\item the number $\tau:=t_\th^+-t_\th^-$ is independent of $\th$,
\end{itemize}
We say that $\tau$ is the \emph{transition  time} associated with $S$.
\end{Def}

We denote by $\Dd_r$ the set of vector lines in $\R^2$ with rational slope. For $(q,p)\in \Z\times \N$, we denote by $D_{q,p}$ the vector line with direction vector $(q,p)$. Let $\jS$ be the subset of $\Z\times \N$ defined by $(q,p)\in \jS$ if $|q|\wedge p=1$. Obviously, the map $\Dd_r \rit \jS: D_{q,p}\ma (q,p)$ is bijective.

\begin{prop}\label{SP_Minimal}
For any $(q,p)\in \jS$ such that $X\notin D_{q,p}$, the  projection $\pi(D_{q,p})$ is a proper section.
\end{prop}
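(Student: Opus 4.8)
The strategy is to realise $S:=\pi(D_{q,p})$ as a level set of a circle-valued linear function and to compute the crossing times of the flow by hand. Introduce the linear form $\ell(x_1,x_2)=p\,x_1-q\,x_2$ on $\R^2$, so that $\ker\ell=D_{q,p}$. Since $(q,p)\in\jS$ we have $\gcd(|q|,p)=1$, hence $\ell(\Z^2)=\Z$, so $\ell$ induces a submersion $\bar\ell:\T^2\to\T$ with $\bar\ell\circ\pi$ equal to $\ell$ reduced mod $\Z$; picking (by B\'ezout) a vector $w\in\Z^2$ with $\ell(w)=1$ one checks $\ell\inv(\Z)=D_{q,p}+\Z w$, whence $S=\pi(D_{q,p})=\pi(\ell\inv(\Z))=\bar\ell\inv(0)$. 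The same coprimality shows that $s\ma\pi(s(q,p))$ has minimal period $1$, so $S$ is an embedded circle.

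Because $d\bar\ell$ is the (nonzero, constant) form $\ell$, the value $0$ is regular and $T_xS=\ker\ell$ for every $x\in S$; hence $X$ is transverse to $S$ precisely when $\ell(X)=p\,x_1-q\,x_2\neq0$, that is, when $X\notin D_{q,p}$ --- which is our hypothesis. Set $c:=\ell(X)\neq0$. For $\th\in\T^2$ with a lift $\ti\th\in\R^2$ one has $\bar\ell\big(\phi^t(\th)\big)=\ell(\ti\th)+tc$ mod $\Z$, so $\phi^t(\th)\in S$ if and only if $t$ lies in the arithmetic progression $\frac1c\big(\Z-\ell(\ti\th)\big)$, whose common difference is $1/|c|$.

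If $\th\notin S$, then $0$ does not belong to this progression; letting $t_\th^-<0$ and $t_\th^+>0$ be its largest negative and smallest positive elements, these are consecutive, so $\phi^{t_\th^\pm}(\th)\in S$, $\Phi(]t_\th^-,t_\th^+[,\th)\cap S=\emptyset$, and the transition time $\tau=t_\th^+-t_\th^-=1/|c|=1/|p\,x_1-q\,x_2|$ is independent of $\th$. This is exactly the definition of a proper section. The only points requiring a little care are the elementary bookkeeping identifying $S$ with $\bar\ell\inv(0)$ and the embeddedness of $S$; both follow from $\gcd(|q|,p)=1$, and everything else is a direct computation. (Alternatively one could apply the torus automorphism induced by a matrix of $SL_2(\Z)$ carrying $(q,p)$ to $(1,0)$, reducing to the transparent case $S=\T\times\{0\}$; the functional $\ell$ simply packages that change of variables.)
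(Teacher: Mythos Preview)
Your proof is correct and follows essentially the same approach as the paper: both lift to $\R^2$, observe that $\pi^{-1}(S)$ is a family of equally spaced parallel affine lines, and compute the constant time for the linear flow to pass from one line to the next, obtaining $\tau=|px_1-qx_2|^{-1}$. Your packaging via the linear functional $\ell$ (equivalently, the $SL_2(\Z)$ change of variables you mention) is a slightly cleaner way of organizing the same computation and has the minor advantage of not requiring the case $q=0$ to be treated separately.
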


\begin{proof}
Fix $(q,p)\in \jS$ and let us study the dynamics in the lift $\R^2$ of $\T^2$. 
One has:
\begin{equation*}
\pi\inv(\pi(D_{q,p})) =\bigcup_{(m,n)\in \Z^2}\left((m,n)+D_{q,p}\right) =\bigcup_{n\in \Z}\left((0, \frac{n}{q})+ D_{q,p}\right)
\end{equation*}
the last equality coming from $\Z+\frac{p}{q}\Z=\frac{1}{q}\Z$ since $(q,p)\in \jS$.
For $n\in \Z$, we denote by $D_n$ the affine line $(0, \frac{n}{q})+ D_{q,p}$. 
Let $z\in \pi\inv(\pi(D_{q,p}))$. Let $m\in \Z$ such that $z\in D_m$. 
The time $\tau$ needed to come back to $\pi\inv(\pi(D_{q,p}))$ following the orbit $z+\R X$ is the time needed to cut the line $D_{m+1}$ or the line $D_{m-1}$. This time is independent of the choice of $z$ on $D_m$ and of the choice of $m\in \Z$.
A simple computation yields $\tau=|x_2q-px_1|\inv$.
From this, one immediately deduces that for any $z\in \R^2$, $t_z^+-t_z^-=\tau$.
\end{proof}

\paraga{\it Proper sections for action-angle systems on $\T^2\times\R^2$.}
Let $O$ be an open domain in $\R^2$ and let $h:O\rit \R$ be a $C^2$ function. 
Consider the Hamiltonian system $X^H$ on $\T^2\times O$ defined by $H(\th,r)=h(r)$. We denote by $\phi_H$ its associated flow.
Fix a regular value $e$ of $h$ and set $\Hh_e:=h\inv(\{e\})$. For $r\in O$, we set $\Tt_r:=\T^2\times\{r\}$. The torus $\Tt_r$ is   $\phi_H$-invariant
and $X^H$ is constant on $\Tt_r$, so it can be canonically identified with an element of $\R^2$.  We denote by $\pi_r:\R^2\rit \Tt_r$  the canonical projection.

\begin{Def} Let $D\in \Dd_r$ and assume that $X^H(r)\notin D$, $\forall r\in O$.

1) Let $\Th:O\rit \T^2$ be a smooth map . Set $\LL:=\{(\Th(r),r)\,|\,r\in O\}$. The \emph{proper section associated with $\LL$ and $D$} is the submanifold: 
\[
\ha{S}:=\bigcup_{r\in O}\left( \Th(r) +\pi_r(D)\right).
\]

2) Let $\Th_e:\Hh_e\rit \T^2$ be a smooth map and set  $\LL_e:=\{(\Th_e(r),r)\,|\,r\in \Hh_e\}$. The \emph{proper section associated with $\LL_e$ and $D$} is the submanifold:
\[
S:=\bigcup_{r\in \Hh_e}\left( \Th_e(r) +\pi_r(D)\right).
\]
%
\end{Def}

\begin{rem}
Fix $r$ in $\Hh_e$. For any $\th\in \Tt_r$, the circle $\Th_e(r) +\pi_r(D)$ is a proper section for the Kronecker flow induced by $\phi_H$ on $\Tt_r$. We denote by  $\tau(r)$ its associated transition time. Obviously, the function $\tau :r\ma \tau(r)$ is smooth. We say that $\tau$ is the \emph{transition function} associated with $S$.
\end{rem}

\paraga{\it Proper sections in a partial neighborhood of a simple polycycle.} Now we go back to our Bott system and our simple polycyle $\PP$,
with its neighborhood $U$ endowed with globally defined functions $\II$ and $\JJ$.
Observe that
\begin{equation}\label{actionI}
\II(z)=\int_{C(z)}\lam,
\end{equation}
where $C(z)$ is any circle $\Cc\ke(\rho)$ such that $z\in \Tt\lio$. This function is well defined. Obviously, $\II$ only depends on the values $e$ and 
$\rho$ of $H$ and $F$.  By construction its vector field $X^{\II}$ is $1$-periodic and the critical circles $\CC\ke$ are orbits of its flow $(\phi_{\II}^t)$.

Consider the partial neighborhood  $\VV\subset\wti \UU$ of $\PP$ in $M$ that contains $\ov{\DD}_a$, that is, $\VV\cap U_k=\wti\UU\cap\{p_k q_k\geq 0\}$ (for a suitable compatibe choice of the variables $p_k,q_k$).
Assume that we got another function $A$ defined on $\VV$ such that:
\begin{itemize}
\item $A$ only depends on the values $e$ and $\rho$ of $H$ and $F$, 
\item $A$ is independent of $\II$,
\item $A$ generates a $1$-periodic flow $(\phi_A^t)$. 
\end{itemize}
Then, the pair $(\II,A)$ is  a pair of \emph{action variables} as defined in \cite{Du-80}  and we can construct a symplectic diffeomorphism $\Psi : \overset{\circ}{\VV}\rit \T^2\times B :z\ma (\th_i,\th_a, \II, A)$,  where $\overset{\circ}{\VV}=\VV\cap\{p_kq_k>0\}$ and $B$ is an open domain in $\R^2$ (see Appendix A for the construction in a general case).
As a consequence, if $\wti H:=H\circ \Psi\inv$ and if $(\ti\phi^t)$ is the Hamiltonian flow associated with $\wti H$ in $\T^2\times B$, then $\Psi\circ \phi_H^t=\ti\phi^t\circ \Psi$, for all $t\in \R$.

\begin{prop}\label{sectionsur Da}
Fix $\us>0$ and set $\ze_1:=\{0\}\times\{-\us\}\times [0,a]\subset \UU_1$. Assume that $X^H$ is transverse to $\phi_A([0,T], \ze_1)$.
Then $\phi_A([0,T], \ze_1)$ is a proper section for $\phi_H$ in $\DD_a$ associated with a $C^1$ function $\tau:\,]0,a]\rit \R_+^*$.
\end{prop}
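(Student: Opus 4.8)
The strategy is to reduce the statement to the action--angle model after which Proposition~\ref{SP_Minimal} (and its fibered version for action--angle systems on $\T^2\times O$) applies directly. Recall that in the partial neighborhood $\VV$ we have constructed a pair of action variables $(\II,A)$, hence a symplectic diffeomorphism $\Psi:\overset{\circ}{\VV}\rit\T^2\times B$, $z\ma(\th_i,\th_a,\II,A)$, conjugating $\phi_H$ on the open part $\overset{\circ}{\VV}$ to an action--angle flow $\ti\phi^t$ with Hamiltonian $\wti H=H\circ\Psi\inv$. Since the region $\DD_a$ sits inside $\overset{\circ}{\VV}$ together with its boundary polycycle, the plan is: first analyze $\phi_A([0,T],\ze_1)$ in the interior through the conjugacy $\Psi$, recognizing it there as a proper section of an action--angle system, then extend the statement to the limiting stratum $\rho=0$ by an explicit computation in the normal coordinates $(\vp_k,u,\rho)$ of Section~5.2.1.

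\textbf{The interior part.} In the coordinates $(\th_i,\th_a,\II,A)$, the curve $\ze_1$ is a vertical segment $\{(\th_i^0,\th_a^0)\}\times\{\rho\in\,]0,a]\}$ (after reading off constant angle values from the definition of $\ze_1$), and the flow $\phi_A$ is, by construction, the Kronecker flow in the $\th_a$ direction with unit frequency; hence $\phi_A([0,T],\ze_1)$ is exactly a set of the form $\{(\Th(r),r)+\pi_r(D)\}$ with $D=D_{0,1}$ the vertical rational direction in the $(\th_i,\th_a)$ torus, i.e.\ a proper section in the sense of the definition preceding Remark. Here the transversality hypothesis ``$X^H$ transverse to $\phi_A([0,T],\ze_1)$'' is exactly what guarantees $X^{\wti H}(r)\notin D$ for each $r$, so Proposition~\ref{SP_Minimal} applied fiberwise on each torus $\Tt_r$ shows that, for every $\rho\in\,]0,a]$, every orbit of $\phi_H$ through a point of $\DD_a$ at level $\rho$ meets $\phi_A([0,T],\ze_1)$ in forward and backward time, with a transition time $\tau(\rho)$ that depends only on $\rho$. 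The explicit formula $\tau=|x_2q-px_1|\inv$ from that proof, together with the smooth (indeed $C^1$) dependence of the frequency vector $X^{\wti H}$ on $\rho$ coming from the $C^1$ regularity of the action variables, gives that $\tau$ is $C^1$ on $\,]0,a]$ and positive there.

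\textbf{The boundary stratum and conclusion.} The remaining point is that the section and the transition function behave well as $\rho\to 0$, i.e.\ that each orbit of $\phi_H$ in $\DD_a$ lying on the polycycle itself still crosses $\phi_A([0,T],\ze_1)$ transversally in the prescribed way and that $\tau$ extends continuously. This I would check in the normal coordinates $(\vp_k,u,\rho)$ of $(\star)$: near each hyperbolic orbit $\Cc_k$ the flow is $\dot\vp_k=\om_k(\rho)$, $\dot u=\lam_k(\rho)\sqrt{u^2+\mu_k(\rho)}$, $\dot\rho=0$ with $\lam_k>0$, $\mu_k(0)=0$, $\mu_k'(0)>0$; the $u$--equation shows that an orbit started at $u=-\us<0$ reaches $u=+\us$ in finite time that stays bounded as $\rho\to 0$ (the integral $\int\! du/\sqrt{u^2+\mu_k(\rho)}$ converges even at $\rho=0$ on any interval away from $u=0$, and through $u=0$ it only has a logarithmic behavior that is integrated out against the $\vp$--motion), so the point leaves the chart $\UU_k$, travels along a cylinder $W_{k,k+1}$ where the dynamics is again action--angle with nonzero frequency, enters the next chart, and so on; tracking the $\vp$--coordinate through the $p$ charts and cylinders shows the orbit returns to a point with $\vp_1=0$, $u=-\us$, giving the transition time as a finite sum of the chart--crossing times and the cylinder--transit times. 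Each summand is $C^1$ in $\rho$ on $[0,a]$ (the chart times by the explicit antiderivative of the $u$--equation; the cylinder times by the action--angle formula and regularity of the action variables), so $\tau$ extends to a $C^1$ function on $[0,a]$ with $\tau(\rho)>0$; one checks $\Phi_H(]t_z^-,t_z^+[,z)\cap\phi_A([0,T],\ze_1)=\emptyset$ by noting that within one full transition the $\vp_1$--coordinate increases strictly from $0$ back to $1$ (mod $\Z$) monotonically, so $\ze_1$ is not hit in between. The main obstacle, and the step requiring the most care, is precisely this uniform control of the chart--crossing times near the hyperbolic orbits as $\rho\to 0$: one must show the logarithmic blow-up of the time spent near $u=0$ is exactly compensated so that the \emph{total} transition time stays bounded and $C^1$, which is why the hypotheses $\mu_k(0)=0$, $\mu_k'(0)>0$ (reflecting Bott nondegeneracy) and the positivity $\lam_k>0$ are essential; the transversality assumption on $X^H$ then handles the easy-to-state but necessary geometric point that $\phi_A([0,T],\ze_1)$ is genuinely a cross-section.
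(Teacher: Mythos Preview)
Your interior argument is essentially the paper's own proof: push everything through the symplectic diffeomorphism $\Psi$ to the action--angle chart $(\th_i,\th_a,\II,A)$, observe that there $\phi_A$ is the unit Kronecker flow in the $\th_a$--direction so that $\Psi(\phi_A([0,1],\ze_1))$ is a proper section of the type defined for action--angle systems with $D=D_{0,1}$, invoke Proposition~\ref{SP_Minimal} fiberwise, and read off the $C^1$ transition function $\tau$ on $]0,a]$ from the explicit formula and the smooth dependence of the frequency on $\rho$. This is precisely what the paper does, and it is all the proposition asks: the domain of $\tau$ in the statement is the \emph{open} interval $]0,a]$, and $\DD_a$ by definition excludes the polycycle $\rho=0$.

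Your ``boundary stratum'' paragraph is therefore unnecessary for this proposition---the continuation of $S$ and of $\tau$ to $\rho=0$ is the content of the separate five-step argument (Lemma~I) that follows in the paper. More seriously, the sketch you give for it is wrong. You describe the transition time as the time for an orbit of $\phi_H$ to go once around the polycycle (through all $p$ charts $\UU_k$ and the intervening cylinders, returning to $\vp_1=0,\ u=-\us$). That quantity is the \emph{period} $T(\rho)$ of the flow on the Liouville torus, and it diverges like $-\ln\rho$: each single chart-crossing time $\int_{-\us}^{\us}du/(\lam_k\sqrt{u^2+\mu_k})$ already blows up, and there is no ``compensation''. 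The point you are missing is that the section $S=\phi_A([0,1],\ze_1)$ is not the curve $\ze_1$ but the whole $2$--surface swept out by the $\phi_A$--orbits of $\ze_1$; on each torus it is the circle $\{\th_i=\th_i^0\}$, and the transition time $\tau(\rho)$ is the return time to \emph{that} circle, namely $|\partial H/\partial\II|^{-1}$---the time for $\vp_1$ to advance by $1$, not the time for the $u$--coordinate to make a full excursion around the polycycle. These are transverse homology classes on the torus and their return times behave completely differently; the paper obtains $\lim_{\rho\to0}\tau(\rho)=T$ (the common period of the hyperbolic orbits) in Step~5 by an argument of a quite different nature.
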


\begin{proof}
We begin with showing that $\Psi(\phi_A([0,1], \ze_1))$ is a proper section for $\ti\phi$ in $\Psi(\DD_a)$. 
The restrictions on $\DD_a$ of $\II$ and $A$ only depend on the values $\rho$ of  $F$ and we write $\II(\rho),\, A(\rho)$.
Since $\ze_1$ is only parametrized by $\rho$, $\Psi(\ze_1)$ as the following graph form:
\[
\Psi(D)=(\th_i(\II(\rho), A(\rho)), \th_a(\II(\rho), A(\rho)), \II(\rho), A(\rho)).
\]
Set $B_{e_0}:=\{(\II(\rho), A(\rho)))\,|\,\rho\in\, ]0,a]\}$.
Consider $A$ as a function on $\T^2\times B_{e_0}$ and let $\ti\phi_A$ be the Hamiltonian flow associated with $A$  in $\T^2\times B_{e_0}$.
Then
\begin{multline}
\ti\phi_A([0,T],  (\th_i(\II, A), \th_a(\II, A),  \II, A))\\
 =\bigcup_{\II, A}\left\{(\th_i(\II, A), \th_a(\II, A))+\pi_{\II, A}(D_{0,1})\right\}\times\{\II, A\}.
\end{multline}
Let us denote by $\ti\tau$ the transition function associated with $\Psi(\phi_A([0,1], \ze_1)$.
Obviously, since $\Psi$ conjugates the flows $\phi_H$ and $\ti\phi$, $\phi_A([0,1], \ze_1)$ is a proper section for $\phi_H$ with well defined transition function 
$\tau(f(z))=\ti\tau(\II(f(z)), A(f(z)))$, thanks to the transversality assumption . 
\end{proof}

We will now construct such an action variable $A$. Then we show that the proper section got in the previous proposition has a well defined continuation to the polycycle $\PP$, with a $C^1$ transition function $\tau$.
\vspace{0.2cm}

\noindent \textbf{Construction of $A$.} 
The construction of $A$ is based on the following lemma. 

\begin{lem}
There exists a $3$-dimensional submanifold $\Pi$ in $\VV$ which is transverse to $\ha\PP:=\bigcup_{e\in [e_0-\de_0,e_0+\de_0]}\PP_e$ and such that for any $1\leq k \leq p$, $\Pi\cap U_k:=\{\vp_k=0\}$.
\end{lem}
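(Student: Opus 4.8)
The plan is to obtain $\Pi$ by patching together the prescribed local slices $\{\vp_k=0\}$ near the hyperbolic orbits with connecting slices built over the separatrix cylinders of $\PP$. Throughout I use the reductions already made in Case~(4) (the canonical two-sheeted covers over nonorientable $\8$-levels and Klein bottles): they allow us to assume that every separatrix sheet of $\PP$ is two-sided, so that $\ha\PP$ carries a globally defined transverse (``energy'') normal direction in $M$ and admits tubular neighbourhoods with trivialised normal bundle.

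\emph{Local slices.} In the chart $(\vp_k,I_k,q_k,p_k)$ on $U_k$ of Proposition~\ref{coorhype} and Corollary~\ref{coorhype2}, the local invariant manifolds $W^\pm(\Cc\ke)$ lie in $\{q_kp_k=0\}$ and $\Cc\ke=\{q_k=p_k=0,\ I_k=\II_k(0,e)\}$; after shrinking $U_k$, the set $\ha\PP\cap U_k$ is thus the stratified set $\{q_kp_k=0\}$ with $I_k$ ranging over an interval, and each of its strata ($\{q_k=0\}$, $\{p_k=0\}$ and the core cylinder $\{q_k=p_k=0\}$) contains the orbit direction $\partial_{\vp_k}$. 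Since $T\{\vp_k=0\}=\mathrm{span}(\partial_{I_k},\partial_{q_k},\partial_{p_k})$ is a linear complement of $\R\,\partial_{\vp_k}$ in $TU_k$, it is transverse to each such stratum; hence $\Pi_k:=\{\vp_k=0\}\cap U_k$ is a $3$-dimensional $C^1$ submanifold transverse to $\ha\PP$ over $U_k$, the singular stratum included.

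\emph{Connecting slices and gluing.} Shrink the $U_k$ to be pairwise disjoint. Each separatrix cylinder $W$ of $\PP$ joins two hyperbolic orbits, say $\Cc_k$ and $\Cc_{k'}$, and removing $U_k\cup U_{k'}$ from $\ov W$ leaves a compact annulus $W^{\rm mid}\cong\T\times[0,1]$ with one boundary circle in $\partial U_k$ and the other in $\partial U_{k'}$; write $W(e)$ for the corresponding cylinder in $\PP_e$. Near the $U_k$-end the restriction $\vp_k|_{W(e)\cap U_k}$ is an orientation-preserving diffeomorphism onto $\T$, and likewise for $\vp_{k'}$ near the other end. As every circle of $W(e)$ is homotopic, inside $\ov{W(e)}$, both to $\Cc\ke$ and to $\Cc_{k',e}$, the maps $\vp_k|$ and $\vp_{k'}|$ have degree $1$ on the $\T$-factor and are homotopic; since $\Diff_+(\T)$ is connected there is a $C^1$ family of orientation-preserving diffeomorphisms of $\T$, over the $[0,1]$- and $e$-parameters, interpolating between $\vp_k|$ and $\vp_{k'}|$ and equal to $\vp_k$ (resp. $\vp_{k'}$) near the $U_k$-end (resp. $U_{k'}$-end). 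Pulling it back along the projection of a tubular neighbourhood $\Nn\subset\VV$ of $W^{\rm mid}$ onto $W^{\rm mid}$ produces a submersion $\beta:\Nn\to\T$ equal to $\vp_k$ on $\Nn\cap U_k$, to $\vp_{k'}$ on $\Nn\cap U_{k'}$, and whose restriction to $\ha\PP\cap\Nn$ is still a submersion, since its differential is already nonzero along the $\T$-direction of the annulus, which $\ha\PP$ contains. Set $\Pi_W:=\beta^{-1}(0)$. Finally, shrink once more so that $\Nn$ meets $U_j$ only for $j\in\{k,k'\}$ and the overlaps $U_j\cap\Nn$ coming from the cylinders incident to $\Cc_j$ are disjoint collars about the corresponding ends of $U_j$; on every overlap all pieces then coincide with $\{\vp_j=0\}$, so $\Pi:=\big(\bigcup_k\Pi_k\big)\cup\big(\bigcup_W\Pi_W\big)$ is a well-defined $3$-dimensional $C^1$ submanifold of $\VV$, transverse to $\ha\PP$, with $\Pi\cap U_k=\{\vp_k=0\}$.

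The main obstacle is the middle step: over each separatrix cylinder one must interpolate between the two angle functions $\vp_k$ and $\vp_{k'}$ so that the interpolant simultaneously (a) matches them on the nose near the two ends, so the patches fit together, (b) remains a submersion, and (c) restricts to a submersion along $\ha\PP$, so that its zero set is transverse to the whole singular family $\ha\PP$ and not just to the central fibre $\PP=\PP_{e_0}$. This rests on the homotopy observation that all circles of a connecting cylinder are homotopic to the adjacent hyperbolic orbits, together with the connectedness of $\Diff_+(\T)$; the orientability reductions are precisely what make the transverse ``energy'' direction, hence the tubular neighbourhoods $\Nn$, globally available.
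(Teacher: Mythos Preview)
Your argument is essentially correct but takes a genuinely different route from the paper's. You build $\Pi$ \emph{outward} from the local slices $\{\vp_k=0\}$, interpolating along each separatrix cylinder via the connectedness of $\Diff_+(\T)$ and the fact that every circle of the cylinder is homotopic to both adjacent hyperbolic orbits. The paper instead goes \emph{inward}: it starts from the global transverse section
\[
\ha\Pi:=\BB\big(\{0\}\times O\times\,]e_0-\de_0,e_0+\de_0[\big)
\]
handed to it by clause~(1) of the definition of a simple polycycle, observes that $\ha\Pi$ is transverse to $\ha\PP$ (since it is transverse to $F$ by hypothesis), and then flattens $\ha\Pi$ to $\{\vp_k=0\}$ inside each $U_k$ with a bump function, after first translating $\vp_k$ so that $\ha\Pi$ meets $\Cc_k$ at $\vp_k=0$. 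In the chart $(\vp_k,I_k,q_k,p_k)$ both $\ha\Pi$ and the flattened surface are graphs over $(I_k,q_k,p_k)$, so transversality to $\ha\PP$ survives the modification.

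The paper's proof is much shorter precisely because it exploits the simple-polycycle datum $\BB$ that you never invoke; your construction would in principle go through for a broader class of continuable polycycles, at the cost of the homotopy/interpolation step. One passage in your write-up is a bit loose: you interpolate over both the $[0,1]$-- and $e$--parameters, which produces a function on the $3$-dimensional family $\bigcup_e W(e)^{\rm mid}$, yet you then speak of pulling it back via the projection of a tubular neighbourhood of the $2$-dimensional annulus $W^{\rm mid}$. Moreover, for the resulting $\beta$ to equal $\vp_k$ literally on $\Nn\cap U_k$ you need the tubular projection there to have fibres contained in the level sets of $\vp_k$. Both points are routine to repair (take $\Nn$ to be a tubular neighbourhood of $\bigcup_e W(e)^{\rm mid}$, and near each end choose the projection so that $\vp_k$ is one of the transverse coordinates), but they deserve to be said explicitly.
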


\begin{proof}
Consider the submanifold $\ha\Pi:=\BB(\{0\}\times O\times ]e_0-\de_0,e_0+\de_0[)$ (see the definition of a simple polycycle). 
Let $k\in \{1\dots,p\}$. Since $\ha\Pi$ is transverse to $F$, $\ha\Pi$ is transverse to $\Cc_k$. We set $(\vp_k^0, I_k(e_0,0),0,0):=\ha\Pi\cap \Cc_k$.
Consider the symplectic diffeomorphism $(\vp_k, I_k,q_k,p_k)\ma (\vp_k-\vp_k^0, I_k,q_k,p_k)$ in $U_k$. We still denote by $\vp_k$ the first variable. In these new coordinates, $\Pi\cap \Cc_k:=(0, I_k(e_0,0),0,0)$. By transversality, there exists a neighborhood $V_k\subset U_k$ in which $\ha\Pi$ has the following graph form:
\[
\ha\Pi\cap V_k:=\{(\vp_k(I_k,p_k,q_k),I_k,p_k,q_k)\}.
\]
Consider the symplectic diffeomorphism 
\begin{equation}\label{coordu_kv_k}
(\vp_k, I_k,q_k,p_k)\ma (\vp_k, u_k:=\frac{1}{\sqrt{2}}(q_k-p_k),v_k:=\frac{1}{\sqrt{2}}(q_k+p_k)).
\end{equation}
Fix $\bu>0$ such that $\{(\vp_k,I_k,u_k,v_k)\in U_k\,|\,|u_k|\leq \bu\}\subset V_k$.
Let $\eta_k$ be a bump function  on $U_k$  with support in the domain $\{u\leq \bu\}\subset V_k$
and consider the submanifold $\Pi$ defined by
\[
\Pi\cap U_k:=\{(1-\eta_k)\vp_k(I_k,p_k,q_k),I_k,p_k,q_k)\},\quad 1\leq k\leq p,
\] 
and which coincides with $\ha\Pi$ outside the $U_k$.
One easily checks that $\Pi$ satisfies the hypotheses of the lemma.
\end{proof}

For $(\JJ,e)\in \JJ(\VV)\times H(\VV)\setm\big\{(\JJ(\rho(e),e)\,|\, e\in\,  ]e_0-\de_0,e_0+\de_0[\big\}$, we set 
\[
\ga_{\JJ,e}:=\Tt\lio\cap \Pi,
\]
where $\Tt\lio$ is the Liouville torus with $\JJ(\rho,e)=\JJ$.
For $z\in \VV$, we set $\ga(z):=\ga_{\JJ(z), H(z)}$.

The function 
\begin{equation}\label{actionI}
\begin{aligned}
A: & \:\:\VV  & \longrightarrow  & \:\:\R\\
      &\:\: z & \longmapsto & \:\:\int_{\ga(z)}\lam
\end{aligned}
\end{equation}
is well defined and $C^2$. Obviously, $A$ only depends on $e$ and $\rho$ and one immediately checks that $A$ is independent of $\II$.  By construction its vector field $X^A$ is $1$-periodic. 
We denote by $S$ the proper section given by proposition \ref{sectionsur Da} and by $\tau$ its associated transition function.

\vspace{0.3cm}

\noindent \textbf{Continuation of $S$ to  $\PP$.}
For $0<u<\bu$, and $1\leq k\leq p$, we define the surfaces $\Ga_k^-(u):=\{u_k=u\}\subset U_k\cap \DD_a$ and $\Ga_k^+(-u):=\{u_k=-u\}\subset U_k\cap \DD_a$. 
The continuation of $S$ in $\PP$ necessitates five steps.
\vspace{0.2cm}

\noindent $\bullet$ \textbf{Step 1:} \textit{For $1\leq k\leq p$, there exists  $u_k^*>0$ such that the Poincar\'e maps  associated with $\phi^A$ between the surfaces $\Ga_k^+(-u_k^*)$ and $ \Ga_k^-(u_k^*)$  are well defined and read}
\[
P_k(\vp_k,\us_k,\rho)=(\vp_k +\vth_k(\rho),\us_k, \rho)\quad 1\leq k\leq p,
\]
\textit{where $\vth_k:[0,a]\rit \R$ is $C^0$, $C^1$ on $]0,a]$ and satisfies $\lim_{\rho\rit 0}\vth_k(\rho)=0$}.

\begin{proof}
For $(e,\rho)\in H(\VV)\times F(\VV)$, $k\in \{1,\dots,p\}$, and $u\in\, ]0,\bu]$ we set
\[
\ga_{\rho,e}(k,u):=\ga_{\JJ(\rho,e),e}(k,u) :=(\ga_{\JJ(\rho,e),e}\cap\{[-u,u]\})\subset (\ga_{\JJ(\rho,e),e}\cap (U_k\cap\EE_e)),
\]
so that $\ga_{\rho,e}(k,u)$ is the part of $\ga_{\rho,e}$ limited by the sections $\Ga_k^\pm$ inside $U_k$.
Since $\rho\ma \JJ(\rho,e)$ is a diffeomorphism for fixed $e$, we will work with $\JJ$ in the rest of this proof and we  will write $\ga\je$ instead of 
$\ga_{\JJ(\rho,e),e}$.
In the coordinates $(\vp_k,I_k,u_k,v_k)$ introduced in (\ref{coordu_kv_k})  the $1$-form $\lam_k$ reads $\lam_k:=I_k d\vp_k +v_kdu_k$. Therefore,
\[
\int_{\ga_{\JJ,e}(k,u)}\lam_k=\int_{-u}^u\sqrt{s^2+2\JJ}ds.
\]
In particular, when $\JJ=0$:
\[
\int_{\ga_{0,e}(k,u)}\lam_k=\int_{-u}^u|s|ds=u^2.
\]
For $\ti{u},\us$ in $]0,\bu]$, we set 
\[
\Rr_{\ti u, \us}(\JJ,e):=\int_{\ga\je}\lam-\int_{\ga\je(1,\ti u)}\lam_1-\sum_{k=2}^p\int_{\ga\je(k,\us)}\lam_k.
\]
One immediately checks that $\Rr_{\ti u, \us}$ is $C^2$.
Then, writing $A$ as function of the variables $\JJ$ and $e$:
\[
A(\JJ,e):=\int_{\ga\je(1,\ti u)}\lam_1+\sum_{k=2}^p\int_{\ga\je(k,\us)}\lam_k +\Rr_{\ti u, \us}(\JJ,e).
\]
Set, for $e\in\, ]e_0-\de_0, e_0+\de_0[$,  $u_1(e):=\us+\frac{(e-e_0)}{2\us}\displaystyle\Dp{A}{e}(0,e_0)$. 
Then, one immediately checks that
\begin{equation}\label{R'=0}
\Dp{\Rr_{u_1(e), \us}}{e}(0,e_0)=\Dp{A}{e}(0,e_0)-2\us\frac{1}{2\us}\Dp{A}{e}(0,e_0)=0.
\end{equation}

Observe that $u_1(e_0)=u^*$. We set  $u_k^*=\us$, for $2\leq k \leq p$.
In the following, we will omit the lower indices $u_1(e), \us$ and write $\Rr(\JJ,e)$ instead of $\Rr_{u_1(e), \us}(\JJ,e)$.

In the coordinates, $(\vp_k, I_k,q_k, p_k)$ the vector field $X^A$, restricted to $U_k$, reads
\[
\hspace{-4.5cm}\dot\vp_k  =\Dp{\Rr}{e}(\JJ,e)\Dp{H}{I_k}(I_k,\JJ),\qquad\qquad \dot I_k=0
\]
\begin{multline*}
 \dot q_k  =\Dp{}{p_k}\int_{-u_1(e)}^{u_1(e)}\sqrt{s^2+2\JJ}ds +\sum_{\ell=2}^p\Dp{}{p_k}\int_{-\us}^{\us}\sqrt{s^2+2\JJ}ds
 \\
 +\Dp{\Rr}{\JJ}(\JJ,e)\Dp{\JJ}{p_k}(p_k,q_k)+\Dp{\Rr}{e}(\JJ,e)\Dp{H}{\JJ}(I_k,\JJ)\Dp{\JJ}{p_k}(p_k,q_k)
 \end{multline*}
 \begin{multline*}
\textcolor{white}{a}=\bigg(\int_{-u_1(e)}^{u_1(e)}\frac{ds}{\sqrt{s^2+2\JJ}}+ \sum_{\ell=2}^p\int_{-\us}^{\us}\frac{ds}{\sqrt{s^2+2\JJ}}\\+\Dp{\Rr}{\JJ}(\JJ,e)+\Dp{\Rr}{e}(\JJ,e)\Dp{H}{\JJ}(I_k,\JJ)\bigg)q_k,
  \end{multline*}
\begin{multline*}
 \dot p_k  =-\Dp{}{q_k}\int_{-u_1(e)}^{u_1(e)}\sqrt{s^2+2\JJ}ds +\sum_{\ell=2}^p\Dp{}{q_k}\int_{-\us}^{\us}\sqrt{s^2+2\JJ}ds
 \\
 +\Dp{\Rr}{\JJ}(\JJ,e)\Dp{\JJ}{q_k}(p_k,q_k)+\Dp{\Rr}{e}(\JJ,e)\Dp{H}{\JJ}(I_k,\JJ)\Dp{\JJ}{q_k}(p_k,q_k)
 \end{multline*}
 \begin{multline*}
\textcolor{white}{a}= -\bigg(\int_{-u_1(e)}^{u_1(e)}\frac{ds}{\sqrt{s^2+2\JJ}}+ \sum_{\ell=2}^p\int_{-\us}^{\us}\frac{ ds}{\sqrt{s^2+2\JJ}}\\+\Dp{\Rr}{\JJ}(\JJ,e)+\Dp{\Rr}{e}(\JJ,e)\Dp{H}{\JJ}(I_k,\JJ)\bigg)p_k.
  \end{multline*}
From now on, we will limit ourselves to the level $H=e_0$. Set
\[
\ka(\JJ):=\bigg(p\int_{-\us}^{\us}\frac{ds}{\sqrt{s^2+2\JJ}}+\Dp{\Rr}{\JJ}(\JJ,e_0)+\Dp{\Rr}{e}(\JJ,e_0)\Dp{H}{\JJ}(I_k,\JJ)\bigg)
\]
  and $R_k(\JJ):=\Dp{\Rr}{e}(\JJ,e_0)\Dp{H}{I_k}(I_k,\JJ)$. Then, in the coordinates $(\vp_k,q_k,p_k)$, the restriction of $X^A$ to $\DD_a$ reads:
  \[
  \dot \vp_k= R_k(\JJ),\quad \dot q_k=\ka(\JJ)q_k,\quad \dot p_k=-\ka(\JJ)p_k.
  \]
 Consider the renormalized vector field $X$ on $\DD_a$ defined by 
  \[
  X=\frac{1}{\ka(\JJ)}X^A.
  \] 
 Its flow $\phi$ has the same orbits as the flow $\phi_A$ associated with $X^A$, so the Poincar\'e maps between $\Ga_k^+(u_k^*)$ and 
 $\Ga_k^-(u_k^*)$ associated with $\phi$ and $\phi_A$ do coincide. 
 Observe that, if we introduce the transition time
  \[
 \quad T(\JJ):=\int_{-\us}^{\us}\frac{ds}{\sqrt{s^2+2\JJ}},\qquad 1\leq k\leq p,
  \]
then the map $P_k:z\ma \phi(T(\JJ),z)$  for  $z\in \Ga_k^+(u_k^*)$ is the Poincar\'e map associated with $X$ between $\Ga_k^+(u_k^*)$ and $\Ga_k^-(u_k^*)$.
  
Observe also that  $\JJ$ and  $A$ are in involution. Indeed, the orbits of $\phi_A$ are contained in the level sets $H=e, \JJ=J$. Therefore, $\JJ$ is also constant along the orbits of  $X$. Hence, one gets
\begin{multline}
P_k(\vp_k,\us,\rho)=\left(\vp_k+\int_0^{T(\JJ(\rho))}\frac{R_k(\JJ(\rho))}{\ka(\JJ(\rho))}dt,\us,\rho\right)\\
=\left(\vp_k+\frac{T(\JJ(\rho))R_k(\JJ(\rho))}{\ka(\JJ(\rho))},\us,\rho\right).
\end{multline}
Now
\[
\ka(\JJ):=\sum_{\ell=1}^p T(\JJ)+ B(\JJ),
\]
where $B: \JJ\ma\Dp{\Rr}{\JJ}(\JJ,e_0)+\Dp{\Rr}{e}(\JJ,e_0)\Dp{H}{\JJ}(I_k,\JJ)$ is a bounded function.
Since $\lim_{\JJ\rit 0}T(\JJ)=+\infty$ and $R_k(0)=0$, one gets $\lim_{\JJ\rit 0}\displaystyle\frac{T(\JJ)R_k(\JJ)}{\ka(\JJ)}=0$,
which concludes the proof with $\vth_k:\rho\ma \displaystyle\frac{T(\JJ(\rho,e_0))R_k(\JJ(\rho,e_0))}{\ka(\JJ(\rho,e_0))}$.
\end{proof}

In the following, we denote by $\Ga_k^\pm$ the surfaces $\Ga_k^\pm(u_k^*)$ and  by $P_k$ the Poincar\'e map associated with $\phi_A$ between 
$\Ga_k^+$ and $\Ga_k^-$. 
We denote by $D_k$ the subdomain of $\DD_a\cap U_k$ bounded by $\Ga_k^+$ and $\Ga_k^-$, that is, the domain $D_k:=\{|u|\leq \us_k\}$. Observe that $(\vp_k, u, \rho)$ is a system of coordinates in $D_\ell$.

For $1\leq k \leq p$, we call {\emph admissible arc}  on $\Ga_k^\pm$  a curve 
\[
\ze:=\{(\vp_k(\rho)),\mp\us,\rho))\,|\,\rho\in [0,a]\},
\]
which is $C^1$ on $]0,a]$ and $C^0$ in $[0,a]$.
\vspace{0.2cm}

\noindent \textbf{Step 2:} \textit{Let $\ze$ be an admissible arc on $\Ga_k^+$. Then $P_k(\ze)$ is an admissible arc on $\Ga_k^-$.
 Moreover, $\phi_A([0,1], \ze)\cap D_k$ has the following graph form}:
\[
\phi_A([0,1], \ze)\cap D_\ell:=\left\{\vp_k(u,\rho),u,\rho)\,|\, (u,\rho)\in[-u^*,u^*]\times[0,a]\right\}.
\]

\begin{proof}
Let $\ze$ be an admissible arc on $\Ga_k^+$. Then, $P_k(\ze):=(\vp_k(\rho)+\vth_k(\rho),\us, \rho)$. Now, by step 1, the function $[0,a]\rit \R:\rho\ma \vth_k(\rho)$ is $C^0$ on $[0,a]$ and $C^1$ on $]0,a]$, so $P_k(\ze)$ is an admissible arc.

To see the second part, we first observe that, in $D_k$, $\dot u>0$. Then for any $u_0\in [-\us,\us]$, the Poincar\'e map (associated with $\phi_A)$ between $\Ga_k^+$ and the surface $\Ga(u_0):=\{u=u_0\}$ is well defined. As before, this Poincar\'e map coincides with the Poincar\'e map associated with the flow $\phi$ and we denote by $T_{k,u_0}$ the associated time of the last one.
Then
\[
T_{k,u_0}:=\int_{-\us}^{u_0}\frac{ds}{\sqrt{s^2+2\JJ}}.
\]
which yields
\begin{multline}
\phi_A([0,1], \ze)\cap D_k:\\=\left\{\vp_k(\rho)+\frac{T_{k,u}(\JJ(\rho,e_0))R(\JJ(\rho,e_0))}{\ka(\JJ(\rho,e_0))},u,\rho)\,|\, (u,\rho)\in [-\us,\us]\times[0,a]\right\}.
\end{multline}
As before, one immediately check that 
\[
(u,\rho)\ma \vp_k(\rho))+\displaystyle\frac{T_{k,u}(\JJ(\rho,e_0))R(\JJ(\rho,e_0))}{\ka(\JJ(\rho,e_0))}
\]
is continuous on $[0,a]$ and $C^1$ on $]0,a]$.
\end{proof}
\vspace{0.2cm}

\noindent \textbf{Step 3:} \textit{The flow $\phi_A$ induces a Poincar\'e map $P_{k,k+1}$  between $\Ga_k^-$ and $\Ga_{k+1}^+$. Moreover, if $\zeta$ is an admissible arc on $\Ga_k^-$, $P_{k,k+1}(\ze)$ is an admissible arc on $\Ga_{k+1}^+$.}

\begin{proof}
First, we show that the flow $\phi_H$ defines a Poincar\'e map between $\Ga_k^+$ and $\Ga_k^-$ with associated transition-time $\sig_{k,k+1}$ that only depends on $\rho$.

We set $C_k^+=\Ga_k^+\cap W_k^+$ and $C_k^-=\Ga_k^-\cap W_k^+$. 
Note that the $\om$-limit set of $C_k^-$ with respect to $\phi_H$ is the hyperbolic orbit  $\Cc_{k+1}$. 
Now, since $C_k^-$ and $\Cc_{k+1}$ are not in the same connected component of $W_k^-\setm C_{k+1}^+$,  for all $z\in C_k$ there exists $\sig(z)>0$ such that $\phi_H(\sig(z),z)\in C_{k+1}^+$. 
Since $\dot{u}=\lam_k(\rho)\sqrt{u^2+\mu_k(\rho)}>0$ in $\DD_a\cap\UU_{k}$, the intersection time $\sig(z)$ is unique.
In the same way, for any $z\in C_{k+1}^+$, there exists a unique $\sig(z)<0$ such that $\Phi_H(\sig(z),z)\in C_{k}^-$. 
Therefore the Poincar\'e map between $C_k^-$ and $C_{k+1}^+$ is well defined. By compactness of $C_k^-$, there exist $0<t_1<t_2$ such that the associated  transition time $\sig$ takes its values in $[t_1,t_2]$.

One easily deduces from the previous study that, for $a>0$ small enough, the Poincar\'e map between $\Ga_k^-\cap\{\rho\in [0,a]\}$ and $\Ga_{k+1}^+\cap\{\rho\in [0,a]\}$ is well defined. We denote by $\sig_{k,k+1}$ its transition time. The function $\sig_{k,k+1}$ is smooth. 

Now, observe that the sections $\Ga_{k}^-$ and $\Ga_{k+1}^+$ are invariant under the flow $(\phi_{\II}^t)$ associated with the first integral $\II$.
 Indeed, the flow  $X^{\II}$ reads (in the variables $(\vp_k,p_k,q_k)$): $\dot\vp_k=1,\, \dot p_k=0=\dot q_k$, which yields
\[
\dot\vp_k=1,\quad \dot u=0,\quad \dot \rho=0,
\]
Moreover, since $\phi_H$ and $\phi_{\II}$ commute, for any $t>0$ and any $\vp_k\in \T$:
\begin{multline}
\phi_H(\sig_{k,k+1}(\vp_k,\us,\rho),(\vp_k+t,\us,\rho))\\=\phi_H(\sig_{k,k+1}(\vp_k,\us,\rho),\phi_{\II}(t,(\vp_k,\us,\rho)))\\
=\phi_{\II}(t,\phi_H(\sig_{k,k+1}(\vp_k,\us,\rho),(\vp_k,\us,\rho)))
\end{multline}
which belongs to $\Ga_{k+1}^+$.
Therefore $\sig(\vp_k+t,\us,\rho)=\sig(\vp_k,\us,\rho)$. We set $\sig_{k,k+1}(\rho):=\sig_{k,k+1}(\vp_k,\us,\rho)$, for any $\vp_k\in \T$.

We denote by $\overset{\circ}{D}_{k,k+1}$ the connected component of $\DD_a\setm(\Ga_k^-\cup\Ga_{k+1}^+)$ that does not contain $\Cc_k$ and we set $D_{k,k+1}:=\overset{\circ}{D}_{k,k+1}\cup\Ga_k^-\cup\Ga_{k+1}^+$.

Set $\Ga_k:=\{u=0\}\subset D_k$. The transition time $\sig_k$ (associated with $\phi_H$) between $\Ga_k\setm\{\rho=0\}$ and $\Ga_k^+\setm\{\rho=0\}$ is well defined and is independent of $\vp_k$. Indeed, 
\[
\sig_k(\rho):=\int_{-\us}^0\frac{ds}{\lam_k(\rho)\sqrt{s^2+\mu_k(\rho)}}.
\]
Since $\lim_{\rho\rit 0}\sig_k(\rho)=+\infty$, there exists $a>0$ small enough so that for any $1\leq k\leq p$, $\phi_H([0,1], \Ga_k^+)\subset \{u\in ]-\us, 0]\}$. Therefore
the map
\[
\begin{array}{lll}
\Ga_k^-\times [0,2] &\rit & D_{k,k+1}\cup\phi_H(]0,1], \Ga_k^+)\\
 ((\vp_k,\us,\rho),x) &\ma & \phi_H(x\sig_{k,k+1}(\rho),(\vp_k,\us,\rho))
\end{array}
\]
 is a diffeomorphism.
In the following, we consider $(\vp_k,x,\rho)$ as a system of coordinates in $D_{k,k+1}\cup\phi_H(]0,1], \Ga_k^+)$. 

As before, we will work with the vector field $X$ and its flow $\phi$ instead of $X^A$ and $\phi_A$, and we will show that $\phi$ induces a Poincar\'e map between $\Ga_k^-$ and $\Ga_{k+1}^+$.
Since $\rho$ is invariant under $\phi$, the vector field $X$ reads:
\[
X(\vp_k,x,\rho):=(X_\vp(\vp_k,x,\rho), X_x(\vp_k,x,\rho), 0),
\]
where $X_\vp$ and $X_x$ are $C^1$ functions.

Now, let $u\in [\us, \bu]$. Then, 
\[
x(\vp_k,u,\rho)=\frac{1}{\sig_{k,k+1}(\rho)}\int_{\us}^u\frac{ds}{\sqrt{s^2+2\JJ(\rho)}},
\]
that is, $x$ is a strictly increasing function of $u$ in $D_{k,k+1}\cap \UU_k$. Hence, since $\dot u_k=p_k+q_k>0$ on $\Ga_k^-$,
\begin{equation}\label{X_x(0)nonnul}
X_x(\vp_k,0,\rho)>0,\quad \forall\, (\vp_k,\rho)\in \T\times [0,a].
\end{equation}
Moreover, since $x$ is independent of $\vp_k$ in the domain $\{u\in [\us, \bu]\}$, 
for any $\vp_k \in \T$, $X_x(\vp_k,0, \rho)=X_x(0,0,\rho)$.

With $x_0\in [0,1]$, we associate the diffeomorphism
\[
\begin{array}{llll}
\eta_{x_0} : & D_{k,k+1} & \rit  & D_{k,k+1}\cup\phi_H([0,1], \Ga_k^+)\\
 & (\vp_k,x,\rho) & \ma & (\vp_k,x+x_0,\rho).
\end{array}
\]
Observe that 
for any $\vp_k\in \T$, $D_{(\vp_k,0,\rho)}\eta_{x_0}(X(\vp_k,0,\rho)=X(\vp_k,x_0,\rho))$. So,  for any $\vp_k\in \T$, $X(\vp_k,x_0,\rho)=X(0,x_0,\rho)$. This property holds for any $x\in [0,1]$. As a consequence, using (\ref{X_x(0)nonnul}), one sees that  there exists $\be>0$ such that, for all $z\in D_{k,k+1}$, $X_x(z)>\be$.
Therefore, if we set
\[
t_{k,k+1}(\rho)=\int_0^1\frac{ds}{X_x(0,s,\rho)},
\]
the map 
\[
\begin{array}{llll}
P_{k,k+1}: & \Ga_k^- & \rit  & \Ga_{k+1}^+\\
 & (\vp_k,0,\rho) & \ma & \phi_A(t_{k,k+1}(\rho),\vp_k,0,\rho)).
\end{array}
\]
is a Poincar\'e map between $\Ga_k^-$ and $\Ga_{k+1}^+$.
We denote by $\vp_A:\T\times[0,a]\rit\T$ the smooth function defined by  $\phi_A(t_{k,k+1}(\rho)(\vp_k,0,\rho)=(\vp_A(\vp_k,\rho),1,\rho)$ and by $\vp_H:\T\times[0,a]\rit\T$ the smooth function defined by  $\phi_H(\sig_{k,k+1}(\rho)(\vp_k,\us,\rho)=(\vp_H(\vp_k,\rho),-\us,\rho)$.
Hence, 
\[
P_{k,k+1}(\vp_k,\us,\rho)=(\vp_H(\vp_A(\vp_k,\rho),\rho),-\us,\rho).
\]
As a consequence, if $\ze:=\{(\vp_k(\rho),\us, \rho)\,|\, \rho\in [0,a]\}$ is an admissible arc on $\Ga_k^-$, its image $P_{k,k+1}(\vp_k(\rho),\us,\rho)=(\vp_H(\vp_A(\vp_k(\rho),\rho),\rho),-\us,\rho)$ is  an admissible on $\Ga_{k+1}^+$.
\end{proof}

Consider now the surface $S:=\phi_A([0,1], \ze_1)$ as defined in proposition \ref{sectionsur Da}. Observe that, according to the previous graph form
(Step 2),  the vector field $X^H$ is transverse to $S$ inside the domains $D_k$. By construction of $S$, this immediately implies that $X^H$ is everywhere transverse to $S$. Indeed, the action-angle form proves that it is enough that $X^H$ be transverse to $S$ at only one point in each
Liouville torus of the regular foliation. So $S$ is a proper section for $X^H$.

\vspace{0.2cm}

\noindent \textbf{Step 4:} \textit{The surface $S$ can be continuated to $\PP$ as a continuous surface}.

\begin{proof}
Using alternatively Steps 1 and 3, we see that $\ze_k^\pm:=\phi_A([0,1],\ze_1)\cap \Ga_k^\pm$ is an admissible arc for any $1\leq k\leq p$. 

Using step 2, one sees that $\phi_A([0,1],\ze_1)\cap D_k$ is a continuous submanifold with the graph form
$
\phi_A([0,1], \ze_1)\cap D_k:=\left\{\vp_k(u,\rho),u,\rho)\,|\, (u,\rho)\right\}.
$

It remains to check that $\phi_A([0,1], \ze_1)\cap D_{k,k+1}$ is a continuous submanifold.
For $\rho\in [0,a]$, we write $\ze_k^-(\rho):=(\vp_k(\rho), \us, \rho)$.
Observe that
\[
\phi_A([0,1], \ze_1)\cap D_{k,k+1}=\bigcup_{\rho\in [0,a]}\bigcup_{x\in [0,1]}\phi_X(xt_{k,k+1}(\rho),\ze_k^-(\rho))
\]
The map $[0,a]\times[0,1]\rit S : \phi_X(xt_{k,k+1}(\rho),\ze_k^-(\rho))$ is a homemorphism onto $\phi_A([0,1], \ze_1)\cap D_{k,k+1}$, which is $C^1$ on $[0,1]\times ]0,a]$. This proves that $S$ admits a $C^0$ continuation on $\rho=0$.
\end{proof}

We still denote by $S$ the continuation of $S$ in $\ov{\DD}_a$.
\vspace{0.2cm}

\noindent \textbf{Step 5:} \textit{The surface $S$ is a proper section for $\phi_H$ associated with a $C^1$ function $\tau:[0,a]\rit \R_+^*$}.\index{$\tau(\rho)$}

\begin{proof}
We will first show that the transition function $\tau:\,]0,a]\rit \R$ defined in proposition \ref{sectionsur Da} has a well defined $C^1$ continuation on $[0,a]$. Then we will prove that this function is a transition function for $S$ with respect to $\phi_H$.

We consider the lift $\R\times [-\us, \us]\times [0,a]$ of $\T\times [-\us, \us]\times [0,a]\subset \UU_k$. We still denote by $\phi_H$,  $\phi_A$ and $\phi$ the lifted Hamiltonian flows  on $\R\times [-\us, \us]\times [0,a]$. Let us denote by $T$ the common period of the orbits $\Cc_k$.
We set $X^H(z):=(X^H_\vp(z), X^H_u(z), 0)$.

By continuity of $X^H$, we can assume that $\us$ and $a$ are small enough so that for any $z\in\R\times [-\us, \us]\times [0,a]$,\, $X_\vp^H(z)\in [\frac{2}{T}, \frac{1}{2T}]$.
Hence, there exists $\al>0$ such that
\[
\phi_H(\dfrac{T}{2},\ze_1)\subset \{\vp_1\leq 1-\al\},\quad \phi_H(2T,\ze_1)\subset \{\vp_1\geq 1+\al\}.
\]
On the other hand,  we can also assume that $a$ is small enough so that there exists $u_0\in \,]0, \us]$ such that
\[
\phi_H([0,2T], \ze_1)\subset \{u\in [-\us,-u_0]\}.
\]
Let $\ti\ze_1:=\ze_1+(1,0,0)\subset\R\times [-\us, \us]\times [0,a]$ and set  $\ti\ze_1(\rho):=(1, -\us, \rho)$.
Finally set $t(\rho):=\displaystyle\int_{-\us}^{-u_0}\frac{ds}{ \sqrt{u^2+2\JJ(\rho)}}$. 
By construction,  since in the coordinates $(\vp_1, u,\rho)$ the vector field $X$ reads
\[
\dot \vp_1=\frac{R(\JJ(\rho))}{\ka(\JJ(\rho))},\quad \dot u= \sqrt{u^2+2\JJ(\rho)}, \quad \dot \rho =0,
\]
for any $\rho\in [0,a]$, $\phi_A(t(\rho), (1, -\us, \rho))\in \{u=u_0\}$.
Let $\wti S_1$ be the smooth surface 
\[
\wti S_1:=\bigcup_{\rho\in [0,a]}\phi_X([0,t(\rho)], \ti\ze_1(\rho)).
\]
By step 2, $\wti S_1$ has the following graph form:
\[
\wti S_1:=\{(\vp_1(u,\rho), u, \rho)\,|\, (u,\rho)\in [-\us, -u_0]\times [0,a]\}
\]
Since the function $\rho\rit t(\rho)$ is decreasing, therefore 
\[
\wti S_1:=\phi_X([0,t(0)], \ti\ze_1)\cap \{u\in [-\us,-u_0]\}.
\]
Moreover, since $\lim_{\rho\rit 0}\frac{R(\JJ(\rho))}{\ka(\JJ(\rho))}=0$, there exists $a>0$ small enough so that for any $\rho\in [0,a]$, $t(0)\displaystyle\frac{R(\JJ(\rho))}{\ka(\JJ(\rho))}<\al$.
As a consequence, 
\[
\wti S_1\subset B:= [1-\al,1+\al]\times [-\us, -u_0]\times [0,a],
\]
and  $B\setm\wti S_1$ has two connected components.
Hence for any $\rho \in [0,a]$, $\phi_H(\R,(0, \-us, \rho))$ cut $\wti S_1$ once and only once. 
Let $\tau:[0,a]\rit \R^+$ be such that
$
\phi_H(\tau(\rho), (0,-us, \rho)\in \wti S_1.
$
Then $\tau$ is the restriction to $\{-\us\}\times [0,a]$ of the transition time associated to the Poincar\'e map (with respect to $\phi_H$) between the smooth surfaces $\{\vp=0\}$ and $\wti S_1$. This  map  is  smooth  since the both surfaces are smooth and transverse to $\phi_H$. Hence, $\tau$ is  smooth on $[0,a]$.
Finally, if $\pi :\R\rit \T$ is the canonical projection, $\pi (\wti S_1)\subset S$. 
Obviously, the function $\tau$ defined above coincide in $S\cap\{\rho>0\}$ with the transition function $\tau$ defined in proposition \ref{sectionsur Da}. 
It remains to check, that for any $z\in S\cap\PP$, the transition time $\tau(z)$ of $z$ is $\tau(0)$.
Fix $z\in S\cap \PP$. Let $\ga :[0,a]\rit S$ be a continuous map such that $\ga(0)=z$ and for any $r\in \,]0,a]$, $\ga(r)\subset S\cap\{\rho=r\}$.
Then for any $\rho\in\, ]0,a]$, $\phi_H(\tau(\rho), \ga(\rho))\in S$. Since $S$ is closed, by continuity of $\phi_H, \tau$ and $\ga$, $\ga(z)\in S$.
\end{proof}

\subsubsection{Construction of the conjugacy between $\wp$ and $\psi^\al$: proof of Lemma II}

This section is devoted to the proof of lemma II. Let $S$ be a proper section with time $\tau$ as in lemma I. In each $\UU_k\cap \ov{\DD_a}$, $S$ has the following graph form
\[
S:=\{(\vp_k(u,\rho)\,w,\rho)\,|\, \vp\in \T, (u,\rho)\in[-\ov{u},\ov{u}]\times [0,a]\}.
\]
The proof consists in the construction of a suitable planar $p$-model $\psi$. Our strategy is the following.
\vspace{0.15cm}

\noindent $\bullet$ We first construct a \emph{fundamental domain}  for the map $\wp$ in each subdomain $\UU_k\cap S$ of $S$. A fundamental domain means a domain $\De_k$ bounded by a ``vertical'' curve $D_k$ with equation $u=u_0$ in $S$ and its  image $\wp\inv(D_k)$ (and the natural horizontal boundaries).
\vspace{0.15cm}

\noindent $\bullet$ We show that there exists an integer $m_k$ such that the domains $\De_k,\, \wp(\De_k),\,$ $\dots,\, \wp^{m_k}(\De_k)$ cover the connected component $R_k$ of $S$ between $S\cap \UU_k$ and $S\cap\UU_{k+1}$ and such that $\wp^{m_k}(\De_k)\subset S\cap\UU_{k+1}$.
\vspace{0.25cm}

This being done, the construction of the planar $p$-model necessitates three steps.
\vspace{0.2cm} 

\noindent $\bullet$ For $1\leq k\leq p$, we construct a vector field $X_k$ with associated flow $\psi_{(k)}$ in a suitable neighborhood $\OO_k$ of the  point $(\kp,0)\in \T\times [0,1]$, such that there exists a homeomorphism $\chi_k$ between $\De_k$ and a fundamental domain  of $X_k$.
\vspace{0.15cm}

\noindent $\bullet$ For $1\leq k\leq p$, we construct a vector field of $\ha{X}_k$ with associated flow $\ha{\psi}_{(k)}$ in  a suitable  subdomain $\RR_k$ of $\T\times [0,1]$ such that we can glue together the flows $\psi_{(k)}$ and $\ha{\psi}_{(k)}$ (for $1\leq k\leq p$) to get a flow $\psi$ on $\T\times[0,1]$ with a time map $\psi^{\al}$ conjugated to $\wp$.
\vspace{0.15cm}

\noindent $\bullet$ We check that the flow $\psi$ is a planar $p$-model.

\paraga{\it Construction of the fundamental domains $\De_k$.}
The construction of the fundamental domains $\De_k$\index{$\De_k$} is based on the construction in each domain $\UU_k$ of a pair of sections  that are transverse both  to the flow and to $S$.

\begin{lem} There exists $\us\in [0,\bu]$ such that, for any $1\leq k\leq p$, the sections $\Sig_k^+ :=\{u=-\us\}$ and  $\Sig_k^-:=\{u=\us\}$ satisfy the following conditions:

\noindent $\bullet$ \textbf{(C1):} The Poincar\'e return map  between $\Sig_k^+\setm\{\rho=0\}$ and $\Sig_k^-\setm\{\rho=0\}$ is well defined and its associated time $\tau_k$ does not depend on $\vp$ and is a decreasing function of $\rho$.
\vspace{0.1cm}

\noindent $\bullet$ \textbf{(C2):} The Poincar\'e return map  between   $\Sig_k^-$ and $\Sig_{k+1}^+$ is well defined and its associated time $\sig_{k,k+1}$  does not depend on $\vp$ and is a decreasing function of $\rho$.\index{$\sig_{k,k+1}$}
\end{lem}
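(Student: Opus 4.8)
The plan is to work inside each $\UU_k$ with the normal form $(\star)$ of $\phi_H$, namely
\[
\dot\vp_k=\om_k(\rho),\qquad \dot u=\lam_k(\rho)\,\sqrt{u^2+\mu_k(\rho)},\qquad \dot\rho=0,
\]
where $\lam_k$ is positive $C^1$ and $\mu_k$ is positive $C^1$ with $\mu_k(0)=0$, $\mu_k'(0)>0$; shrinking $a$ once and for all we may assume that each $\mu_k$ is increasing on $[0,a]$. Fix $\us\in\,]0,\ov u[$, to be adjusted later, and note that $\dot u>0$ on $\{|u|\le\ov u\}$ as soon as $\rho>0$. For (C1): for $\rho>0$ the orbit of $\phi_H$ through any point of $\Sig_k^+=\{u=-\us\}$ reaches $\Sig_k^-=\{u=\us\}$ in the finite time
\[
\tau_k(\rho)=\int_{-\us}^{\us}\frac{du}{\lam_k(\rho)\,\sqrt{u^2+\mu_k(\rho)}}=\frac{2}{\lam_k(\rho)}\,\Argsh\!\left(\frac{\us}{\sqrt{\mu_k(\rho)}}\right),
\]
and the transition map reads $(\vp_k,-\us,\rho)\mapsto(\vp_k+\om_k(\rho)\tau_k(\rho),\us,\rho)$; in particular $\tau_k$ depends on $\rho$ only. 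Differentiating the closed form, $\tau_k'(\rho)$ is the sum of a term from the $\Argsh$ factor, equivalent to $-\mu_k'(0)/(\lam_k(0)\mu_k(\rho))$ and hence tending to $-\infty$ as $\rho\to0$, and a term from $1/\lam_k$ of size only $O\big(\log(1/\mu_k(\rho))\big)$. The first term dominates, so $\tau_k'<0$ on $\,]0,a]$ after shrinking $a$; since there are finitely many $k$, a single shrinking works.

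For (C2), the existence (for $a$ small) of a Poincar\'e transition map from $\Sig_k^-$ to $\Sig_{k+1}^+$ and the fact that its transition time $\sig_{k,k+1}$ is independent of $\vp_k$ are obtained as in Step~3: the surfaces $\{u_k=\mathrm{const}\}$ are invariant under the $1$-periodic flow $\phi_\II$ of $\II$, which commutes with $\phi_H$ and runs over each circle $\{u_k=\mathrm{const},\ \rho=\mathrm{const}\}$, so the transition time is $\phi_\II$-invariant; its definition on all of $\Sig_k^-\cap\{\rho\in[0,a]\}$ follows from the compactness argument of Step~3. The only new point is the monotonicity of $\sig_{k,k+1}$, and for this one splits
\[
\sig_{k,k+1}(\rho)=A_k(\rho)+C_{k,k+1}(\rho)+B_{k+1}(\rho),
\]
where $A_k(\rho)=\int_{\us}^{\ov u}\frac{du}{\lam_k(\rho)\sqrt{u^2+\mu_k(\rho)}}$ is the time in $\UU_k$ from $\Sig_k^-$ to the exit face $\{u=\ov u\}$, $B_{k+1}(\rho)=\int_{-\ov u}^{-\us}\frac{du}{\lam_{k+1}(\rho)\sqrt{u^2+\mu_{k+1}(\rho)}}$ is the time in $\UU_{k+1}$ from the entrance face $\{u=-\ov u\}$ to $\Sig_{k+1}^+$, and $C_{k,k+1}(\rho)$ is the time spent in the compact part of the heteroclinic tube, away from all the $\Cc_j$. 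Since $\us>0$ keeps $A_k,B_{k+1}$ away from the singular locus $u=0$, they extend to $C^1$ functions on $[0,a]$; and on the compact tube $\phi_H$ is nonsingular and transverse to the two faces, so $C_{k,k+1}$ is $C^1$ on $[0,a]$ with $|C_{k,k+1}'|\le M$ for a constant $M$ independent of $\us$.

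The main obstacle is that, unlike $\tau_k$, the time $\sig_{k,k+1}$ stays bounded as $\rho\to0$ (it tends to the finite heteroclinic crossing time), so no divergent term enforces the monotonicity; the remedy is to exploit the freedom in $\us$. A short computation with the elementary integrals above gives, for $\rho$ near $0$,
\[
A_k'(0)=-\frac{\mu_k'(0)}{4\,\lam_k(0)}\Big(\frac{1}{\us^{2}}-\frac{1}{\ov u^{2}}\Big)-\frac{\lam_k'(0)}{\lam_k(0)^{2}}\,\log\frac{\ov u}{\us},
\]
and the analogous identity for $B_{k+1}'(0)$. Hence $A_k'(0)+B_{k+1}'(0)$ tends to $-\infty$ like $-c/\us^{2}$ with $c>0$ as $\us\to0$, while $|C_{k,k+1}'|\le M$ uniformly in $\us$; so choosing $\us$ small enough, simultaneously for all $k$, yields $\sig_{k,k+1}'(0)<0$ for every $k$, and by continuity of $\sig_{k,k+1}'$ in $\rho$ a last shrinking of $a$ gives $\sig_{k,k+1}'<0$ on $[0,a]$. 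Such a $\us$ lies in $]0,\bu]$, and together with this $a$ it realizes both (C1) and (C2) for every $k$.
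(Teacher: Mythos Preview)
Your proof is correct and follows essentially the same strategy as the paper's. For (C1) both you and the paper compute the closed form $\tau_k(\rho)=\frac{2}{\lam_k(\rho)}\Argsh\big(\us/\sqrt{\mu_k(\rho)}\big)$ and observe that $\tau_k'(\rho)\to-\infty$ as $\rho\to0$. For (C2) both arguments split the transition time into a piece with $\rho$-derivative uniformly bounded independently of $\us$ and a piece whose derivative at $\rho=0$ behaves like $-c/\us^2$, then choose $\us$ small enough. The only cosmetic difference is the choice of reference sections: the paper introduces an auxiliary intermediate level $u_0\in\,]0,\bu]$ and writes $\tilde\sig_{k,k+1}=g+\sig_{k,k+1}$ with $\sig_{k,k+1}$ the transition time between the $u_0$-sections, whereas you go all the way to the outer faces $\{u=\pm\bu\}$ and write $\sig_{k,k+1}=A_k+C_{k,k+1}+B_{k+1}$; your decomposition is slightly cleaner in that it separates the contributions of the two neighborhoods $\UU_k$ and $\UU_{k+1}$ (with their distinct $\lam_k,\mu_k$), which the paper's formula lumps together.
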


\begin{proof}
Let us prove (C1). Fix $u_0\leq \bu$ and consider the surfaces $\Sig_k^\pm$ defined as above for $1\leq k \leq p$. By $(\star)$, one immediately checks that they are transverse to the flow and that the transition time between $\Sig_k^+\setm\{\rho=0\}$ and $\Sig_k^-\setm\{\rho=0\}$ is given by 
\[
\tau_k(\rho)=\frac{2}{\lam(\rho)}\Argsh\frac{u_0}{\sqrt{\mu(\rho)}}.
\]
Moreover, by direct computation, one sees that $\lim_{\rho\rit 0}\tau_k'(\rho)=-\infty$.
As a consequence, if $a$  is small enough, $\tau'_k(\rho)<0$ for any $\rho\in [0,a]$. In the following, we assume that $a$ is small enough so that, for any $1\leq k\leq p$, $\tau_k$ is decreasing on $[0,a]$ and (C1) is realized.

In Step 3 of the continuation of $S$, we proved the existence of a Poincar\'e map between $\Sig_k^-$ and $\Sig_{k+1}^+$ with associated transition time $\sig_{k,k+1}$ that only depends on $\rho$.
It remains to check the decreasing condition on the time $\sig_{k,k+1}$. 
Note that the function $\rho\ma\sig_{k,k+1}'(\rho)$ is uniformely bounded on $[0,a]$.
Fix $0<\us<u_0$ and consider, for $1\leq k \leq p$, the sections $\Sig_k^+(\us) :=\{u=-\us\}$ and $\Sig_k^-(\us) :=\{u=\us\}$. Let $\ti{\sig}_{k,k+1}$ be the transition time between $\Sig_k^-(\us)$ and $\Sig_{k+1}^+(\us)$. Then
\[
\ti{\sig}_{k,k+1}(\rho):=\frac{2}{\lam(\rho)}\left(\Argsh\frac{u_0}{\sqrt{\mu(\rho)}}-\Argsh\frac{\us}{\sqrt{\mu(\rho)}}\right)+\sig_{k,k+1}(\rho).
\]
Let $g:\rho\ma \frac{2}{\lam(\rho)}\left(\Argsh\frac{u_0}{\sqrt{\mu(\rho)}}-\Argsh\frac{\us}{\sqrt{\mu(\rho)}}\right)$.
By elementary computation one sees that:
\[
g'(\rho)\sim_{\rho\rit 0}\frac{\lam(\rho)\mu'(\rho)}{4}\left(\frac{1}{u_0^2}-\frac{1}{(\us)^2}\right).
\]
Therefore, for $\us$ small enough $\rho\rit \ti{\sig}_{k,k+1}(\rho)$ is decreasing and (C2) is also realized. Obviously, one can choose $\us$ small enough  so that (C1) and (C2) are realized for any $1\leq k\leq p$. 
\end{proof}

\begin{rem}\label{sig_k,k+1borneInf}
We can assume that $\us$ is small enough so  that, for any $k\in \{1,\cdots, p\}$ and any $\rho\in [0,a]$, $\sig_{k,k+1}(\rho)>2$.
\end{rem}

For $1\leq k\leq p$, we set $\de_k:=S\cap \Sig_k^-:=\{(\vp_k(\us,\rho)\,|\, \rho\in [0,a]\}$. Since for all $k$, $\lim_{\rho\rit 0}\tau_k(\rho)=+\infty$, there exists $a>0$ small enough such that for any $\rho\in [0,a]$ and any $k$, $\tau(\rho)\leq \tau_k(\rho)$.
Therefore, $\wp\inv(\de_k)\subset \UU_k\cap  S$.

The fundamental domain $\De_k$ is defined as the subdomain of $S\cap \UU_k$ bounded by $\wp\inv(\de_k)$ and $\de_k$. Fix $(\vp_k(\us,\rho),\us, \rho)\in \de_k$. Then 
\[
\wp\inv(\vp_k(\us,\rho),\us, \rho)=(\vp_k(u_k(\rho),\rho),u_k(\rho),\rho)
\]
where $u_k(\rho)$ is defined by 
\[
\tau(\rho):=\displaystyle\int_{u_k(\rho)}^{\us} \frac{du}{\lam_k(\rho)\sqrt{u^2+\mu_k(\rho)}}.
\]
Therefore,
\[
\De_k =\left\{(\vp_k(u,\rho),u,\rho)\,|\, u\in [u_k(\rho),\us], \rho\in\,[0,a]\right\}.
\]

We denote by $R_k$\index{$R_k$} the connected component of $S\setm(\UU_k\cup \UU_{k+1})$ that contains $\wp(\De_k)$, that is, the connected component of $S\setm(\UU_k\cap \UU_{k+1})$ with nonempty intersection with $W_k^-$.

\begin{lem}
For $1\leq k\leq p$, there exists $m_k\in \N^*$ such that
\begin{itemize}
\item  $\wp^{m_k}(\De_k)\subset (S\cap \UU_{k+1})$,
\item$R_k\subset\bigcup\limits_{j=1}^{m_k}\wp^j(\De_k)$.
\end{itemize}
\end{lem}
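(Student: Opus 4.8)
The plan is to prove both inclusions fibrewise over the regular foliation and then tune the width $a$ of the partial neighbourhood so that a single integer works for all fibres. Since $f$ is a first integral, $\wp$, the chart domains $\UU_k$, the transition region $D_{k,k+1}$, the piece $R_k$ and the section $S$ are all saturated for $f$; writing $S_\rho:=S\cap f\inv(\rho)$ and $A_k(\rho):=\De_k\cap f\inv(\rho)$ for $\rho\in[0,a]$, it suffices to find $m_k\in\N$, \emph{independent of} $\rho$, with $\wp^{m_k}\big(A_k(\rho)\big)\subset S_\rho\cap\UU_{k+1}$ and $R_k\cap S_\rho\subset\bigcup_{j=1}^{m_k}\wp^j\big(A_k(\rho)\big)$. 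Because $f$ is constant along the flow, $\wp^j(z)=\phi_H\big(j\tau(\rho),z\big)$ for every $z\in S_\rho$. Moreover $A_k(\rho)$ is the piece $\{u_k\in[u_k(\rho),\us]\}$ of the circle $S_\rho$, bounded by $a_k(\rho):=S\cap\Sig_k^-\cap f\inv(\rho)$ (at $u_k=\us$) and by $\wp\inv(a_k(\rho))$ (at $u_k=u_k(\rho)$); hence, setting $\gamma_\rho(t):=\phi_H(t,a_k(\rho))$, the arc $\wp^j(A_k(\rho))$ joins $\gamma_\rho\big((j-1)\tau(\rho)\big)$ to $\gamma_\rho\big(j\tau(\rho)\big)$, and ---$\wp$ preserving $S_\rho$ and the orientation given by the flow--- the union $\bigcup_{j=1}^{m}\wp^j(A_k(\rho))$ is the sub-arc of $S_\rho$ running, in the flow direction, from $a_k(\rho)$ to $\gamma_\rho(m\tau(\rho))$, as long as that point has not yet completed a turn around $S_\rho$.

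Next I would draw from the normal form $(\star)$ three uniform estimates on the itinerary of such orbits. After shrinking $a$ we may assume $u_k(\rho)\ge c>0$ on $[0,a]$ for every $k$: indeed $\tau(\rho)\to T$ as $\rho\to0$ forces $u_k(\rho)\to\us\,e^{-\lam_k(0)T}>0$. Then along any orbit issued from $A_k(\rho)$, as long as it stays in $\UU_k$ one has $\dot u_k=\lam_k(\rho)\sqrt{u_k^2+\mu_k(\rho)}\ge\big(\min_{[0,a]}\lam_k\big)c$, so the orbit reaches $\Sig_k^-$ within a time $\ell_1$ independent of $\rho$; and, by the bounded-transition-time property established in Step~3 (the passage from $\Sig_k^-$ to $\Sig_{k+1}^+$ takes a time $\sig_{k,k+1}(\rho)$ independent of $\vp_k$, bounded on $[0,a]$, and finite at $\rho=0$, where the orbit simply runs along the cylinder $W_{k,k+1}$), it then enters the slab $\{\,|u_{k+1}|\le\us\,\}\subset\UU_{k+1}$ within an additional time $\ell_2$ independent of $\rho$. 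Finally, still by $(\star)$, the time any orbit needs to cross that slab is $\tfrac{2}{\lam_{k+1}(\rho)}\,\Argsh\!\big(\us/\sqrt{\mu_{k+1}(\rho)}\big)$, which equals $+\infty$ at $\rho=0$ and is bounded below on $[0,a]$ by $\vartheta(a):=\min_{\rho\in[0,a]}\tfrac{2}{\lam_{k+1}(\rho)}\Argsh\!\big(\us/\sqrt{\mu_{k+1}(\rho)}\big)$, with $\vartheta(a)\to+\infty$ as $a\to0$ since $\mu_{k+1}(\rho)=O(\rho)$.

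Now I would fix the constants. Put $\tau_{\min}:=\min_{[0,a]}\tau>0$, $\tau_{\max}:=\max_{[0,a]}\tau$ and $L:=\ell_1+\ell_2+\tau_{\max}$, shrink $a$ once more so that $\vartheta(a)>\big(\lceil L/\tau_{\min}\rceil+2\big)\tau_{\max}$ for every $k$ (finitely many conditions, realizable since the right-hand side stays bounded as $a\to0$), and set $m_k:=\lceil L/\tau_{\min}\rceil+2$. Fix $\rho\in[0,a]$. Every orbit issued from a point of $A_k(\rho)$ has entered $\{|u_{k+1}|\le\us\}$ by time $\ell_1+\ell_2\le L<m_k\tau(\rho)$ and, as $m_k\tau(\rho)\le m_k\tau_{\max}<\vartheta(a)$, has not yet left it at time $m_k\tau(\rho)$; hence $\wp^{m_k}(A_k(\rho))=\phi_H\big(m_k\tau(\rho),A_k(\rho)\big)\subset\{|u_{k+1}|\le\us\}\subset S_\rho\cap\UU_{k+1}$, which is the first inclusion. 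For the second, the same inequality $m_k\tau(\rho)<\vartheta(a)$ shows that at time $m_k\tau(\rho)$ the orbit $\gamma_\rho$ is still inside $\UU_{k+1}$ on its first passage there, so it has not completed a turn around $S_\rho$; consequently the arcs $\wp^1(A_k(\rho)),\dots,\wp^{m_k}(A_k(\rho))$ are disjoint apart from endpoints and their union is the sub-arc of $S_\rho$ from $a_k(\rho)$ to $\gamma_\rho(m_k\tau(\rho))$ in the flow direction. Travelling along it from $a_k(\rho)$ one meets in turn the rest of $S\cap\UU_k$, then $R_k\cap S_\rho$, then $S\cap\UU_{k+1}$; since its endpoint $\gamma_\rho(m_k\tau(\rho))$ lies in $\UU_{k+1}$, the arc contains $R_k\cap S_\rho$ entirely. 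The fibre $\rho=0$ is identical, with $\vartheta(0)=+\infty$ and $\gamma_0$ converging to the $\wp$-fixed point $S\cap\Cc_{k+1}$; reassembling the fibres yields the lemma.

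The crux is this balancing act. Near the hyperbolic orbits the flow is arbitrarily slow, so no $m_k$ can work uniformly in $\rho$ unless one first exploits the freedom to shrink $a$, which inflates the dwell time $\vartheta(a)$ inside $\UU_{k+1}$ and thereby creates a reservoir of returns large enough to absorb the \emph{uniformly bounded} number $\lceil L/\tau_{\min}\rceil$ of returns needed to reach $\UU_{k+1}$, while at the same time preventing $\wp^{m_k}(\De_k)$ from overshooting $\UU_{k+1}$ for the values of $\rho$ bounded away from $0$. The remaining points --- the fibrewise reduction itself and the treatment of the stratified fibre $S_0$ --- are routine.
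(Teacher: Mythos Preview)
Your proof is correct and follows essentially the same strategy as the paper: bound the transit time from $\De_k$ to $\UU_{k+1}$ uniformly in $\rho$, choose $m_k$ so that $m_k$ returns exceed this transit time, then shrink $a$ so that the dwell time in $\UU_{k+1}$ exceeds $m_k\tau_{\max}$. Your fibrewise presentation makes the role of the uniform constants very transparent.

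The one noteworthy difference is in the second inclusion. The paper argues \emph{backward}: it first checks (after a further shrinking of $a$) that $\wp^{-j}(\De_k)\subset\UU_k$ for $0\le j\le m_k$, and there the $u_k$--coordinate is strictly monotone along orbits, so these backward iterates are nested $u_k$--intervals whose union is visibly a single strip with boundaries $\wp^{-m_k}(\de_k)$ and $\de_k$; applying the homeomorphism $\wp^{m_k}$ then gives $\bigcup_{j=1}^{m_k}\wp^j(\De_k)$ as a single connected piece of $S$ with one boundary in $\UU_k$ and the other in $\UU_{k+1}$, hence containing $R_k$. You instead argue \emph{forward}, using the ``first passage in $\UU_{k+1}$'' to conclude that the iterates $\wp^j(a_k(\rho))$ have not wrapped around $S_\rho$. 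This is correct, but it tacitly uses that the domains $\UU_j\cap S_\rho,\,R_j\cap S_\rho$ appear on $S_\rho$ in the cyclic order matching the direction of the rotation $\wp$ --- which holds because $S$ was built from the $\phi_A$--orbit of a transverse arc, so $S_\rho$ is a $\theta_a$--circle and $\wp$ rotates it by $\tau(\rho)\omega_a(\rho)$ in the direction in which the flow threads the $\UU_j$'s. The paper's backward argument sidesteps this point by staying inside a single chart; your forward argument is geometrically more direct but leans on the global structure of the section.
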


\begin{proof}
For $a>0$ we set $T_a:=\{\tau(\rho)\,|\,\rho \in [0,a]\}$. There exists $0<t_0\leq t_1$ such that $T_a:=[t_0(a),t_1(a)]$. Notice that, if $a'\leq a$, then $t_0(a)\leq t_0(a')\leq t_1(a')\leq t_1(a)$.
By compactness of $S_k$ and continuity of $\sig_{k,k+1}$, one can define $\sig_k:=\max_{z\in \Sig_k^+}\sig_{k,k+1}(z)>0$. 
There exists $m_k\in \N^*$ such that $(m_k-1)t_0(a)\geq \sig_k$. Since $\lim_{\rho\rit 0}\tau_{k+1}(\rho)=+\infty$, one can assume that $a$ is small enough so that $m_kt_1(a)<\tau_{k+1}(\rho)-\sig_{k,k+1}(\rho)$ for any $\rho\in [0,a]$. That is, 
\[
\Phi_H([(m_k-1)t_0(a),m_kt_1(a)], \Sig_k^-)\subset \UU_k.
\]
In particular,  for all $z\in \de_k$, $\wp^{m_k}(z)\subset \Phi(m_k[t_0(a),t_1(a)],\Sig_k^-)\subset \UU_k$, that is, $\wp^{m_k}(\de_k)\subset \UU_k\cap S$.
In the same way, for all $z\in \wp\inv(\de_k)$, $\wp^{m_k}(z)\subset \Phi((m_k-1)[t_0(a),t_1(a)],\Sig_k^-)\subset \UU_k$, that is, $\wp^{m_k-1}(D_k)\subset \UU_k\cap S$.

We set $\stackrel{\circ}{\De_k}:=\De_k\setm(\de_k\cup\wp\inv(\de_k))$. Since $\wp^{m_k}$ is a diffeomorphism, $\wp^{m_k}(\stackrel{\circ}{\De_k})$ is one of  the two connected components of $S\setm (\wp^(\de_k)\cup\wp^{m_k-1}(\de_k))$. Since the second one has nonempty intersection with all the hyperbolic orbits $\Ga_j$,  $\wp^{m_k}(\stackrel{\circ}{\De_k})$ must be the first one which is contained in $\UU_{k+1}$ and the first point is proved.  

To prove the second point, we first remark that $R_k$ is contained in the connected component of $S\setm(\wp(D_k)\cup \wp^{m_k-1}(D)$.
With the same argument as in the beginning of the proof, we can assume that $a$ is small enough so that $\phi_H([-m_k, 0], \De_k)\subset \UU_k$. 
So for $2\leq j\leq m_k$,
\[
\wp^{-j}(\de_k)=\{(\vp_k(w_j(\rho), \rho),w_j(\rho),\rho)\}
\]
where $u_j(\rho)$ is defined by  $\displaystyle\int_{u_j(\rho)}^{\ws} \frac{du}{u^2+\mu(\rho)}=j\tau(\rho)$. Therefore,
$\bigcup\limits_{j=1}^{m_k}\wp^{-j}(\De_k)$ is a connected $2$-dimensional submanifold of $S$ with boundaries $\wp^{-m_k}(\de_k)$ and $\wp\inv(\de_k)$. 
As a consequence, $\bigcup\limits_{j=1}^{m_k}\wp^j(\De_k)=\wp^{m_k}\left(\bigcup\limits_{j=1}^{m_k}\wp^{-j}(\De_k)\right)$ 
is a connected $2$-dimensional submanifold of $S$ with boundaries $\wp(\de_k)$ and $\wp^{m_k-1}(\de_k)$. Obviously, it is the one which contains $R_k$.
\end{proof}

Consider the compact annulus $\hjA:=\T\times[0,1]$ with coordinates $(\th,r)$. For $1\leq k\leq p$, we set $z_k:=(\kp,0)$,  $\OO_k:=[\kp-\us,\kp+\us]\times[0,1]$ and $\RR_k:=[\kp+\us,\frac{k+1}{p}-\us]\times[0,1]$.\index{$\RR_k$}
Finally we set 
\[\al:[0,1]\rit \R_+^* : r\ma \tau(ar).
\]

\paraga{\it Construction of $X_k$ and $\chi_k$.}
Let $X_k$\index{$X_k$} be the vector field defined on $\OO_k$ by 
\[
X_k(\th,r):=\lam_k(ar)\sqrt{\left(\th-\kp\right)^2+\mu_k(ar)}\Dp{}{\th}.
\]
We denote by $(\psi_{(k)}^t)$ its local flow in $\OO_k$ and by $\psi_{(k)}^\al$ its associated time-$\al$ map, that is,
$
\psi_{(k)}^\al(\th,r)=\psi_{(k)}(\al(r),(\th,r)).
$

For $1\leq k\leq p$, we set
\[
\begin{array}{llll}
\chi_k: &\UU_k\cap \Sig & \rit & \OO_k \\
& (\vp_k(u,\rho),u,\rho) & \ma & (u+\kp, \frac{\rho}{a}).
\end{array}
\]

The proof of the following lemma is immediate.

\begin{lem}\label{chi_k}
Let $k\in \{1,\cdots,p\}$. 
\begin{enumerate}
\item $\psi_{(k)}^\al(\chi_k(\wp\inv(\de_k)))=\chi_k(\de_k)$,
\item $\psi_{(k+1)}^\al(\chi_{k+1}(\wp^{m_k-1}(\de_k)))=\chi_{k+1}(\wp^{m_k}(\de_k))$,
\item for all $z\in \UU_k\setm\De_k$, $\psi_{(k)}^\al(z)=\chi_k(P(z))$.
\end{enumerate}
\end{lem}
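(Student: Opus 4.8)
The plan is to reduce the three identities to a single elementary conjugation statement, read off directly from the normal form $(\star)$ of the restricted vector field $X^H$ in the coordinates $(\vp_k,u,\rho)$ on $\UU_k$. Recall that $\chi_k$ sends a point $(\vp_k(u,\rho),u,\rho)$ of $S\cap\UU_k$ to $(u+\kp,\rho/a)\in\OO_k$: up to this affine reparametrization of the $(u,\rho)$--variables and the suppression of $\vp_k$, the vector field $X_k$ on $\OO_k$ is exactly the $u$--component of $(\star)$, because $\lam_k(a\cdot\frac{\rho}{a})=\lam_k(\rho)$ and $\mu_k(a\cdot\frac{\rho}{a})=\mu_k(\rho)$. (Throughout, $\rho$ denotes the value of the invariant coordinate along the orbit; it is an equivalent parameter to $f(z)$, and $\al(\rho/a)=\tau(a\cdot\tfrac{\rho}{a})=\tau(\rho)$.)

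The only thing to prove is then the following. For any $z=(\vp_k(u,\rho),u,\rho)$ and any $t\ge0$ with $\Phi_H([0,t],z)\subset\UU_k$, one has $\chi_k(\Phi_H(t,z))=\psi_{(k)}(t,\chi_k(z))$. Indeed, write $\Phi_H(t,z)=(\vp_k(t),u(t),\rho)$, the last coordinate being constant since $\dot\rho=0$ in $(\star)$; then $u(t)$ solves $\dot u=\lam_k(\rho)\sqrt{u^2+\mu_k(\rho)}$ with $u(0)=u$. Writing likewise $\psi_{(k)}(t,(u+\kp,\rho/a))=(\th(t),\rho/a)$, the $r$--coordinate is constant and $v(t):=\th(t)-\kp$ solves the very same equation with the same initial condition. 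By uniqueness of solutions of this scalar, autonomous, locally Lipschitz ODE, $v\equiv u$ on $[0,t]$, so $\chi_k(\Phi_H(t,z))=(u(t)+\kp,\rho/a)=\psi_{(k)}(t,\chi_k(z))$. Taking $t=\tau(\rho)$ and using $\al(\rho/a)=\tau(\rho)$, this yields $\chi_k(\wp(z))=\psi_{(k)}^\al(\chi_k(z))$ whenever the arc $\Phi_H([0,\tau(\rho)],z)$ stays in $\UU_k$.

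Granting this, the three assertions follow by checking in each case that the relevant orbit arc stays in $\UU_k$ (resp.\ $\UU_{k+1}$). For $(3)$: $\De_k$ is by definition the part of $S\cap\UU_k$ bounded by $\wp\inv(\de_k)$ and $\de_k$, and $\dot u>0$ throughout $\UU_k\cap\ov{\DD_a}$; hence for $z\in(S\cap\UU_k)\setminus\De_k$ the arc $\Phi_H([0,\tau(\rho)],z)$ does not leave $\UU_k$ before returning to $S$, and the conjugation statement at $t=\tau(\rho)$ gives the displayed identity (identifying $z$ with $\chi_k(z)$ and $P$ with $\wp$). For $(1)$: $\wp\inv(\de_k)$ lies in $S\cap\UU_k$ and $\wp$ carries it onto $\de_k$ by flowing for time $\tau(\rho)$ along an arc that stays in $\UU_k$, since $u$ runs monotonically from $u_k(\rho)$ up to $\us$; thus $\psi_{(k)}^\al(\chi_k(\wp\inv(\de_k)))=\chi_k(\de_k)$. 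For $(2)$: the preceding lemma (on the integers $m_k$) locates both $\wp^{m_k-1}(\de_k)$ and $\wp^{m_k}(\de_k)$ in $S\cap\UU_{k+1}$ with the joining $\phi_H$--orbit arc contained in $\UU_{k+1}$, so the conjugation statement — now with $k+1$ in place of $k$ — gives $\psi_{(k+1)}^\al(\chi_{k+1}(\wp^{m_k-1}(\de_k)))=\chi_{k+1}(\wp^{m_k}(\de_k))$.

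The only points deserving attention — and the reason the lemma is no more than ``immediate'' — are that the normal form $(\star)$ and the sign $\dot u>0$ are available only thanks to the normal-coordinate construction near the hyperbolic orbits, and that one must keep track of which chart $\UU_k$ each orbit arc visits; the analytic content is merely the uniqueness theorem for the scalar equation $\dot u=\lam_k(\rho)\sqrt{u^2+\mu_k(\rho)}$.
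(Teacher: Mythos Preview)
Your proof is correct and is precisely the argument the paper has in mind when it declares the lemma ``immediate'': the vector field $X_k$ was deliberately chosen so that, under the affine change $(\vp_k(u,\rho),u,\rho)\mapsto(u+\tfrac{k}{p},\rho/a)$, its flow matches the $u$--component of the normal form $(\star)$, and the three assertions are then just the conjugation identity $\chi_k\circ\wp=\psi_{(k)}^\al\circ\chi_k$ applied on the subdomains where the relevant $\phi_H$--orbit arc remains in the chart. You also correctly read through the typos in item (3) (where $z$ should be $\chi_k(z)$ on the left and $P$ should be $\wp$), which is exactly the intended statement.
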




\paragraph{Construction of $\ha X_k$ and of the flow $\psi$.}\index{$\ha X_k$} For any $k\in \{1,\cdots,p\}$, we consider a function $\xi_k:\RR_k\rit \R$ such that
\begin{equation}\tag{**}
\int_{\kp+\us}^{\kkp-\us}\frac{d\th}{\xi_k(\th,r)}=\sig_{k,k+1}(ar)
\end{equation}
Let $\ha{X}_k$ be the vector field on $\RR_k$ defined by 
\[
\ha{X}_k(\th,r)=\xi_k(\th,r)\Dp{}{\th},
\]
and denote by $(\ha{\psi}_{(k)})$ its local flow. 
By construction, for any $r\in [0,1]$
\[
\ha{\psi}_{(k)}\left(\sig_{k,k+1}(ar), \left(\kp+\us, r\right)\right)= \left(\kkp-\us, r\right),
\]
We define a flow $\psi$ on $\T\times[0,1]$ by gluing together the flows $\psi_{(k)}$ and $\ha{\psi}_{(k)}$. We begin by constructing, for $1\leq k\leq p$, a local flow $\psi_k$ on $\OO_k\bigcup\RR_k$ in the following way.
For $(\th,r)\in \OO_k\setm\{z_k\}$ there exists a unique $t(\th,r)>0$ such that $\psi_{(k)}(t(\th,r))\in \chi_k(\de_k)$. 
We set 
\begin{itemize}
\item $\psi_k^t(\th,r)=\psi_{(k)}(t,(\th,r))$ if $t\leq t(\th,r)$,
\item $\psi_k^t(\th,r)=\ha{\psi}_{(k)}(t-t(\th,r),(\kp+ \us,r))$ if $ t(\th,r)\leq t \leq t(\th,r)+ \sig_{k,k+1}(ar)$.
\end{itemize}

\begin{lem}
$\psi_k$ is a continuous flow on $\OO_k\bigcup\RR_k$.
\end{lem}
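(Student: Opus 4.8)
The statement asserts that the concatenation $\psi_k$ of the local flow $\psi_{(k)}$ of $X_k$ on $\OO_k$ with the local flow $\ha\psi_{(k)}$ of $\ha X_k$ on $\RR_k$ --- glued along the common section $\chi_k(\de_k)=\{\th=\kp+\us\}\times[0,1]$ --- is a continuous local flow on $\OO_k\cup\RR_k$. The plan is to check three things in turn: the flow (cocycle) identity, continuity away from the rest point $z_k=(\kp,0)$, and continuity at $z_k$, only the last being delicate. First I record the elementary facts. On $\OO_k$ the field $X_k$ vanishes exactly at $z_k$, since $\lam_k>0$ on $[0,1]$ and $(\th-\kp)^2+\mu_k(ar)=0$ forces $\th=\kp$ and $\mu_k(ar)=0$, i.e.\ $r=0$; thus $\psi_k^t(z_k)=z_k$. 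On $\OO_k\setm\{z_k\}$ the section $\chi_k(\de_k)$ is transverse to $X_k$, with transversal speed $\lam_k(ar)\sqrt{(\us)^2+\mu_k(ar)}\ge \us\min_{r\in[0,1]}\lam_k(ar)>0$, so by the implicit function theorem the first hitting time $t(\cdot)$ is well defined and as regular as $\psi_{(k)}$, in particular continuous, there. Extending $\psi_k$ to $\RR_k$ by $\psi_k^t=\ha\psi_{(k)}^t$, the identity $\psi_k^{s+t}=\psi_k^s\circ\psi_k^t$ follows from a routine three-case discussion ($s+t\le t(\th,r)$; $t\le t(\th,r)\le s+t$; $t(\th,r)\le t$), the only non-formal ingredient being the additivity of first hitting times, $t(\psi_{(k)}^s(\th,r))=t(\th,r)-s$ for $0\le s<t(\th,r)$, together with the facts that $\psi_{(k)}$ and $\ha\psi_{(k)}$ are local flows and that the two clauses defining $\psi_k$ agree on $\{t=t(\th,r)\}$, where both give $(\kp+\us,r)$.

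Continuity of $\psi_k$ on $(\OO_k\cup\RR_k)\setm\{z_k\}$ is then a standard pasting argument. In $(t,\th,r)$-space the two closed regions $\{t\le t(\th,r)\}$ and $\{t\ge t(\th,r)\}$ cover the (local) domain of $\psi_k$; on the first $\psi_k=\psi_{(k)}$, on the second $\psi_k$ is the composition of the continuous map $(\th,r)\mapsto(\kp+\us,r)$ with $\ha\psi_{(k)}$, precomposed with the continuous map $(t,\th,r)\mapsto t-t(\th,r)$; and the two expressions agree on the overlap $\{t=t(\th,r)\}$. Hence $\psi_k$ is continuous there.

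The one genuine difficulty is continuity at $z_k$, where $t(\cdot)$ is discontinuous --- it blows up. The key estimate is that $t(\th,r)\to+\infty$ as $(\th,r)\to z_k$: writing $L_k:=\max_{r\in[0,1]}\lam_k(ar)$, one has
\[
t(\th,r)=\int_{\th-\kp}^{\us}\frac{ds}{\lam_k(ar)\sqrt{s^2+\mu_k(ar)}}\ \ge\ \frac{1}{L_k}\,\log\frac{\us}{2\max(\th-\kp,0)+\sqrt{\mu_k(ar)}},
\]
using $\mu_k(0)=0$ and the elementary identity $\Argsh\tfrac{\us}{\sqrt c}-\Argsh\tfrac{u}{\sqrt c}=\log\tfrac{\us+\sqrt{(\us)^2+c}}{u+\sqrt{u^2+c}}\ge\log\tfrac{\us}{2u+\sqrt c}$ ($u\ge0$, $c\ge0$), and the right-hand side tends to $+\infty$ as $\th\to\kp$ and $r\to0$. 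Consequently, given any $t_0$, there is a neighbourhood of $z_k$ on which $t(\th,r)>t_0$, so that $\psi_k^{t_0}(\th,r)=\psi_{(k)}^{t_0}(\th,r)$; and $\psi_{(k)}^{t_0}(\th,r)\to z_k$ because, $r$ being constant along orbits, the scalar equation $\dot u=\lam_k(ar)\sqrt{u^2+\mu_k(ar)}$ is Lipschitz in $u$ with the $r$-uniform constant $L_k$, whence Gronwall's lemma yields $\abs{\psi_{(k)}^{t_0}(\th,r)-z_k}\le\big(\abs{\th-\kp}+\sqrt{\mu_k(ar)}\big)e^{L_kt_0}+r\to0$. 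This gives joint continuity of $\psi_k$ at every $(t_0,z_k)$, and combining it with the two previous paragraphs shows that $\psi_k$ is a continuous local flow on $\OO_k\cup\RR_k$. The main obstacle is precisely this last step: one must notice that the failure of continuity of the hitting time at $z_k$ is harmless, because an orbit issued near $z_k$ lingers near $z_k$ for an unboundedly long time and hence, at any fixed time, has not yet crossed the seam into $\RR_k$.
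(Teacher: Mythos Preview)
Your proof is correct, and your treatment of the cocycle identity follows the same three-case discussion as the paper (splitting according to the position of $t(\th,r)$ relative to $s$ and $t$). Where you diverge from the paper is in the handling of continuity: the paper simply declares that ``the continuity of $\psi_k$ is obvious by construction'' and devotes the entire proof to the flow property, whereas you actually verify continuity, including at the rest point $z_k$. Your observation that the blow-up of the hitting time $t(\cdot)$ at $z_k$ is harmless --- because near $z_k$ one is, for any fixed time $t_0$, still governed by $\psi_{(k)}$, and a Gronwall-type estimate shows $\psi_{(k)}^{t_0}(\th,r)\to z_k$ --- fills a gap the paper leaves implicit. This also covers, in passing, the points $(\th,0)$ with $\th<\kp$ on the stable side, for which $t(\th,r)$ is in fact infinite (the paper's phrase ``there exists a unique $t(\th,r)>0$'' is slightly inaccurate there); your argument handles them correctly since such orbits remain in $\OO_k$ for all forward time. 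In short: same route for the algebraic part, and you supply the analytic detail the paper omits.
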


\begin{proof}
The continuity of $\psi_k$ is obvious by construction.
One just has to check that $\psi_k$ is a flow, that is, $\psi_k(s+t,(\th,r))=\psi_k^s\circ \psi_k^t(\th,r)=\psi_k^t\circ \psi_k^s(\th,r)$. The only possible difficulty occurs when 
$t(\th,r)<t+s<t(\th,r)+ \sig_{k,k+1}(ar)$.

Assume that $s\leq t$ and that $t(\th,r)<t+s<t(\th,r)+ \sig_{k,k+1}(ar)$. We set $\ths:=\kp+\us$. 
We first remark that $\psi_k(s+t,(\th,r))=\ha{\psi}_k(t+s-t(\th,r),(\ths,r))$.
They are three possibilities.
\vspace{0.15cm}

\noindent $\bullet$ $t<s<t(\th,r)$. Then $t(\psi_{(k)}(s,(\th,r)))= t(\th,r)-s$ and  $t>t(\psi_{(k)}(s,(\th,r)))$. Hence
\begin{align*}
\psi_k^t\circ\psi_k^s(\th,r)=\psi_k(t, \psi_k^s(\th,r)) & =\ha{\psi}_{(k)}(t-t(\psi_{(k)}(s,(\th,r))),(\ths,r)) \\
 & =\ha{\psi}_{(k)}(t+s-t(\th,r),(\ths,r)) \\
 & = \psi_k(s+t,(\th,r)).
\end{align*}
In the same way, $\psi_k^s\circ\psi_k^t(\th,r)=\psi_k(s+t,(\th,r))$.
\vspace{0.15cm}

\noindent $\bullet$ $t\leq t(\th,r)\leq s$. Then $\psi_k(s,(\th,r))=\ha{\psi}_{(k)}(s-t(\th,r), (\th,r))$, which yields
\begin{multline}
\psi_k^t\circ\psi_k^s(\th,r)=\ha{\psi}_{(k)}(t, \ha{\psi}_{(k)}(s-t(\th,r), (\ths,r)))\\
=\ha{\psi}_{(k)}(t+s-t(\th,r),(\ths,r))=\psi_k(s+t,(\th,r)).
\end{multline}
On the other hand, 
\begin{multline}
\psi_k^s\circ\psi_k^t(\th,r)=\ha{\psi}_{(k)}(s-t(\psi(s,(\th,r))), (\ths,r))\\
=\ha{\psi}_{(k)}(t+s-t(\th,r),(\ths,r))=\psi_k(s+t,(\th,r)).
\end{multline}
\vspace{0.15cm}

\noindent $\bullet$ $ t(\th,r)\leq t\leq s$. Then, as before $\psi_k^t\circ\psi_k^s(\th,r)=\psi_k(s+t,(\th,r))$. Conversely, 
\[
\psi_k^s\circ\psi_k^t(\th,r)=\ha{\psi}_{(k)}(s, \ha{\psi}_{(k)}(t-t(\th,r), (\ths,r)))=\psi_k(s+t,(\th,r)). 
\]
This concludes the proof.
\end{proof}

\paraga {\it Construction of $\psi$ and conjugacy between $\psi^\al$ and $\phi_H$.} Now, we construct a global flow  $\psi$ on $\jA$ by gluing together the previous flows defined on $\OO_k\bigcup\RR_k$ with the usual convention $p+1=1$. One checks as in the previous lemma that this defines a flow on $\T\times[0,1]$.
We denote by $\psi^\al$ its time-$\al$ map.

\begin{lem}\label{wp^m_k_et_chi_k}
For all $z\in \De_k$
\[
(\psi^\al)^{m_k}(\chi_k(z))=\chi_{k+1}(\wp^{m_k}(z)).
\]
\end{lem}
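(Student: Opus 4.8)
The plan is to trade the $m_k$-fold iteration of the discrete maps for a single comparison of the flows $\psi$ and $\phi_H$ over one common time interval, and then to follow the orbit through the three successive regimes of the glued flow $\psi$: the $\psi_{(k)}$-regime in $\OO_k$, the $\ha\psi_{(k)}$-regime in $\RR_k$, and the $\psi_{(k+1)}$-regime in $\OO_{k+1}$, checking at each stage that $\chi_k$ (resp.\ $\chi_{k+1}$) intertwines it with the corresponding arc of the $\phi_H$-orbit; in essence this re-reads Lemma \ref{chi_k}(1)--(3) continuously along the orbit. For the reduction: $\psi$ preserves each circle $\T\times\{r\}$, so along the orbit of a point of second coordinate $r$ one has $(\psi^\al)^{m_k}=\psi\big(m_k\al(r),\cdot\big)$; and $f$ is a first integral of $\phi_H$, so $\wp^{m_k}(z)=\phi_H\big(m_k\tau(\rho),z\big)$, where $\rho$ is the common last coordinate along the orbit and $\tau$ the torus-constant transition function of $S$. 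Since $\al(r)=\tau(ar)$ with $r=\rho/a$, both iterates flow for the same time $T:=m_k\tau(\rho)$, and it suffices to prove $\psi\big(T,\chi_k(z)\big)=\chi_{k+1}\big(\phi_H(T,z)\big)$ for $z\in\De_k$.

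To carry this out, write $z=(\vp_k(u,\rho),u,\rho)$. By the normal form $(\star)$ the $(u,\rho)$-component of the $\phi_H$-orbit obeys the autonomous system $\dot u=\lam_k(\rho)\sqrt{u^2+\mu_k(\rho)}$, $\dot\rho=0$, which is exactly the system $\chi_k$ (forgetting $\vp_k$) carries to the vector field $X_k$ generating $\psi_{(k)}$ on $\OO_k$; so, with $t_1(\rho)\ge0$ the time at which $u$ reaches $\us$, one gets $\psi\big(t_1(\rho),\chi_k(z)\big)=\chi_k\big(\phi_H(t_1(\rho),z)\big)=(\kp+\us,\rho/a)$, i.e.\ the $\psi$-orbit has just reached the exit face $\chi_k(\de_k)$ of $\OO_k$. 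On $[t_1(\rho),t_1(\rho)+\sig_{k,k+1}(\rho)]$ the $\phi_H$-orbit makes the direct transit through $D_{k,k+1}$ from $\{u=\us\}$ to $\{u_{k+1}=-\us\}$ in time $\sig_{k,k+1}(\rho)$, independent of $\vp_k$ (Step 3 of the continuation of $S$), while the $\psi$-orbit crosses $\RR_k$ under $\ha\psi_{(k)}$; since $\xi_k$ is defined by $\int_{\kp+\us}^{\kkp-\us}\frac{d\th}{\xi_k(\th,r)}=\sig_{k,k+1}(ar)$, the $\psi$-orbit reaches $\th=\kkp-\us$ exactly at time $t_1(\rho)+\sig_{k,k+1}(\rho)$, and $\chi_{k+1}$ sends $\{u_{k+1}=-\us\}$ to $\{\th=\kkp-\us\}$, so $\psi\big(t_1(\rho)+\sig_{k,k+1}(\rho),\chi_k(z)\big)=\chi_{k+1}\big(\phi_H(t_1(\rho)+\sig_{k,k+1}(\rho),z)\big)$. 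On the remaining interval the $\phi_H$-orbit runs inside $\UU_{k+1}$ with $(u_{k+1},\rho)$-component again of the form $(\star)$ for index $k+1$, which $\chi_{k+1}$ intertwines with $\psi_{(k+1)}$, so $\psi\big(t,\chi_k(z)\big)=\chi_{k+1}\big(\phi_H(t,z)\big)$ as long as the $\psi$-orbit stays in $\OO_{k+1}$; since $\wp^{m_k}(z)\in S\cap\UU_{k+1}$ by the preceding lemma, $\chi_{k+1}\big(\phi_H(T,z)\big)\in\OO_{k+1}$, hence by continuity the $\psi$-orbit does not leave $\OO_{k+1}$ before time $T$, and $t=T$ gives the claim.

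The point that requires care, and the reason for the reduction, is that the regime switches of $\psi$ occur at the $\rho$-dependent times $t_1(\rho)$ and $t_1(\rho)+\sig_{k,k+1}(\rho)$, which are in general not integer multiples of the step $\al(\rho/a)$: handled naively, a single application of $\psi^\al$ would straddle two regimes. Passing to the flow $\psi$ — legitimate because $\psi$ is a genuine continuous flow by the lemmas just preceding — removes this entirely. What remains is bookkeeping: that $t_1(\rho)$, $\sig_{k,k+1}(\rho)$, $\tau(\rho)$ vary continuously on $[0,a]$ (with $\tau(0)=T$ the common period of the $\Cc_k$ and $\sig_{k,k+1}$ bounded), that the integer $m_k$ from the preceding lemma is compatible with this three-phase picture for every $\rho\in[0,a]$ — both already secured in the construction of $m_k$ — and that the forgetting of the angles $\vp_k,\vp_{k+1}$ by $\chi_k,\chi_{k+1}$ is harmless, which is exactly the angle-decoupling recorded in $(\star)$ and in Step 3.
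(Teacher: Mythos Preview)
Your argument is correct and is essentially the same as the paper's own proof: both replace the $m_k$-fold discrete iteration by the continuous flows over the common time $m_k\tau(\rho)=t_1+\sig_{k,k+1}(\rho)+t_2$ and then follow the orbit through the three regimes $\OO_k$, $\RR_k$, $\OO_{k+1}$, using that $\chi_k$ and $\chi_{k+1}$ are built precisely so that the normal form $(\star)$ matches $X_k$, $X_{k+1}$ and that $\xi_k$ is normalized by $(**)$ to give transit time $\sig_{k,k+1}(\rho)$. Your write-up is in fact more explicit than the paper's (which compresses the whole computation into a single displayed chain and even writes $\psi(\tau(\rho),\cdot)$ where $\psi(m_k\tau(\rho),\cdot)$ is meant), but the underlying idea is identical.
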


\begin{proof}
Fix $z:=(\vp_k(u,\rho),u,\rho)\in \De_k$. Let $u'\in [u_{k+1}(\rho), \us]$ be such that $\wp^{m_k}(z)=(\vp_{k+1}(u',\rho),u',\rho)$.
We set 
\[
t_1:=\displaystyle\int_{u}^{\us}\frac{ds}{\lam_k(\rho)\sqrt{s^2+\mu_k(\rho)}},\quad t_2:=\displaystyle\int_{-\us}^{u'}\frac{ds}{\lam_k(\rho)\sqrt{s^2+\mu_k(\rho)}}
\]
Then 
\begin{equation*}
\begin{aligned}
\chi_{k+1}(\wp^{m_k}(z)) &=\left(\kkp+u',\frac{\rho}{a}\right) \\
 &= \psi\left(\sig_{k,k+1}(\rho)+t_1(z)+t_2(z),\kp +u\right)\\
 &=\psi(\tau(\rho), \chi_k(z)).
\end{aligned}
\end{equation*}
This concludes the proof.
\end{proof}

We define a map $\chi: S\rit \T\times [0,1]$ by setting
\vspace{0.2cm}

\noindent $\bullet$ for $1\leq k\leq p$ and $z\in \OO_k$,\: $\chi(z)=\chi_k(z)$,
\vspace{0.2cm}

\noindent $\bullet$ for $1\leq k\leq p$ and $ z\in \wp^j(\De_k)$ with $1\leq j\leq m_k$,\: $\chi(z)=(\psi^\al)^j(\chi_k(\wp^{-j}(z)))$.
\vspace{0.2cm}

\begin{lem}
The map $\chi$ is well defined and is a homeomorphism. 
\end{lem}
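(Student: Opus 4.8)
The plan is to establish successively that $\chi$ is well defined, that it is continuous, and that it is bijective, and then to conclude by a compactness argument; I also intend to record at the end the conjugacy relation $\chi\circ\wp=\psi^\al\circ\chi$, which is what Lemma II ultimately asserts.

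First I would check well-definedness. The finitely many sets used in the definition of $\chi$ — the pieces $S\cap\UU_k=\chi_k\inv(\OO_k)$ and the iterates $\wp^j(\De_k)$, $1\leq j\leq m_k$ — cover $S$, since $\De_k\subset S\cap\UU_k$ and $R_k\subset\bigcup_{j=1}^{m_k}\wp^j(\De_k)$ by the preceding lemma, while $S\setm\bigcup_k\UU_k=\bigcup_k R_k$. Any two of these sets overlap only along one of the ``vertical'' curves $\wp^j(\de_k)$ separating consecutive $\wp^j(\De_k)$ (with the convention $\wp^0(\De_k)=\De_k$), or along $\wp^{m_k}(\De_k)$, which lies in $S\cap\UU_{k+1}$ and on which both the formula for $j=m_k$ and the formula $\chi_{k+1}$ are available. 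On a curve $\wp^j(\de_k)$ the two candidate values coincide because, by Lemma \ref{chi_k}(1),(3) and the way $\psi$ was glued, the map $\psi^\al$ reads $\wp$ through $\chi_k$ outside $\De_k$ and sends $\chi_k(\wp\inv(\de_k))$ onto $\chi_k(\de_k)$; on $\wp^{m_k}(\De_k)$ the required identity $(\psi^\al)^{m_k}(\chi_k(\wp^{-m_k}(z)))=\chi_{k+1}(z)$ is exactly Lemma \ref{wp^m_k_et_chi_k}, applied to $w=\wp^{-m_k}(z)\in\De_k$. Continuity then follows from the gluing lemma: the pieces above are closed in $S$, finitely many, and on each $\chi$ is a composition of continuous maps ($\chi_k$, iterates of the homeomorphism $\wp$, iterates of the continuous map $\psi^\al$).

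The main work is bijectivity, and I expect this to be the real obstacle. The idea is to exploit that $\De_k$ is a fundamental domain for $\wp$ (bounded by $\wp\inv(\de_k)$ and $\de_k$) and that, by Lemma \ref{chi_k}(1), $\chi_k(\De_k)$ is a fundamental domain for $\psi^\al$ near $z_k$. Then the iterates $\wp^0(\De_k),\dots,\wp^{m_k}(\De_k)$ have pairwise disjoint interiors and tile the closed subannulus of $S$ equal to $\De_k\cup R_k\cup\wp^{m_k}(\De_k)$, while the iterates $(\psi^\al)^0(\chi_k(\De_k)),\dots,(\psi^\al)^{m_k}(\chi_k(\De_k))$ tile $\chi_k(\De_k)\cup\RR_k\cup\chi_{k+1}(\wp^{m_k}(\De_k))$; the formulas defining $\chi$ identify these two tilings piece by piece, matching along the common boundary curves, whence $\chi$ is injective and onto. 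The delicate point, which is where the bookkeeping must be done carefully, is to guarantee that the $\psi^\al$-iterates sweep exactly the transition band $\RR_k$ and nothing more: this uses the choice of $m_k$ (namely $(m_k-1)t_0(a)\geq\sig_k$ together with Remark \ref{sig_k,k+1borneInf}) and the construction of $\ha X_k$, for which crossing $\RR_k$ costs precisely the time $\sig_{k,k+1}(ar)$. The boundary fibre $\{\rho=0\}$, on which $\psi$ has the fixed points $z_k$, is handled by the same argument, with $z_k$ corresponding to $\de_k\cap\{\rho=0\}$.

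Finally, $S$ is compact and $\T\times[0,1]$ is Hausdorff, so the continuous bijection $\chi$ is automatically a homeomorphism. Moreover, combining Lemma \ref{chi_k}(3) with Lemma \ref{wp^m_k_et_chi_k}, one checks piece by piece that $\chi\circ\wp=\psi^\al\circ\chi$, which completes the proof of Lemma II.
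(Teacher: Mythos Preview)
Your proof is correct and follows the same approach as the paper: check well-definedness on the overlaps via Lemmas \ref{chi_k} and \ref{wp^m_k_et_chi_k}, continuity by gluing, and conclude the homeomorphism by compactness. Your treatment is in fact more detailed than the paper's, which handles the overlap at $\wp^{m_k}(\De_k)$, checks continuity across the curves $\wp^j(\de_k)$, and then dismisses the homeomorphism claim with ``by construction''; your tiling argument for bijectivity makes that step explicit. Note only that the conjugacy relation $\chi\circ\wp=\psi^\al\circ\chi$ you append at the end is stated and proved as a separate lemma in the paper, not as part of this one.
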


\begin{proof}
To see that $\chi$ is well defined, one has to check that, for $1\leq k\leq p$,  both definitions of $\chi$ coincide when $z\in \wp^{m_k}(z)$.
Fix $z\in \wp^{m_k}(z)$ and let $z_0=\wp^{-m_k}(z)\in \De_k$. Then
\[
(\psi^{\al})^{m_k}(\chi_k(\wp^{-m_k}(\wp^{m_k}(z_0)))=(\psi^{\al})^{m_k}(\chi_k(z_0))=\chi_{k+1}(\wp^{m_k}(z_0))=\chi_{k+1}(z),
\]
the third equality coming from lemma \ref{wp^m_k_et_chi_k}.

\noindent Obviously, $\chi$ is continuous on $S\setm\left(\bigcup_{1=k}^p\bigcup_{1=j}^{m_k}\wp^j(\de_k)\right)$. Fix $k$ and $j\in \{1,\cdots, m_k\}$.
One just has to check that for $z_0\in \wp^j(\de_k)$, 
\[
\lim_{\substack{z\rit z_0\\z\in \wp^j(\De_k)}}\chi(z)=\lim_{\substack{z\rit z_0\\z\in \wp^{j+1}(\De_k)}}\chi(z).
\]
Let $z_1=\wp^{-j}(z_0)$. Then
\[
\lim_{\substack{z\rit z_0\\z\in \wp^j(\De_k)}}\chi(z)=(\psi^\al)^j(\chi_k(\wp^{-j}(z_0)))=(\psi^\al)^j(\chi_k(z_1)).
\]
On the other hand, using lemma \ref{chi_k}, one checks that:
\[
\lim_{\overset{z\rit z_0}{z\in \wp^{j+1}(\De_k)}}\chi(z)=(\psi^\al)^{j+1}(\chi_k(\wp^{-j-1}(z_0)))=(\psi^\al)^j(\chi_k(z_1)),
\]
By construction, $\chi$ is a homeomorphism.
\end{proof}

\begin{lem}
For all $z\in S$, $\chi \circ \wp(z)=\psi^\al \circ \chi(z)$.
\end{lem}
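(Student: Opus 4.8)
The plan is to verify the relation $\chi\circ\wp=\psi^\al\circ\chi$ separately on each piece of the covering of $S$ that was used to build $\chi$: the tubes $\UU_k\cap S$, on which $\chi=\chi_k$, and the iterated fundamental domains $\wp^j(\De_k)$ for $1\le j\le m_k$, on which $\chi=(\psi^\al)^j\circ\chi_k\circ\wp^{-j}$. I would first fix the combinatorial picture: since $\dot u>0$ in each $\UU_k$ and $\De_k$ is a fundamental domain for $\wp$, every $z\in S$ lies either in some $\UU_k\cap S$ or in some $R_k$, and in the latter case in $\wp^j(\De_k)$ for a unique $j\in\{1,\dots,m_k-1\}$; points on the common boundaries $\wp^j(\de_k)$ will be handled by continuity of $\chi$, $\wp$ and $\psi^\al$. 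Recall also that the preceding lemma already shows that the two definitions of $\chi$ agree on the overlaps $\wp^{m_k}(\De_k)\subset\UU_{k+1}\cap S$, and Lemma~\ref{wp^m_k_et_chi_k} gives $(\psi^\al)^{m_k}\big(\chi_k(\wp^{-m_k}(z))\big)=\chi_{k+1}\big(\wp^{m_k}(z)\big)$.

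On the pieces $\wp^j(\De_k)$ the identity is essentially tautological. If $z\in\wp^j(\De_k)$ with $0\le j\le m_k-1$ (with $\wp^0(\De_k)=\De_k\subset\UU_k\cap S$ and $\chi|_{\De_k}=\chi_k$), then $\wp(z)\in\wp^{j+1}(\De_k)$, so when $j+1\le m_k$ one gets
\[
\chi(\wp(z))=(\psi^\al)^{j+1}\big(\chi_k(\wp^{-(j+1)}\wp(z))\big)=(\psi^\al)^{j+1}\big(\chi_k(\wp^{-j}(z))\big)=\psi^\al\big((\psi^\al)^j\chi_k(\wp^{-j}(z))\big)=\psi^\al(\chi(z)),
\]
directly from the definition of $\chi$. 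In the borderline case $j+1=m_k$ the point $\wp(z)$ lies in $\wp^{m_k}(\De_k)\subset\UU_{k+1}\cap S$, where $\chi(\wp(z))=\chi_{k+1}(\wp(z))$; rewriting this via Lemma~\ref{wp^m_k_et_chi_k} as $(\psi^\al)^{m_k}(\chi_k(\wp^{-m_k}\wp(z)))=(\psi^\al)^{m_k}(\chi_k(\wp^{-j}(z)))$ yields the same conclusion. In particular the subcase $z\in\De_k$ (whose image $\wp(z)$ has just crossed $\de_k$ into $R_k$) is covered.

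It remains to treat $z\in(\UU_k\cap S)\setminus\De_k$; such a $z$ lies ``before'' $\De_k$ (its $u$-coordinate is below the threshold defining $\De_k$ as the set of points within one return time of the exit section $\de_k$), so $\wp(z)$ is still in $\UU_k\cap S$ and $\chi(\wp(z))=\chi_k(\wp(z))$. Here I would invoke Lemma~\ref{chi_k}(3), which gives $\psi_{(k)}^\al(\chi_k(z))=\chi_k(\wp(z))$, together with the fact that on $\OO_k$ the glued flow $\psi$ coincides with the local flow $\psi_{(k)}$ up to the first hitting time of $\chi_k(\de_k)$; since $z\notin\De_k$ that hitting time exceeds $\al(r)=\tau(f(z))$ (with $r$ the second coordinate of $\chi_k(z)$), whence $\psi^\al(\chi_k(z))=\psi_{(k)}^\al(\chi_k(z))=\chi_k(\wp(z))$, i.e. $\psi^\al(\chi(z))=\chi(\wp(z))$. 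The main obstacle is not analytic but organizational: one must check that this case list is genuinely exhaustive — notably that $z\in(\UU_k\cap S)\setminus\De_k$ indeed forces $\wp(z)\in\UU_k\cap S$, and that the branch transitions, both of the definition of $\chi$ at the curves $\wp^j(\de_k)$ and of $\psi=\psi_k$ at $\chi_k(\de_k)$ where $\psi_{(k)}$ is glued to $\ha\psi_{(k)}$, are consistent, which is precisely what the well-definedness lemma and the construction of $\psi$ guarantee; no computation beyond those already done is required.
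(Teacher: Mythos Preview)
Your proof is correct and follows essentially the same approach as the paper: both dispose of $z\in(\UU_k\cap S)\setminus\De_k$ via Lemma~\ref{chi_k}(3), and both reduce the remaining case $z\in\wp^j(\De_k)$ to the tautological chain $(\psi^\al)^{j+1}\circ\chi_k\circ\wp^{-j}=\psi^\al\circ\chi$. Your case analysis is somewhat more explicit than the paper's (you separate out the borderline $j+1=m_k$ and appeal to Lemma~\ref{wp^m_k_et_chi_k} and the well-definedness lemma there), but the underlying argument is the same.
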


\begin{proof}
By lemma \ref{chi_k}, it remains to check the conjugacy when $z$ belongs to $\bigcup_{1=k}^p\bigcup_{j=0}^{m_k}\wp^j(\De_k)$. Fix $k\in\{1,\ldots,p\}$, $j\in \{0,\cdots m_k\}$  and $z\in \wp^j(\De_k)$.
Then:
\begin{multline}
\chi\circ \wp(z)=(\psi^\al)^{j+1}(\chi_k(\wp^{-j-1}(\wp(z)))\\
=\psi^\al((\psi^\al)^j(\chi_k(\wp^{-j}(z)))=\psi^\al(\chi(z))
\end{multline}
which concludes the proof.
\end{proof}

It remains to prove that we can construct  $\psi$  such that it satisfies conditions (C1), (C2), (C3) and (C4) of definition \ref{def:modelplan}. Conditions (C1) and (C2) are obviously realized. To get conditions (C3) and (C4), we have to be more precise about the choice of the functions $\xi_k$ defined on the domains $\RR_k$.

\begin{proof}[Proof of lemma II]
For $u\in [0,\frac{1}{p}]$, we set $\de_u:=\{\frac{1}{p}+u\}\times [0,1]\subset \RR_1$.
Using remark  \ref{sig_k,k+1borneInf}, we see there exists $u_0\geq \us$ such that $\psi([0,1], \de_{u_0})\subset \overset{\circ}{\RR_1}$.
Let $u_1\in\, ]u_0, \frac{1}{p}[$ be such that $\psi([0,1], \de_{u_0})\subset [\frac{1}{p}+u_0,\frac{1}{p}+u_1]\times [0,1]$.
Set $\be=\frac{1}{p}-2\us$.

As for $\xi_1$, we choose a  continuous and piecewise $C^1$ function on $\RR_1=[\frac{1}{p}+u_0,\frac{1}{p}+u_1]\times[0,1]$, constant and equal to 
\begin{equation}\label{eq:maj}
M:=\max(\be/2,\max_{\substack{1\leq k\leq p\\r\in [0,1]}}\lam_k(ar)\sqrt{(u^*)^2+\mu_k(ar)})
\end{equation}
over $[\frac{1}{p}+u_0,\frac{1}{p}+u_1]\times [0,1]$, and we choose the values  of $\xi_1$ for $(\th,r)\notin [\frac{1}{p}+u_0,\frac{1}{p}+u_1]\times [0,1]$ in order  to satisfy the relation $(**)$ for each fixed $r$, which is possible since $\sig_{1,2}(r)$ is bounded below by $2$.
For $k\geq 2$, let us choose
\[
\xi_k(\th,r)=\frac{\be}{\sig_{k,k+1}(r)}.
\]
 We moreover require
that 
\[
\xi_1(\th,r)\leq \xi_1(\th,r')
\]
if $r\leq r'$ and $\th\in[\frac{1}{p}+\us,\frac{2}{p}-\us]$. Such a choice is obviously possible since the function $\sig_{1,2}$ is decreasing.
Let us check that conditions (C3) and (C4) are realized.
\vspace{0.2cm}

$\bullet$ We begin with the tameness condition (C4). 
Set $\KK=\psi([0,1],\de_{u_0})$ and fix  two points  $z=(\th,r)$ and $z'=(\th',r)$ in $\KK$, on the same orbit, with lifts $(x,r)$ and $(x',r)$ in the universal covering $\R\times[0,1]$.
We write as usual $\ti\psi$ for the lifted flow.
There exists a unique $t_0\in\, ]0,1]$ such that $(x',r)=\ti\psi^{t_0}(x,r)$.  
Now, setting $(x(t),r)$ and $(x'(t),r)$ for $\ti\psi^t(x,r)$ and $\ti\psi^t(x',r)$, the separation function is defined by $E_{z,z'}(t)=x'(t)-x(t)$.

By construction, for any $(\th, r)\in \KK$, one has
\begin{itemize}
\item  $M \geq \max X_k(\th', r))$ for any $(\th',r)\in \OO_k$ and any $k\in \{1,\ldots,p\}$,
\item $M \geq \max \ha{X}_k(\th', r))$ for any $(\th',r)\in \RR_k$ and any $k\in \{1,\ldots,p\}$.
\end{itemize}
 Therefore,
\begin{equation}\label{septime}
E_{z,z'}(t)\leq \int_t^{t+t_0}Mds=t_0 M,
\end{equation}
Obviously, the maximum of $E_{z,z'}$ is achieved when $t=0$ and the tameness condition is proved for the fundamental domain $\KK$.

\vskip2mm

$\bullet$ The torsion condition (C3) is easy. It suffices to check that (C3) is  satisfied in each domain $\OO_k$ and $\RR_k$, that is, one has to verify that for $r'\geq r$ in $[0,1]$,
\begin{enumerate}
\item for all $\th$ such that $(\th,r)\in \OO_k$,  $X_k(\th,r')>X_k(\th,r)$,
\item for all $\th$ such that $(\th,r)\in \RR_k$, $\ha{X}_k(\th,r)>\ha{X}_k(\th',r)$.
\end{enumerate}
The first point is an immediate consequence of the fact that $\mu_k$ is an increasing function.
The second point is an immediate consequence of the fact that $\sig_{k,k+1}$ is decreasing.
\end{proof}

\subsubsection{ Proof of Theorem 3.}
Consider the $p$-model $\al\otimes\psi$ on $\jA$. 
Recall that for any $z\in \DD_a\setm S$, there exists a unique pair $(t_z^-,t_z^+)\in \R^-\times\R^+$, such that $\phi_H(t_z^-,z)\in S, \, \phi_H(t_z^+,z)\in S$ and $\phi_H(]t_z^-,t_z^+[,z)\cap S=\emptyset$.  Moreover if $z\in f\inv(\rho)$, $t_z^+-t_z^-=\tau(\rho)$.

Consider the map 
$\wti{\chi} : \ov{\DD_a}  \rit \jA$ defined by 
\begin{itemize}
\item $\wti{\chi}(z)=(0,\chi(z))$ if $z\in S$
\item  $\wti{\chi}(z)=\al\otimes\psi(-t_z^-,\wti{\chi}(\phi_H(t_z^-,z)))$
\end{itemize}

\begin{proof}[Proof of Theorem 3] We will prove that $\wti{\chi}$ is a compatible homeomorphism that conjugates $\phi_H$ and $\al\otimes \psi$.
We begin by checking the continuity of $\wti{\chi}$. Obviously, $\wti{\chi}$ is continuous on $\DD_a\setm S$. Let us check the continuity in $S$. 
Let $\eps=\frac{1}{3}\Min_{\rho\in [0,a]}\tau(\rho)$. Let $V^+:=\{z\in \DD_a\,|\, t_z^+\in [0,\eps]\}$ and $V^+:=\{z\in \DD_a\,|\, t_z^-\in [-\eps,0]\}$.
Then $V^+\cup V^-$ is a neighborhood of $S$ and $V^-\cap V^+=S$. Fix $z_0\in S\cap f\inv(\rho)$. One has to check that
\[
\lim_{\substack{z\rit z_0\\z\in V^+}}\wti\chi(z)=\wti\chi(z_0)= \lim_{\substack{z\rit z_0\\z\in V^-}}\wti\chi(z).
\]
Now, if $z\rit z_0$ and $z\in V^+$, then $t_z^+\rit 0$ and $t_z^-\rit -\tau(\rho)$. Therefore
\begin{equation*}
\begin{aligned}
\lim_{\substack{z\rit z_0\\z\in V^+}}\wti\chi(z) &=\lim_{\substack{z\rit z_0\\z\in V^+}}\al\otimes\psi(-t_z^-,\wti\chi(\phi_H(t_z^-,z))) &\\
&=\al\otimes\psi\left(-\tau(\rho),\left(0,\chi(\phi_H(-\tau(\rho),z_0))\right)\right) &\\
& = \al\otimes\psi(-\tau(\rho),(0,\chi\circ \wp\inv(z_0)))\\
&=(0,\psi^\al\circ\chi\circ \wp\inv(z_0)) & \\
&  = (0,\chi\circ\wp\circ \wp\inv(z_0)) \\
&=(0,\chi(z_0)).
\end{aligned}
\end{equation*}
On the other hand, if $z\rit z_0$ and $z\in V^-$, then $t_z^-\rit 0$. Therefore
\[
\lim_{\substack{z\rit z_0\\z\in V^-}}\wti\chi(z)=\al\otimes\psi(0,\wti\chi(\phi_H(0,z_0))=\al\otimes\psi(0,(0,\chi(z_0)))=(0,\chi(z_0)).
\]
By construction, $\wti{\chi}$ is a homeomorphism. It remains to check that it conjugates $\al\otimes \psi$ and $\phi_H$.
Fix $z\in \DD_a\cap f\inv(\{\rho\})$ and $t\in \R$. Let $m\in \Z$ and $s\in [0,\tau(\rho)[$ such that $t=m\tau(\rho)+ s$. Let $z_0:=\phi_H(t_z^-,z)\in S$.
Then
\begin{equation*}
\begin{aligned}
\wti{\chi}\circ\phi_H(t,z) &=\wti{\chi}\circ\phi_H(s-t_z^-,\wp^m(z_0))\\
 &=\al\otimes\psi(s-t_z^-,(0,(\psi^\al)^m\circ\chi(z_0)))\\
 &=\al\otimes\psi(s-t_z^-+m\tau(\rho),(0,\chi(z_0)))\\
 &=\al\otimes\psi(s+m\tau(\rho),(\al\otimes\psi(-t_z^-,\wti\chi(\phi_H(t_z^-,z))))\\
 & =\al\otimes\psi(t, \wti\chi(z)),
\end{aligned}
\end{equation*}
which concludes the proof.
\end{proof}

\subsection{The polynomial entropy of a $p$-model system}

This section is devoted to the proof of proposition \ref{hpolpmodel} in the precise setting we have now at our disposal: namely, the $p$-model
system at hand will be that we constructed in the previous sections. The main remark is that we will have the possibility to choose the minimal 
period of the $p$-model by simply reducing the parameter $a$ of our partial neighborhood.
We first emphasize the following remarks. 

\begin{rem}\label{periode}
\noindent 1) For $r\in\,\,]0,1]$, we denote by $T(r)$  the period of the orbit $\T\times \{r\}$. By
condition (C1)  
\[
T(r)\sim_{r=0}-\sum_{\ell=1}^{p-1}\frac{\lam_\ell(r)}{\ln(\mu_\ell(r))}\sim_{r=0}-\sum_{\ell=1}^{p-1} \frac{\lam_\ell(r)}{\ln r}, 
\]
the last equivalent coming from $\mu_k(0)'\neq 0$.

\noindent 2) 
Let $\be$ be such that $[\kp-\be, \kp+\be]\subset \OO_k$, for any $1\leq k\leq p$, and set
\[
\tau_k(r,\be):=\int_{\kp+\be}^{\kp-\be} \frac{d\th}{\lam_k(r)\sqrt{(\kp-\th)^2+\mu_k(r)}}.
\]
One has:
\[
\frac{T(r)}{\tau_k(r,\be)}\sim_{r=0}\sum_{\ell=1}^{p-1}\frac{\lam_\ell(0)}{\lam_k(0)}.
\]
\noindent 3) Due to the torsion condition, $r\ma T(r)$ is a strictly decreasing function from $]0,1]$ to $[q^*,+\infty[$, where $q^*$ is the period of the motion $\psi$ on
$\T\times\{1\}$.
\end{rem}

\begin{nota}
For the sake of simplicity, in the whole proof of proposition \ref{hpolpmodel}, we write $\phi^t$ instead of $(\al\otimes\psi)^t$.
\end{nota}

\newcommand{\hd}{\hat{d}}

\begin{proof}[Proof of proposition \ref{hpolpmodel}]
We denote by $a$ an element $(\th,r)\in \ha{\jA}$. For $\vp\in \T$ and $k\in \N$, we set $\vp_r(k):=\vp+k\al(r)$. Therefore
\[
\phi^k(\vp,a)=(\vp_r(k),\psi^k(a)).
\]
We denote by $\de$ the natural quotient distance on $\T$ and by $\hat{d}$ the product metric on $\T\times [0,1]$. We denote by $d:= \de\times \hd$, the product metric on $\jA$.
For $k\in \N$, we denote by $d_k$ the dynamical distances in $\T^2\times [0,1]$ associated with the motion $\phi$ and by $\hat{d}_k$ the distances in $\T\times[0,1]$ associated with $\psi$.

Remark that for $\vp$ in $\T$ and $a=(\th,r),a'=(\th',r')$ in $A\subset \T\times[0,1]$, one has:
\begin{align*}
d_N^\phi((\vp,a),(\vp',a'))  &=\max_{0\leq k\leq N}d(\phi^k(\vp,a),\phi^k(\vp',a'))\\
&= \max_{0\leq k\leq N}\max\bigg(\de(\vp_r(k),\vp_{r'}'(k)),\hd(\psi^k(a),\psi^k(a'))\bigg)\\
&= \max\left( \max_{0\leq k\leq N}\left(\de(\vp_r(k),\vp_{r'}'(k)\right),\max_{0\leq k\leq N}\hd(\psi^k(a),\psi^k(a'))\right)\\
& = \max\left( \max_{0\leq k\leq N}\left(\de(\vp_r(k),\vp_{r'}'(k)\right),\hd_N(a,a')\right)\qquad\qquad (\star)
\end{align*}

Let us introduce some notation.
\vskip1mm\noindent
-- Given $r\in\,]0,1]$, we set $C^r:=\T\times \{r\}\subset \ha{\jA}$.\index{$C^r$}
\vskip1mm\noindent
-- According to remark \ref{periode} one can  label the orbits $C^r$ by their period: we write $C_q$ the orbit with period 
$q$ for $q\in [q^*,+\infty[$. So $C^r=C_{T(r)}$.\index{$C_q$}
We write $C_{\infty}$ for the boundary $\T\times\{0\}$. 
\vskip1mm\noindent
-- Given two periods $q'\leq q\leq+\infty$, we denote by $\ha{S}_{q,q'}$ the annulus $\subset \ha{\jA}$
bounded  by the curves $C_q$ and $C_{q'}$.\index{$\ha{S}_{q,q'}$}
\vskip1mm\noindent
 -- When $a$ and $b$ are two points on the same curve $C_q$, we denote by
$[a,b]$ the set of all points of $C_q$ located between  $a$ and $b$, relatively to the direct orientation of $C_q$.
\vskip1mm\noindent
-- We denote respectively by $\phi$ and by $\psi$ the time-one  maps of the flows $(\phi^t)_t$ and $(\psi^t)_t$.
\vskip5mm

\paraga \emph{Proof of $\hp(\phi)\geq 2$.}
Recall  that for $\eps>0$, a subset $A$ of a compact metric space  $(X,d)$ is said to be \textit{$(N,\eps)$-separated} relatively to a continuous map $f:X\rit X$ if, for any $a$ and $b$ in $A$,  $\sup_{0\leq k\leq N}d(f^k(a),f^k(b))\geq \eps$. 
If $S_N^f(\eps)$ is the maximal cardinal for a $(N,\eps)$-separated set, one has the following inequalities
\[
S_N^f(2\eps)\leq G_N^f(\eps)\leq S_N^f(\eps).
\]
Therefore, $\hp(f) =\lim_{\eps\rit 0}\limsup_{n\rit\infty}\frac{\Log S_n^f(\eps)}{\Log n}$.
Given $\eps>0$, we want to find, for $N$ large enough, a $(N,\eps)$-separated set (relatively to $\phi$) with cardinal $\geq c_0 N^2$ for a constant $c_0>0$.

It suffices to find  a $(N,\eps)$-separated in $\T\times [0,1]$ with cardinal $\geq c_0 N^2$ relatively to $\psi$. 
Indeed assume we are given such a set $A(N,\eps)\in \T\times [0,1]$ .
Then by $(\star)$, since $\hd_N(a,a')\geq \eps$, one has $d_N((\vp,a),(\vp,a'))\geq \eps$.

Fix a vertical segment $I_{\th_0}:=\{\th=\th_0\}\subset \T\times[0,1]$. For $q\in [q^*,+\infty]$, we set $a_q:=I_{\th_0}\cap C_q$. 
We choose $\th_0$ such that $a_\infty$ is not a singular point of $V$, that is $a_\infty\neq (\kp,0)$, so 
$\psi(a_\infty)\neq a_\infty$ and $\psi\inv(a_\infty)\neq a_\infty$. 

\begin{rem}\label{intervalCinfini}
Due to the torsion condition, for each $q\geq 3$ the projection on $C_\infty$ of the interval $[\psi(a_q),\psi^{[q/2]}(a_q)]$ 
is contained in $[\psi(a_\infty),\psi\inv(a_\infty)]$. 
\end{rem}
Assume the two (redondant) conditions
\[
\eps<\min\left(\hd(a_\infty, \psi(a_\infty)), \hd(a_\infty, \psi\inv(a_\infty))\right)\hspace{4cm} (1)
\]
and that 
\[
\eps <\min\left(\hd(a_\infty, \psi\inv(a_\infty)),\hd(\psi(a_{\infty}),\psi^2(a_{\infty})\right).\hspace{4cm} (2)
\]
In the following we assume that $q\geq 3$.
\vskip2mm

\noindent\textbf{Step 1}: \emph{If $N\geq q$, $C_q$ contains an $(N,\eps)$-separated set with cardinal $[q/2]$. } Fix  $N\geq q$.
For $k\geq0$ we set $a^{(k)}=\psi^{-k}(a_q)$.
\begin{proof} Then, by $(1)$ and remark \ref{intervalCinfini} and since
$\psi^{k'-k}(a_q)\in[\psi(a_q),\psi^{[q/2]}(a_q)]$, one has
\[
\hd_N\big(a^{(k)},a^{(k')}\big)\geq \hd\big(\psi^{k'}(a^{(k)}),\psi^{k'}(a^{(k')})\big)=\hd\big(\psi^{k'-k}(a_q),a_q\big)> \eps,
\]
forall $0\leq k<k'\leq [q/2]$.
Therefore, the set $\{a^{(k)}\mid 1\leq k\leq [q/2]\}$ is $(N,\eps)$-separated
\end{proof}

\noindent
\textbf{Step 2} \emph{If $N\geq 18$ and $(q,q')\in [\frac{N}{3},\frac{N}{2}]^2$ with $3\leq q-q'\leq q-3$, the pairs of points $(a,a')\in C_q\times C_{q'}$ are  $(N,\eps)$-separated.}
\begin{proof}Let us  introduce the domains: 
\[
I_q=[a_q,\psi(a_q)[\ \subset C_q,\qquad J_{q'}=[\psi\inv(a_{q'}),\psi^2(a_{q'})[\ \subset C_{q'}.
\]
Thanks to the torsion condition,  by $(2)$, the distance between $I_q$ and the complement  $C_{q'}\setm J_{q'}$ is
larger than $\eps$, for each pair $(q,q')$ in $[q^*,+\infty[$.

Now assume that $q$ and $q'$ are contained in the interval $[N/3,N/2]$ and satisfy $q-q'\geq 3$.
Consider two points $a\in C_q$ and $a'\in C_{q'}$. 
There exists a unique integer $n_0\in \{0,\ldots,q-1\}$ such that 
$\psi^{n_0}(a)\in I_q$. Thus:
\vskip1mm
\noindent (i)
if $\psi^{n_0}(a')\in C_{q'}\setm J_{q'}$, then $d_N(a,a')\geq d_{n_0}(a,a')>\eps$. 
\vskip1mm\noindent
\noindent (ii)
if $\psi^{n_0}(a')\in J_{q'}$, observe that $\psi^{q-q'}(J_{q'})\cap J_{q'}=\emptyset$. Indeed,
\[
\psi^{q+n_0}(a')=\psi^{q} (\psi^{n_0}(a'))=\psi^{q-q'} (\psi^{n_0}(a')),
\]
with $q'\geq 6$ and $3\leq q-q'\leq q-3$ by assumption on $N$ and $q,q'$. So $\psi^{q+n_0}(a')\notin J_q$ and,
by periodicity, $\psi^{q+n_0}(a)\in I_q$.

Therefore, using $(2)$:
$
\hd_N(a,a')\geq \hd(\psi^{q+n_0}(a),\psi^{q+n_0}(a'))>\eps.
$
\end{proof}

\noindent\textbf{Step 3}: \emph{If $N\geq 18$, $S_{N/3,N/2}$ contains a $(N,\eps)$-separated set with cardinal $\geq N^2/108$.}
\begin{proof} In the  interval $[N/3,N/2]$, there exist at least $[N/18]$ distinct integers   $(q_i)$ with $q_j-q_i\geq 3$ if $i\neq j$. On each curve $C_{q_i}$, one can find an $(N,\eps)$--separated
subset with $[q_i/2]\geq [N/6]$ elements by step 1, and the union of all these subsets is still $(N,\eps)$--separated by step 2. 
Therefore the strip limited by the curves $C_{N/3}$ and $C_{N/2}$ contains a $(N,\eps)$--separated subset
$A(N,\eps)$ with more than $N^2/108$ elements.
\end{proof}

\noindent\textbf{Conclusion}: Fix $\vp\in \T$. The set $\{(\vp,a)\,|\, a\in A(N,\eps)\}$ is $(N,\eps)$-separated with cardinal $\geq N^2/108$. Therefore, for $N>18$, $S_N^\phi(\eps)\geq N^2/108$ and $\hp(\phi)\geq 2$.
\vspace{0.5cm}

\paraga \emph{Proof of $\hp(\phi)\leq 2$.}
If $f$ is a continuous map on a compact set $X$, we 
denote by $D_n^f(\eps)$ the smallest number of sets $X_i$ with $d_n^f$-diameter $\eps$ whose union covers $X$.
Then, one has 
\[
D_n^f(2\eps)\leq G_n^f(\eps) \leq D_n^f(\eps).
\]
Therefore, $\hp(f) =\lim_{\eps\rit 0}\limsup_{n\rit\infty}\frac{\Log D_n^f(\eps)}{\Log n}$.

The main idea of the construction will be to take advantage of the explicit coverings constructed in \cite{Mar-09} for planar $p$-models
and to prove that in the product system on $\jA$, the domains involved in this covering are so small that they induce a negligible distorsion
in the $\varphi$ variable. So we will be able to construct a covering for the present $p$-model simply by taking the product of the
domains of the planar $p$-model with small enough intervals in the $\varphi$-direction.

If $A$ is a subset of $\jA$ invariant by $\phi$,  for $n\geq 1$, we denote by $D_n^\phi(A,\eps)$ the minimal cardinal of a covering of  $A$ by subsets with $d_N$-diameter $\eps$. We define in the same way, for  $\ha{A}\subset \hjA$ invariant by $\psi$, the number $D_n^\psi(\ha{A},\eps)$.
Given $\eps>0$, we want to get, for $N$ large enough, a majoration of the form $D_N^\phi(\jA,\eps)\leq c_0 N^2+c_1 N+c_2$.
To do this, we discriminate between the behavior of the dynamics near $r=0$ and the behavior of the dynamics near $r=1$. We split $\ha{\jA}$ into two $N$-depending sub-annuli $\ha{\jA}_N$ and $\ha{\jA}_N^*$ and we estimate $D_N^\phi(\jA_N,\eps)$ and $D_n^\phi(\jA_N^*,\eps)$, where $\jA_N:=\T\times \hjA_N$ and $\jA_N^*:=\T\times \hjA_N^*$. Fix $\eps>0$.

\vskip2mm
\noindent\textbf{Step 1: Construction of the sub-annuli $\ha{\jA}_N$ and $\ha{\jA}_n^*$}.
The choice of the cutoff is based on the following lemma.

\begin{lem}\label{lem:choice} 
For $k\in\{1,\ldots,p\}$, let  $B_k(\eps)$ be the ``block'' of $\hjA$ limited by the  vertical
segments $\De^+_k$ and $\De^-_k$ of equations $\th=k/p-\eps/2$ and $\th=k/p+\eps/2$ respectively.
There exists a constant $\ka$ and an integer $N_0$  (both depending on $\eps$) such that if $N\geq N_0$,  for each index $k\in\{0,\ldots,p-1\}$:
\[
\psi^n(\De^-_k(\ka N))\subset B_k(\eps),\qquad \forall n\in\{0,\ldots,N\},
\]
where we write $\De^-_k(q)$ for the intersection of the left vertical $\De^-_k$ of $B_k(\eps)$ with the annulus $\ha{S}_{\infty,q}$.
\end{lem}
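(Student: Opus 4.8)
The plan is to exploit the logarithmic blow-up, as $r\to0^+$, of the time the flow $\psi$ spends in a small $\th$-block around each hyperbolic fixed point $z_k=(\kp,0)$. We may assume $\eps<1/(4p)$, so that $B_k(\eps)\subset\OO_k$ and on $B_k(\eps)$ the flow is generated by the vector field $V_k$ of condition (C2). Since $\lam_k,\mu_k$ are positive on $[0,1]$ with $\mu_k(0)=0$ and $\mu'_k(0)>0$, the time $t_k(r)$ needed by the orbit issued from the point $(\kp-\eps/2,r)$ of the entry vertical $\De^-_k=\{\th=\kp-\eps/2\}$ to reach $\{\th=\kp+\eps/2\}$ is
\[
t_k(r)=\int_{\kp-\eps/2}^{\kp+\eps/2}\frac{d\th}{\lam_k(r)\sqrt{(\th-\kp)^2+\mu_k(r)}}=\frac{2}{\lam_k(r)}\,\Argsh\!\Big(\frac{\eps}{2\sqrt{\mu_k(r)}}\Big),
\]
and, using $\Argsh(x)=\ln x+\ln 2+o(1)$ as $x\to\infty$ together with $\mu_k(r)=\mu'_k(0)\,r+o(r)$, one obtains $t_k(r)\sim-(\ln r)/\lam_k(0)$ as $r\to0^+$.

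Next I would compare $t_k(r)$ with the period $T(r)$ of the orbit $C^r=\T\times\{r\}$. Decomposing one revolution along $C^r$ into the successive crossings of the blocks $B_\ell(\eps)$ and the remaining portion of the trip (whose duration stays bounded in $r$, the vector field being bounded below away from the $z_k$ for the model constructed above), we get $T(r)=\sum_\ell t_\ell(r)+O(1)$, hence $T(r)\sim-(\ln r)\sum_\ell\lam_\ell(0)^{-1}$ as $r\to0^+$ (compare remark \ref{periode}). Consequently $t_k(r)/T(r)\to\al_k:=\big(\sum_\ell\lam_\ell(0)/\lam_k(0)\big)^{-1}\in\,]0,1[$, so there is $r^*>0$ with $t_k(r)\geq\tfrac12\al_k\,T(r)$ for all $0<r\leq r^*$ and all $k$. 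Set $\al_0:=\min_k\al_k$ and $\ka:=2/\al_0$. By the torsion condition (remark \ref{periode}(3)), $T$ is a strictly decreasing homeomorphism of $]0,1]$ onto $[q^*,+\infty[$; let $N_0$ be any integer with $\ka N_0\geq T(r^*)$.

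To conclude, fix $N\geq N_0$, $k\in\{0,\dots,p-1\}$ and a point $z\in\De^-_k(\ka N)$, say $z=(\kp-\eps/2,r)$ with $(\kp-\eps/2,r)\in\ha{S}_{\infty,\ka N}$. If $r>0$, this forces $T(r)\geq\ka N$; since $\ka N\geq\ka N_0\geq T(r^*)$ and $T$ is decreasing, $r\leq r^*$, whence $t_k(r)\geq\tfrac12\al_0\,T(r)\geq\tfrac12\al_0\ka N=N$. Along the orbit of $z$ one has $\dot\th=V_k(\th,r)>0$ as long as the orbit lies in $B_k(\eps)$, so $\th$ increases strictly from $\kp-\eps/2$ and the orbit remains in $B_k(\eps)$ throughout the flow-time interval $[0,t_k(r)]\supseteq\{0,1,\dots,N\}$; thus $\psi^n(z)\in B_k(\eps)$ for $0\leq n\leq N$. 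If $r=0$, condition (C2) forces $\th$ to increase monotonically from $\kp-\eps/2$ towards $z_k$ without ever reaching it, so the whole forward orbit of $z$ stays in $[\kp-\eps/2,\kp)\subset B_k(\eps)$. Since $z\in\De^-_k(\ka N)$ was arbitrary, $\psi^n(\De^-_k(\ka N))\subset B_k(\eps)$ for every $n\in\{0,\dots,N\}$.

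The only substantive point is the pair of asymptotics above — that the sojourn time of the flow in an $\eps$-block about a hyperbolic fixed point grows like $|\ln r|$ and is a fixed positive fraction $\al_k$ of the full period $T(r)$. Granting this, the definition of $\ka$ and $N_0$ together with the monotonicity of $T$ make the inclusion immediate; the remaining bookkeeping (the case $r=0$ and the uniform bound on the out-of-block travel time) is routine.
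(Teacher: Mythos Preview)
Your proof is correct and follows essentially the same route as the paper's: both hinge on the asymptotic $t_k(r)/T(r)\to\al_k>0$ of remark~\ref{periode}(2), choose $\ka$ to be (essentially) $2/\min_k\al_k$, and take $N_0$ so that $\ka N_0$ exceeds the period at the threshold $r^*$. You give more detail than the paper (the explicit $\Argsh$ computation, the monotonicity of $\th$ inside $B_k(\eps)$, and the separate treatment of $r=0$), which is fine. One cosmetic remark: the exact value you write for $\al_k$, namely $(\sum_\ell\lam_\ell(0)/\lam_k(0))^{-1}$, has the indices swapped---from your own asymptotics $t_k\sim -\ln r/\lam_k(0)$ and $T\sim -\ln r\sum_\ell 1/\lam_\ell(0)$ one gets $\al_k=\lam_k(0)^{-1}/\sum_\ell\lam_\ell(0)^{-1}$; this is the same typo as in the paper's remark and is immaterial to the argument, since all you use is that each $\al_k$ is a positive constant.
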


\begin{proof}
With the notation of remark \ref{periode} (2), we set 
\[
\ka:=\left[2\max \sum_{\ell=1}^{p-1}\frac{\lam_\ell(0)}{\lam_k(0)}\right]+1
\]
Then, by remark \ref{periode}, there exists $r_0>0$ such that for $r<r_0$, $\tau_k(r,\eps/2)>\frac{1}{\ka}T(r)$, that is,
\[
\psi^{\frac{1}{\ka}T(r)}(\De_k^-(T(r))\in B_k(\eps).
\]
The lemma is proved with $N_0:=\lceil T(r_0)\rceil$.
\end{proof}

\newcommand{\hS}{\ha{S}}

For $q\in [q^*,+\infty[$, we write $r(q):=T\inv(q)\in\, ]0,1]$. 
By remark \ref{periode} (3), there exists $N_1\in \N^*$ such  that if $q>N_1$, $r_q\leq \eps$.

We choose $N\geq \Max(N_0, N_1)$ and we set: $\hjA_{N}=\hS_{\infty,\ka N}$,\, $\hjA^*_{N}=\hjA\setm \hjA_{N}$,\, $\jA_N:=\T\times \hjA_N$ and 
$\jA_N^*:=\T\times \hjA_N^*$.
\vspace{0.2cm}
 
We will use twice the following easy remark.

\begin{rem}\label{periode2} 
By remark \ref{periode} (1), for $q^*$ large enough, there exists $\bar c\in \R$ such that
\[
r'(q)\geq e^{-\bar c q}.
\]

%
\end{rem}


\vskip2mm
\noindent\textbf{Step 2: Covering of $\jA_{N}$ }.
We begin by constructing a covering of $\hjA_N$.
For $k\in\{1,\ldots,p-1\}$ and $r\in [0,1]$, we write $a_k(r):=(\frac{k}{p}+\frac{\eps}{2},r)=\De_k^+\cap C^r$.
We set
\[
\nu_k:=\Min\{\ell\in \N^*\,|\,\psi^{\ell}(a_k(0))\in B_{k+1}(\eps)\}\quad\rm{	and}\quad \nu=\Max_k\nu_k.
\]
By the torsion property, and since $\tau_{k+1}(r,\eps/2)\rit\infty$ when $r\rit 0$,  there exists $r^*$ such that if $r\in [0,r^*]$, 
$\psi^\nu(a_k(r))\in B_{k+1}(\eps)$, for all $k\in \{1,\cdots,p\}$.
We set $N_2:=\frac{1}{\ka}\lceil T(r^*)\rceil$. Then for $N\geq N_2$, 
\begin{equation}\tag{**}
\psi^{\nu}(\De_k^+(\ka N))\subset B_{k+1}(\eps).
\end{equation}

We set $\BB=\cup_{1\leq k\leq p}B_k(\eps)$.
By compactness, there exists a finite covering $B_1,\ldots, B_{i^*}$  of $\hjA_N\setm \BB$ with $\hd_\nu$--diameter  
$\leq\eps$. Moreover, one can obviously assume that each $B_i$ is contained in some connected component of
$\hjA_N\setm \BB$. 

We claim that, if $N>\Max(N_0,N_1, N_2)$, for any $n$ in $\{\nu, \ldots, N\}$,  $\psi^n(B_i)$ is contained in some $B_k(\eps)$. Indeed, assume that $B_i$ is contained in the zone limited by the curves $\De_k^+(\ka N)$ and 
$\De_{k+1}^-(\ka N)$  (according to the direct
orientation on $\T$). Then the iterate $\psi^n(B_i)$ is contained in the region limited by  $\psi^\nu(\De_k^+(\ka N))$ and $\psi^N(\De_{k+1}^+(\ka N))$. Both of these boundaries are contained in $B_{k+1}(\eps)$, the first one by $(**)$ and the second one by lemma \ref{lem:choice}.

By assumption on $N$, the $d$--diameter of $B_{k+1}(\eps)$ is $\eps$. Therefore,
\[
\diam_N(B_i)\leq \eps 
\]
where we denote by $\diam_N$ the diameter associated with the distance $\hd_N$.


Now, we can assume that $\eps$ is small enough and $N$ is large enough so that the intersection $\psi(\De_{k}^-(\ka N))\cap B_{k+1}(\eps)$ is empty.
Consider the regions $\UU_k$ in $\hjA_{N}$ bounded by $\De_k^+(\ka N)$ and $\psi(\De_{k}^+(\ka N))$.

The nonempty intersections $\UU_k\cap B_i$ form a finite covering $U_1,\ldots, U_{i^{**}}$ of the union $\cup_{1\leq k\leq p}\UU_k$, with cardinal $i^{**}\leq i^*$.
For $1\leq i\leq i^{**}$, one has:
\[
\diam_N(U_i)\leq \eps.
\]  
Let $\VV_k$ be the region bounded by $\psi^{-N}(\De_k^+(\ka N))$ and $\De_k^+(\ka N)$ (relatively to the direct orientation of $\T$). 
By the same arguments as in the beginning, one sees that $\VV_k\subset B_k(\eps)$. 
Moreover the inverse images
\[
B_{n,i}=\psi^{-n}(U_i),\qquad 1\leq n\leq N, \ 1\leq i\leq i^{**},
\]
form a covering of the union $\VV=\cup_{k\in\Z_p} \VV_k$ which is contained in $B_k(\eps))$. 
By construction, each of the $B_{n,i}$ satisfies $\diam_NB_{n,i}\leq \eps$.

Finally,  observe that for each $k$, the complement $B_k(\eps)\setm \VV$ satisfies 
\[
\psi^n(B_k(\eps)\setm \VV)\subset B_k(\eps)
\]
for $0\leq n\leq N$, and therefore $\diam_N(B_k(\eps)\setm \VV)\leq\eps$. 
Hence, the subsets 
\[
(B_i)_{1\leq i\leq i^*},
\quad(B_{n,i})_{1\leq n\leq N,\ 1\leq i\leq i^{**}},\quad
(B_k(\eps)\setm \VV)_{1\leq k\leq p},
\]
form a covering $\CC(N,\eps)$ of $\hjA_{N}$ with subsets of $\hd_N$--diameter $\leq\eps$ and 
\[
\Card \CC(N,\eps)\leq i^*+Ni^{**}+p.
\]

We use $(\star)$ to construct a covering of $\jA_N$ with $d_N$-balls of radius $\eps$.
Fix an element $B$ of $\CC(N,\eps)$. Let $a=(\th,r), a'=(\th',r')$ be two elements of $B$ and let $(\vp,\vp')\in \T^2$,  
By $(\star)$, one has 
\[
d_N((\vp,a),(\vp',a'))=\max\left( \max_{0\leq k\leq n}\left(\de(\vp_r(k),\vp_{r'}'(k)\right),\eps\right).
\]
Now, for any $1\leq k\leq N$, 
\[
\de(\vp_r(k),\vp'_{r'}(k))\leq \de(\vp,\vp')+k|\al(r)-\al(r')|\leq  \de(\vp,\vp')+N\max_{[0,1]}\al'(r)|r-r'|.
\]
By remark \ref{periode2}, if $q^*$ is large enough (which is always possible, reducing if necessary the width of the partial neighborhood of the
initial polycycle), there exists  $N_3$ such that for all $N\geq N_3$, 
\[
N\max_{[0,1]}|\al'(r)||r-r'|\leq N\max_{[0,1]}|\al'(r)|\ r(\ka N)\leq \eps/2.
\]

Therefore, if $I$ is a subset of $\T$ with $\de$-diameter $\eps/2$, the product $I\times B$ with $B\in \CC(N,\eps)$ is a subset with $d_N$-radius $\eps$ as soon as $N\geq \max(N_0,N_1,N_2,N_3)$.
This yields  a cover of $\jA_N$ with subsets with $d_N$-diameter $\eps$ of cardinal $\leq  \frac{2}{\eps}(i^*+Ni^{**}+p)$.
 
\vskip4mm
\noindent\textbf{Step 3: Covering of $\jA_{N}^*$ }.
Set $k^*=[\ka N/q^*]-1$. For $1\leq k\leq k^*$, we set $\ha{{\bf S}}_k:=\ha{S}_{\frac{\ka N}{k},\frac{\ka N}{(k+1)}}$.
 Clearly, the family $(\ha{{\bf S}}_k)$ covers $\hjA_{N}^*$. 
Assume we are given a minimal covering $\CC_k(N,\eps)$ of $\ha{{\bf S}}_k$ with subset of $\hd_N$-diameter smaller than $\eps$.
To form a covering of the complete annulus $\jA$, we will see that it is enough to take the product of the elements of $\CC_k(N,\eps)$ 
with small enough intervals in the $\varphi$ direction. Let us first construct the
covering $\Card \CC_k(N,\eps)$, since the form of its elements will play a crucial role.

 \begin{lem}\label{lem:fatten}
Consider an integer $m\geq q^*$, and fix $\eps>0$. There exists positive constants 
$c_1$  and $c_2$, depending only on $\eps$, 
such that if  the pair $(q,q')\in [q^*,m]^2$ satisfy
\[
0\leq q'-q \leq \dfrac{c_1\,\eps}{[m/q]},
\]
then the sub-annulus $\ha{S}_{q,q'}$ satisfies
\[
D_m^\psi(\ha{S}_{q,q'},\eps)\leq c_2\, q.
\]
\end{lem}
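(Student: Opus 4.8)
The plan is to reduce everything to the behavior of the one--dimensional flow $\psi$ on the annulus $\hjA=\T\times[0,1]$ near $r=0$, and then to count how many $\hd_m^\psi$--small boxes are needed to cover a thin sub--annulus $\ha{S}_{q,q'}$. The guiding principle is the same as in the proof of $\hp(\phi)\leq 2$: on an orbit $C_q$ with period $q$, and for $m$ iterates of the time--one map $\psi$, a single fundamental--domain box of $\psi$ gets ``spread out'' over at most $\lceil m/q\rceil$ of the boxes that already cover $\T\times\{0\}$ in the tameness domain $\KK$; since $q\in[q^*,m]$, we get roughly $q$ boxes along each orbit, times a uniformly bounded number of boxes in the transverse ($r$) direction, provided the annulus $\ha{S}_{q,q'}$ is thin enough that the $r$--variation does not by itself create a separation larger than $\eps$. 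This is exactly the role of the hypothesis $q'-q\leq c_1\eps/[m/q]$.

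Concretely, first I would fix, once and for all, a finite covering $(V_\alpha)_{\alpha\in\Aa}$ of the fundamental domain $\KK$ of the planar $p$--model $\psi$ (condition (C4)) by sets of $\hd$--diameter $<\eps/4$, with $\Card\Aa=:c_2'$ depending only on $\eps$. Each orbit $C_q$, $q\geq q^*$, meets $\KK$ in an arc, and by iterating the time--one map the orbit is covered by at most $\lceil q\rceil+1$ translates $\psi^j(V_\alpha)\cap C_q$. The key point, to be extracted from the tameness condition and condition (C1), is that for any two points $a,a'$ on the same orbit $C_q$ lying in a common $V_\alpha$, and for all $0\leq n\leq m$, the separation function satisfies $E_{a,a'}(n)\leq E_{a,a'}(n_0)<\eps/4$ for the time $n_0$ furnished by (C4) at which the maximum is attained inside $\KK$; hence each set $\psi^{-j}(V_\alpha\cap C_q)$ has $\hd_m^\psi$--diameter $<\eps/2$ in the $\th$--direction. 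This gives a covering of $C_q$ by at most $c_2' (\lceil q\rceil+1)$ sets of $\hd_m^\psi$--diameter $<\eps/2$, i.e.\ $D_m^\psi(C_q,\eps)\leq c_2\, q$ for a suitable $c_2=c_2(\eps)$.

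Next I would thicken these sets in the $r$--direction. By the torsion condition (C3) the period $T(r)$ is strictly decreasing, so $r\mapsto q=T(r)$ and its inverse $r(q)$ are well defined on $[q^*,+\infty[$, and $\ha{S}_{q,q'}=\{\,(\th,r): r(q)\leq r\leq r(q')\,\}$. For two points $(\th,r)$ and $(\th',r')$ with $r,r'\in[r(q'),r(q)]$, the iterates under $\psi$ drift apart in $\th$ at a rate controlled by the difference of the angular velocities, and integrating over $m$ steps the total $\th$--drift is bounded by $m\cdot\sup|\partial_r(\text{angular velocity})|\cdot|r-r'|$; since the minimal period along $\ha{S}_{q,q'}$ is $q$, the number of ``effective'' distinct iterates is $\lceil m/q\rceil$, so the relevant bound is of the form $C\,[m/q]\,|r-r'|\leq C\,[m/q]\,(r(q')-r(q))$. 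Using that $r(q)$ is Lipschitz on $[q^*,m]$ (here one invokes remark \ref{periode2}, choosing $q^*$ large enough so that $r'(q)$ is controlled) one has $r(q')-r(q)\leq C'(q'-q)$, and the hypothesis $q'-q\leq c_1\eps/[m/q]$ then makes this $\th$--drift $<\eps/2$ for a suitable choice of $c_1=c_1(\eps)$. Therefore the product of each of the $c_2\,q$ angular boxes above with the full thin band $[r(q'),r(q)]$ has $\hd_m^\psi$--diameter $<\eps$, and we conclude $D_m^\psi(\ha{S}_{q,q'},\eps)\leq c_2\, q$.

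The main obstacle is the uniformity over the whole range $q\in[q^*,m]$ and over all admissible pairs $(q,q')$ simultaneously, i.e.\ making sure the constants $c_1,c_2$ depend on $\eps$ only and not on $m,q,q'$. This requires: (i) the tameness condition to supply a time $n_0$ at which the separation is maximized and at which both points sit inside the fixed fundamental domain $\KK$, so that the covering $(V_\alpha)$ of $\KK$ controls the separation for \emph{all} later times — this is the reason (C4) is in the definition of a planar $p$--model; and (ii) a uniform Lipschitz bound for $r(q)$ and for the $r$--derivative of the angular velocity on the fixed compact $[0,1]$, which comes from the $C^1$ regularity of the data ($\lambda_k,\mu_k,\alpha$) together with remark \ref{periode2}. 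Once these uniformities are in place the counting is elementary, and the stated bound $D_m^\psi(\ha{S}_{q,q'},\eps)\leq c_2\,q$ follows.
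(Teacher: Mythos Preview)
Your treatment of the single orbit $C_q$ is fine and matches the paper: cover the interval $I_q=C_q\cap\KK$ by arcs of length $\eps/(2\lam)$ (with $\lam$ the Lipschitz constant of $\psi$ on $[-1,1]\times\KK$), push them forward by $\psi^0,\ldots,\psi^{[q]}$, and use the tameness condition (C4) to see that each such arc has $\hd_k$--diameter $\leq\eps/2$ for \emph{all} $k$. This gives $D_m^\psi(C_q,\eps/2)\leq c_2\,q$.

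The fattening step, however, has a genuine gap. Your bound ``$m\cdot\sup\abs{\partial_r(\text{angular velocity})}\cdot\abs{r-r'}$'' is not available: the vector field near $z_k$ is $V_k(\th,r)=\lam_k(r)\sqrt{(\th-\kp)^2+\mu_k(r)}$, and $\partial_r V_k$ contains the term $\lam_k(r)\mu_k'(r)/\big(2\sqrt{(\th-\kp)^2+\mu_k(r)}\big)$, which blows up as $(\th,r)\to(\kp,0)$. So there is no finite $\sup\abs{\partial_r V}$ to quote. The subsequent passage from $m$ to $[m/q]$ ``effective iterates'' is then left unjustified, and your appeal to Remark~\ref{periode2} is in the wrong direction: that remark gives a \emph{lower} bound $r'(q)\geq e^{-\bar c q}$, not the Lipschitz upper bound you need for $r(q')-r(q)\leq C'(q'-q)$.

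The paper bypasses all of this by never differentiating in $r$. For $a\in C_q$ and $a'\in C_{q'}$ on the same vertical, it introduces the \emph{time lag} $\De(a,t)=t-t'$, where $t'$ is defined by $x'(t')=x(t)$ in the universal cover. Two elementary facts do all the work: (i) $\De(a,q)=q-q'$ exactly, by periodicity; (ii) $\De$ is additive along the orbit, so $\De(a,kq)=k(q-q')$ and hence $\De(a,m)\leq([m/q]+1)(q-q')$. The positional drift is then bounded by $x'(t)-x(t)\leq \ell'\,\De(a,t)$ with $\ell'$ the (finite) maximum of the velocity field, giving directly
\[
0\leq x'(n)-x(n)\leq \ell'\big([m/q]+1\big)(q-q'),\qquad 0\leq n\leq m.
\]
This is precisely the inequality that turns the hypothesis $q'-q\leq c_1\eps/[m/q]$ (with $c_1=1/\ell'$) into a drift $<\eps/2$. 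Once you have it, the torsion condition lets you sandwich every rectangle $R_{ij}$ over an arc $I_{ij}\subset C_q$ between its two extreme corners, and the covering of $C_q$ fattens verbatim to a covering of $\ha S_{q,q'}$ by at most $c_2\,q$ sets of $\hd_m^\psi$--diameter $\leq\eps$. The moral: work with the period difference $q-q'$ through the time--lag function, not with the radial width $\abs{r-r'}$ through $\partial_r V$.
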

 
\begin{proof}  We  first construct a covering of a single curve $C_q$, then we fatten it a little bit to get a covering of a thin strip $S_{q,q'}$. We will use condition (C3). Recall that $\KK$ is the fundamental domain for the tameness condition.

Fix $q\in [q^*,m]$.
Let $\lam$ be the Lipschitz constant of $\Psi$ on the compact set  $[-1,1]\times\KK$. 
Let $I_q$ be the interval $C_q\cap \KK$. Consider two points $a\leq a'$ contained in 
$I_q$. 
Then by the tameness property the maximum $\mu$ of the separation function $E_{a,a'}$ is achieved for $t$ such that $\psi_t(a)$ and $\psi_t(a')$ are located inside $I_q$. 
Therefore there exists $t_0\in[-1,1]$ and $n\in \N$ such that $t=t_0+nq$. 
As a consequence, $\mu\leq \lam\, d(a,a')$. 

Hence, for all $k\in \N$,  $d_k(a,a')\leq\lam\, d(a,a')$.

For $q\geq q^*$ we set $j_q^*:=[\frac{\diam I_q}{\eps/(2\lam)}]+1$ and we cover $I_q$ by consecutive subintervals $J_1,\ldots,J_{j^*_q}$ of $d$--diameter $\eps/(2\lam)$.
 As $I_q,\psi(I_q),\ldots,\psi^{[q]}(I_q)$ is a covering of $C_q$, one sees that the intervals $I_{ij}=\psi^i(J_j)$, ${0\leq i\leq [q], 1\leq j\leq j^*_q}$ form a covering of $C_q$ by subsets of $d_k$--diameter $\leq \eps/2$, for {\em each} integer $k$.
 Indeed, if $a,a'$ lie in $I_{ij}\subset I_q$, $d_k(a,a')\leq \lam \eps/(2\lam)$.

Due to  the torsion property, for $q\leq q^*$, $j^*_q\geq j^*_{q^*}$. We set $c_2=2j^*_{q^*}$. Then, for each $q\in[q^*,m[$,  each orbit $C_q$ admits a covering by at most $c_2\,q$ subsets whose  $\hd_k$--diameter is smaller than $\eps/2$, for any positive integer $k$.
\vskip2mm

Fix now an integer $m\geq q^*$. 
Given the initial period $q$, we want to find a period $q'\leq q$ such that for any pair of points $a\in C_q$ and $a'\in C_{q'}$ {\em with the same abscissa 
$\th$}, the (maximal) difference of the abscissas of any pair of iterates $\psi^n(a)$ and $\psi^n(a')$, $n\in\{0,\ldots,m\}$, is at most $\eps/2$. 

Assume this is done and consider again the covering of $C_q$ by the intervals $I_{ij}$. 
Fix such an interval $I_{ij}:=[\th^-,\th^+]$ and let $R_{ij}$  be the rectangle  limited by the curves $C_q$ and $C_{q'}$ and the vertical lines $\th=\th^-$ and $\th=\th^+$.
Fix a lift to the universal covering $\R\times[0,1]$ and consider the associated the lifted flow $\wti\psi_t$. For $a\in R_{ij}$ and $t>0$, we set $(x(t),r)=\wti\psi_t(\ti a)$.
We set $a^-=(\th^-,r_q)$ and $a^+=(\th^+,r_{q'})$. By torsion property, one has  (with obvious notation):
\[
x^-(t)\leq x_1(t)<x_2(t) \leq x^+(t),\quad \forall\, a_1,a_2\in R_{ij}, \quad \forall t>0.
\]
Therefore, $\hd_m(a_1,a_2)\leq \hd_m(a^-,a^+)$ and the rectangles $R_{ij}$ have  $\hd_m$-diameter less than $\eps$. They form  a covering of the strip $\ha{S}_{q,q'}$  with at most $c_2\,q$ elements.

So it remains to choose $q'$ close enough to $q$.
Fix two points $a,a'$ located in the same vertical and denote by $\ti a$ and $\ti a'$ two lifts (located in the same vertical).
As before, we set $\ti\psi_s(\ti a)=(x(s),r)$, $\ti\psi_s(\ti a')=(x'(s),r')$, so  $r'\geq r$ since $q'\leq q$. 
Given $t\geq 0$, we denote by $t'$ be the time   needed for the point $a'$ to reach the vertical through $a(t)$. So $t'$ is defined by the equality 
\[
x'(t')=x(t).
\]
We set 
$
\De(a,t)=t-t'
$
so, by the torsion property,  $D(a,t)\geq0$. One easily checks that
\[
\De(a,t_1+t_2)=\De(a,t_1)+\De(\psi_{t_1}(a),t_2),\quad  \De(a,kt)=\sum_{\ell=0}^k\De(\psi_\ell(a),t).
\]
The first equality shows that $t\ma \De(a,.)$ is an increasing funtion. When $a\in C_q$, the second equality yields 
$\De(a,kq)=k\De(a,q)$.
It is also easy to see that
\[
\De(a,q)= q-q',\quad  \forall a\in C_q.
\]
Now set $\ell':=\max(\ell, \max_{\substack{(\th,r)\in\OO_k\\1\leq k\leq p}}V_k(\th,r))$, where is $\ell$ defined by Condition (C1). 
Then:
\[
0\leq x'(t)-x(t)\leq \ell'\, D(a,t).
\]
For $a\in C_q$, one has:
\[
\De(a,m)\leq  \De(a,([\frac{m}{q}]+1)q)=([\frac{m}{q}]+1)(q-q').
\]
Consequently, for $0\leq n\leq m$,
\[
0\leq x'(n)-x(n)\leq \ell'\, \De(a,n)\leq \ell' \, \De(a,m)\leq \ell' \,([\frac{m}{q}]+1)(q-q'). 
\]
which proves our statement for $c_1=\frac{1}{\ell'}$.
\end{proof}


We want to apply lemma \ref{lem:fatten} with $m=\ka N$ and $q\in\,]\frac{\ka N}{(k+1)},\frac{\ka N}{k}]$ for $1\leq k\leq k^*$.
Fix $k$ and assume that  $q\in\,]\frac{\ka N}{(k+1)},\frac{\ka N}{k}]$. Then
\[
\Big[\frac{\ka N}{q}\Big]=k.
\]
Therefore,  by lemma \ref{lem:fatten}, if $q'-q\leq c_1\eps/k$, the strip $\ha{S}_{q,q'}$ satisfies
\[
D_{\ka N}^\psi(\ha{S}_{q,q'},\eps)\leq c_2\, q\leq c_2 \, \frac{\ka N}{k}.
\]
This upper bound is therefore constant on $\ha{{\bf S}}_k$. Now the strip $\ha{{\bf S}}_k$ is covered
by the strips $(\ha{S}_{q_{i+1},q_i})_{0\leq i\leq i^*(k)}$, with
\[
q_i=\frac{\ka N}{k+1}+ i \frac{c_1\eps}{k},\qquad i^*(k)=\Big[\frac{\ka\, N}{c_1\,\eps\,(k+1)}\Big]+1\leq c_3\frac{\ka\, N}{c_1\,\eps\,k}
\]
for $c_3>0$ large enough.
So we can choose $\CC_k(N,\eps)$ such that
\[
\Card\CC_k(N,\eps)\leq D_{N}^\psi(\ha{{\bf S}}_k,\eps)\leq c_2 \, \frac{\ka N}{k}\,i^*(k)\leq  c_\eps\frac{N^2}{k^2},\qquad c_\eps=\frac{c_2c_3\ka^2}{c_1\eps}.
\]

Now let us estimate the maximal width $\Delta r$ of the substrips $(\ha{S}_{q_{i+1},q_i})_{0\leq i\leq i^*(k)}$ in the $r$ variable. Ther width in the $q$ variable is 
$c_1\eps/k$ and their are contained in the strip $\ha{{\bf S}}_k$, whose minimal period is $\ka N/(k+1)$. Therefore, according to the estimate
on $r'(q)$ if $q^*$ is assumed to be large enough (which, as observed above is always possible):
\[
\Delta r \leq  \frac{c_1\eps}{k} \,e^{-\bar c \frac{\ka N}{k+1}}\leq \frac{q^*c_1\eps}{\ka N} e^{-\bar c q^*/2}.
\]
For $q^*$ large enough, this width satisfies
\[
\Delta r \max_{[0,1]}|\al'(r)|\leq \frac{\eps}{2N}.
\]
Therefore $N$ iterations of two points $(\varphi,x)$ and $(\varphi',x')$ with $x$ and $x'$ in the same domain of the covering $\CC_k(N,\eps)$ 
produce a distorsion of at most $\eps/2$ in the $\varphi$-direction. As a consequence, one gets a covering by subsets of $d_N$-diameter
less than $\eps$ by taking the products of the elements of $\CC_k(N,\eps)$ by intervals of uniform length $\eps/2$ in the $\varphi$-direction.

Finally, since the strips ${\bf S}_k$ cover $\jA^*_N$, one gets
\[
D_{N}^\phi(\jA^*_N,\eps)\leq\frac{2}{\eps}\sum_{k=1}^{k^*}D_{N}^\psi(\ha{{\bf S}}_k,\eps)\leq \frac{2}{\eps}\sum_{k=1}^\infty c_\eps\frac{N^2}{k^2}=\al_\eps N^2,
\]
with $\al_\eps=\frac{2}{\eps}c_\eps\zeta(2)$. 
\vspace{2mm}

\noindent  \textbf{Conclusion:}  
Gathering  steps 2 and 3, one has:             
\[
D_N^\phi(\jA,\eps)\leq D_N^\phi(\jA_N^*,\eps)+D_N^\phi(\jA_N,\eps)\leq \ti\al_\eps N^2+\frac{2}{\eps}(i^*+Ni^{**}+p),
\]
for any $N\geq \max(N_0,N_1,N_2,N_3,N_4,N_5)$. 
Therefore $\hp(\phi)\geq 2$.
 \end{proof}

\bibliographystyle{alpha}
\bibliography{bibliohpol}

\end{document}